\documentclass{birkjour}
\usepackage{xcolor}
\usepackage{epsfig}
\usepackage[active]{srcltx} 
\usepackage{pdfsync}
\usepackage{manfnt}
\usepackage{hyperref}

%

\newtheorem{thm}{Theorem}[section]
 \newtheorem{cor}[thm]{Corollary}
 \newtheorem{lem}[thm]{Lemma}
 \newtheorem{prop}[thm]{Proposition}
 \theoremstyle{definition}
 \newtheorem{defn}[thm]{Definition}
 \theoremstyle{remark}
 \newtheorem{rem}[thm]{Remark}
 \newtheorem*{ex}{Example}
 \numberwithin{equation}{section}



\def\L2{L^{2}}

\def\A{\mathcal{A}}
\def\B{\mathcal{B}}

\def\E{\mathcal{E}}
\def\B{\mathcal{B}}
\def\D{\mathcal{D}}

\def\N{\Bbb{N}}
\def\R{\mathbb{R}}
\def\C{\Bbb{C}}

\def\m1{^{-1}}

\def\F{\mathcal{F}}

\def\L{\mathcal{L}}
\def\RR{\mathcal{R}}
\def\o{\omega}


\begin{document}

\title[]{KMS Dirichlet forms, coercivity and superbounded Markovian semigroups on von Neumann algebras}%
\author{Fabio E.G. Cipriani}%
\address{Dipartimento di Matematica, Politecnico di Milano, piazza Leonardo da Vinci 32, 20133 Milano, Italy.}
\email{fabio.cipriani@polimi.it}
\author{Boguslaw Zegarlinski}%
\address{Institut de Mathematiques de Toulouse, CNRS UMR5219, 118, route de Narbonne F-31062 Toulouse Cedex 9,  France.}
\email{boguslaw.zegarlinski@math.univ-toulouse.fr}

\thanks{This work has been supported by Laboratorio Ypatia delle Scienze Matematiche I.N.D.A.M. Italy (LYSM) and G.N.A.M.P.A.-I.N.D.A.M. Italy 2023.}
\subjclass{
47C15, 
46L57 
47D06, 
47L90, 
7N60, 
46L51, 
}
\keywords{Non-commutative Dirichlet Form, Superbounded Markov semigroup, Domination of forms and semigroups, Derivation, KMS state, spectrum growth rate, Noncommutative $\mathbb{L}_p$ spaces, quantum Ornstein-Uhlenbeck semigroup}
\date{March 10, 2023, first revisionon October 23, 2023, second revision 30 December 2023}

\begin{abstract}
We introduce a construction of Dirichlet forms on von Neumann algebras $M$ associated to any eigenvalue of the Araki modular Hamiltonian of a faithful normal non tracial state, providing also conditions by which the associated Markovian semigroups are GNS symmetric. The structure of these Dirichlet forms is described in terms of spatial derivations. Coercivity bounds are proved and the spectral growth is derived. We introduce a regularizing property of positivity preserving semigroups (superboundedness) stronger than hypercontractivity, in terms of the symmetric embedding of $M$ into its standard space $L^2(M)$ and the associated noncommutative $L^p(M)$ spaces. We prove superboundedness for a special class of positivity preserving  semigroups and that some of them are dominated by the Markovian semigroups associated to the Dirichlet forms introduced above, for type I factors $M$. These tools are applied to a general construction of the quantum Ornstein-Uhlembeck semigroups of the Canonical Commutation Relations CCR and some of their non-perturbative deformations.
\end{abstract}
\maketitle
 ----------------------------------------------------------------------------------------------------------------
\section{Introduction and description of the results.}

The structure of completely Dirichlet forms with respect to lower semicontinuous, faithful traces on von Neumann algebras is well understood in terms of closable derivations taking values in Hilbert bimodules (see \cite{cipsau} and the recent \cite{ver}, \cite{Wirth}). However, for applications to Quantum Statistical Mechanics (see \cite{bkp1, bkp2, bra1, bra2, c3, c4, moz, mz1, m1, m3, p1, p2, p3, z}) and Quantum Probability (see \cite{cfl, m2})
or to deal with general Compact Quantum Groups, is unavoidable to consider quadratic forms which are Markovian with respect to non tracial states or weights. Concerning the structure of Dirichlet forms of GNS-symmetric Markovian semigroups, one is invited to consult the recent \cite{ver}, \cite{Wirth}. \\
In QSM, for example, the relevant states one wishes to consider are the KMS equilibria of time evolution automorphisms which are non tracial at finite temperature. In the CQGs situation, on the other hand, the Haar state is a trace only for the special subclass of CQGs of Kac type. In several most studied CQGs the Haar state is not a tracial state, as for examples for the special
unitary CQGs $SU_q(N)$. In this framework a detailed understanding has been found for the completely Dirichlet forms generating translation invariant completely Markovian semigroups of Levy quantum stochastic processes. The construction relies on the Sch\"urmann cocycle associated to the generating functional of the process (see \cite{cfk}).\\
On the other hand, a general construction of completely Dirichlet forms on the standard form of a $\sigma$-finite von Neumann algebra with respect
to a faithful, normal state in the sense of \cite{c1, c2, gl1, gl2, gl3}, has been introduced in \cite{sqv,mz1,mz2} and by Y.M. Park and his school (see \cite{p1,p2,p3, bkp1,bkp2}) with applications to QSM of bosons and fermions system and their quasi-free states. In this approach the Dirichlet forms depend upon the explicitly knowledge of the modular automorphisms group of the state.
\vskip0.1truecm
In this work we formulate a general and natural construction of a completely Dirichlet form, Markovian with respect to a fixed normal, faithful state $\o_0$, associated to each non zero and not necessarily discrete eigenvalue of the Araki modular Hamiltonian $\ln\Delta_0$. Hence, by superposition, one has a malleable tool to construct completely Dirichlet forms and completely
Markovian, modular symmetric, semigroups starting from the spectrum of the modular operator $\Delta_0$ or its associated Araki modular Hamiltonian $\ln\Delta_0$. Compared to Park's approach, this has the advantage to avoid the explicit use of the modular automorphism group. The present method generalizes the construction of bounded Dirichlet form of \cite{c1} Proposition 5.3 and that of unbounded Dirichlet forms of \cite{c1} Proposition 5.4, removing the assumption of self-adjointness and affiliation to the centralizer for the coefficients.\\
The framework of the construction is that of Dirichlet forms and Markovian semigroups on standard forms $(M, L^2(M), L^2_+(M), J)$ of von Neumann algebras $M$ as in \cite{c1} and related modular theory (\cite{ara1,ara2,bra1,tak,sk,sz}). In particular, we associate in Section 2, a one-parameter family of unbounded, $J$-real, non negative, densely defined, closed quadratic forms $(\E_Y^\lambda,\F_Y^\lambda)$ on $L^2(M)$ satisfying the first Beurling-Deny condition
to each densely defined, closed operator $(Y,D(Y))$ affiliated to $M$, thus generating $C_0$-continuous, contractive semigroups on $L^2(M)$ which are positivity preserving (in the sense that they leave globally invariant the positive self-polar cone $L^2_+(M)$). Moreover, the quadratic form
$(\E_Y^\lambda,\F_Y^\lambda)$ is Markovian with respect to the cyclic vector $\xi_0\in L^2(M)_+$ representing $\o_0$, in the strong sense that $\E_Y^\lambda[\xi_0]=0$, if and only if $\xi_0$ lies in the domain both of $Y$ and its adjoint $Y^*$ and $\xi:=Y\xi_0$ is an eigenvector of the modular operator $\Delta_0$ associated to the non zero eigenvalue $\lambda>0$. This construction applies, in particular, to any eigenvector $\xi$ of any non zero eigenvalue of $\Delta_0$.\\
Further, we investigate the fact that, by definitions, each $(\E_Y^\lambda,\F_Y^\lambda)$ is the quadratic form of an $M$-bimodule derivations $(d_Y^\lambda, D(d_Y^\lambda))$ on the standard bimodule $L^2(M)$. In particular we show that in the Markovian case both $(\E_Y^\lambda,\F_Y^\lambda)$ and $(d_Y^\lambda, D(d_Y^\lambda))$ are represented by the symmetric embedding on $L^2(M)$ of the unbounded, spatial derivations $\delta_Y:=i[Y,\cdot]$ on $M$ provided by the operator $(Y,D(Y))$ affiliated to $M$.\\
In the subsequent Section 3, we prove natural lower bounds for the Dirichlet form $(\E_Y^\lambda,\F_Y^\lambda)$ in terms of the quadratic forms of the affiliated operators $Y^*Y$, $YY^*$, $[Y,Y^*]$ and derive implications on the lower boundedness and discreteness of spectrum of $(\E_Y^\lambda,\F_Y^\lambda)$.\\
\noindent
By the general theory, using the symmetric embeddings of the von Neumann algebra $M$ into the standard Hilbert space $L^2(M)$ and the embedding of
$L^2(M)$ into the predual space $M_*=L^1(M)$, provided by the modular theory of the state $\o$, completely Markovian semigroups $T_t$ on $L^2(M)$ extend to completely (Markovian) contractive semigroups on $M$ and on $L^1(M)$ (weak$^*$-continuous in the former case and strongly continuous
in the latter one).\\
In Section 4, we introduce an extra regularity property of positivity preserving semigroups called {\it superboundedness} as the boundedness of $T_t$ from $L^2(M)$ to $M$ for all $t>t_0$ and some $t_0\ge 0$. In case
$t_0=0$ we call this property {\it ultraboundedness}. We prove that superboundedness holds true with respect to a finite temperature Gibbs state
$\o(\cdot)={\rm Tr\,}(\cdot e^{-\beta_0 H_0})/{\rm Tr }(e^{-\beta_0 H_0})$ on a type I$_{\infty}$ factor $M$, for the semigroup generated by the generalized sum $H_0\dot+JH_0J$ and that the property is stable with respect to domination of positivity preserving semigroups.\\ \noindent
In Section 5 we apply the framework above to investigate a class of Dirichlet forms associated on a type I$_{\infty}$ factor which are Markovian with respect to a Gibbs state of the Number Operator of a representation of the CCR algebra. The construction fully generalizes that of Quantum Ornstein-Uhlenbeck semigroups introduced in \cite{cfl}. In particular we prove the subexponential spectral growth rate of the generator and the domination of the Markovian semigroup with respect to the semigroup generated by $H_0\dot+JH_0J$ (this special class of semigroups is discussed in Appendix 7.1).\\
\noindent
In Section 6 we apply the tools developed in the previous sections to construct Dirichlet forms associated with dynamics generated by deformations of the Number Operator.\\
\noindent
In Appendix we represent the generators of a class of positivity preserving semigroups as generalized sums and we clarify {\it superboundedness} for abelian von Neumann algebras.

\section{Dirichlet forms and derivations on von Neumann algebras standard forms}
Let $(M,L^2(M),L^2_+(M),J)$ be a standard form of a $\sigma$-finite von Neumann algebra (for this subject and the related modular theory we refer to \cite{bra1,bra2,tak,sz}).\\
Let $\o_0$ be the faithful normal state on $M$ represented by the cyclic vector $\xi_0\in L^2_+(M)$ as
\[
\o_0(x)=(\xi_0|x\xi_0)_{L^2(M)}\qquad x\in M.
\]
The anti-linear, densely defined operator on $L^2(M)$ defined on the left Hilbert algebra by
\[
M\xi_0\ni x\xi_0\mapsto x^*\xi_0\qquad x\in M,
\]
is closable. Its closure $S_0$ has a polar decomposition $S_0=J\Delta_0^{1/2}$ where the anti-unitary part $J$ is called the modular conjugation
and $\Delta_0:=S_0^*S_0$ is a densely defined, self-adjoint, positive operator on $L^2(M)$, called the modular operator of $\o_0$, defining the modular automorphism group of $M$ by
$\sigma^{\o_0}_t(x):=\Delta_0^{it}x\Delta_0^{-it}$ for $x\in M$ and $t\in \R$. On the w$^*$-dense, involutive, sub-algebra of its analytic elements $M_0\subseteq M$, the modular group can be extended to any $t\in\mathbb{C}$. For any $x,y\in M_0$ and $z,w\in\C$, this extension satisfies
\[
\sigma^{\o_0}_z(xy)=\sigma^{\o_0}_z(x)\sigma^{\o_0}_z(y),\quad \sigma^{\o_0}_{z+w}(x)= \sigma^{\o_0}_z(\sigma^{\o_0}_w(x)),\quad (\sigma^{\o_0}_z(x))^*=\sigma^{\o_0}_{\bar z}(x^*).
\]
We will make use of the symmetric embedding of $M$ into its standard Hilbert space $L^2(M)$:
\[
i_0:M\to L^2(M)\qquad i_0(x):=\Delta_0^{1/4}x\xi_0.
\]
Among its properties we recall that it is weak$^*$-continuous, injective with dense range and positivity preserving in the sense that $i_0(x)\in
L^2(M)_+$ if and only if  $x\in M_+$. Also it maps the closed and convex set of all $x\in M_+$ such that $0\le x\le 1$ onto the closed and convex set of all $\xi\in L^2_+(M)$ such that $0\le\xi\le\xi_0$. The projection of a $J$-real vector $\xi=J\xi\in L^2(M)$ onto the closed, convex set $\xi_0-L^2_+(M)$ wil be denoted by $\xi\wedge\xi_0$.
\vskip0.2truecm\noindent
A {\it Dirichlet form} \cite{c1} Definition 4.8 with respect to $(M,\o_0)$ is a lower bounded and lower semicontinuous quadratic form
\[
\E:L^2(M)\to (-\infty,+\infty],
\]
with domain $\F:=\{\xi\in L^2(M): \E[\xi]<+\infty\}$, satisfying the properties
\vskip0.1truecm\noindent
i) $\F$ is dense in $L^2(M)$,\vskip0.2truecm\noindent
ii) $\E[J\xi]=\E[\xi]$ for all $\xi\in L^2(M)$ (reality),\vskip0.2truecm\noindent
iii) $\E[\xi\wedge\xi_0]\le\E[\xi]$ for all $\xi=J\xi\in L^2(M)$, (Markovianity).
\vskip0.2truecm\noindent
$(\E,\F)$ is said to be a {\it completely Dirichlet form} if its ampliation on the algebra\\
$(M\otimes M_n(\mathbb{C}),\o_0\otimes {\rm tr}_n)$ defined by
\[
\E^n:L^2(M\otimes M_n(\mathbb{C}),\o_0\otimes {\rm tr}_n)\to [0,+\infty]\qquad \E^n[[\xi_{i,j}]_{i,j=1}^n]:=\sum_{i,j=1}^n\E[\xi_{i,j}]
\]
is a Dirichlet form for all $n\ge1$ (${\rm tr}_n$ denotes the tracial state
on the matrix algebra $M_n(\mathbb{C})$).\\
\vskip0.1truecm
A $C_0$-continuous, self-adjoint semigroup $\{T_t:t\ge0\}$ on $L^2(M)$ is called
\vskip0.1truecm\noindent
i) {\it positivity preserving} if $T_t\xi\in L^2_+(M)$ for all $\xi\in L^2_+(M)$ and $t\ge 0$;
\vskip0.1truecm\noindent
ii) {\it Markovian with respect to $\o_0$} if it is positivity preserving
and for $\xi=J\xi\in L^2(M)$
\[
0\le \xi\le \xi_0\quad \implies \quad 0\le T_t \xi\le \xi_0\qquad t\ge 0;
\]
iii) {\it completely positive (resp. Markovian)} if the extensions $T^n_t:=T_t\otimes I_n$ to $L^2(M\otimes M_n(\mathbb{C}),\o_0\otimes {\rm tr}_n)$ are positivity preserving (resp. Markovian) semigroups for all $n\ge 1$.\\
In \cite{c1} Definition 2.8, property ii) above, Markovianity, was indicated as sub-Markovianity.
\vskip0.2truecm\noindent
As a result of the general theory, Dirichlet forms are automatically nonnegative and Markovian semigroups are automatically contractive see \cite{c1} Proposition 4.10 and Theorem 4.11.\\
Dirichlet forms $(\E,\F)$ are in one-to-one correspondence with Markovian
semigroups $\{T_t:t\ge 0\}$: the self-adjoint, positive operator $(H,D(H))$ associated to $(\E,\F)$ by $\E[\xi]=\|\sqrt{H}\xi\|^2_{L^2(M)}$ for all $\xi\in \F$, being the semigroup generator $T_t=e^{-tH}$, $t\ge 0$.\\
$C_0$-continuous, self-adjoint, positivity preserving semigroups are in one-to-one correspondence with nonnegative, densely defined, real, lower semicontinuous quadratic forms satisfying the following {\it first Beurling-Deny condition} (weaker than Markovianity)
\[
\xi=J\xi\in\F\quad \Rightarrow\quad \xi_\pm\in\F\qquad\text{and}\qquad \E(\xi_+|\xi_-)\le 0,
\]
equivalently stated (see \cite{c1} Proposition 4.5 and Theorem 4.7]) as
\[
\xi=J\xi\in\F\quad \Rightarrow\quad |\xi|\in\F\qquad\text{and}\qquad \E[|\xi|]\le \E[\xi],
\]
On the other hand, the first Beurling-Deny condition and the {\it conservativeness} condition
\[
\xi_0\in\F,\qquad \E[\xi_0]=0
\]
together imply the Markovianity of closed forms $(\E,\F)$ (see \cite{c1} Lemma 2.9 and Theorem 4.11).

\subsection{Dirichlet forms associated to eigenvalues of the modular operators}
The forthcoming construction of Dirichlet forms is based on the following well known fact (see \cite{bra1} Proposition 2.5.9, \cite{sk,tak} page 19; see also \cite{ara2} where von Neumann algebras with states having the logarithmic of the modular operators with spectrum consisting only of isolated eigenvalues are characterized).
\vskip0.2truecm\noindent
We recall that a densely defined, closed operator $(Y,D(Y))$ on $L^2(M)$ is affiliated to $M$ if for any $z'\in M'$ and any $\xi\in D(Y)$ one has $z'D(Y)\subseteq D(Y)$ and $Y(z'\xi)=z'(Y\xi)$ or, equivalently, if and only if its graph $\mathcal{G}(Y)\subset L^2(M)\oplus L^2(M)$ is left globally invariant $(z'\oplus z')\mathcal{G}(Y)\subseteq \mathcal{G}(Y)$ under the action of $z'\oplus z'\in M'\oplus M'$, for any $z'\in M'$ (see \cite{sz}).\\
For any operator $(Y,D(Y))$ affiliated to $M$, the operator $j(Y):=JYJ$ is affiliated to $M'$.
\begin{lem}
For any $\xi\in D(S_0)=D(\Delta_0^{1/2})$ there exists a densely defined, closed operator $(Y,D(Y))$ affiliated to $M$ such that
\vskip0.2truecm\noindent
i) $\xi_0\in D(Y)\cap D(Y^*)$,
\vskip0.2truecm\noindent
ii) $\xi=Y\xi_0$ and $S_0(\xi)=Y^*\xi_0$.
\vskip0.2truecm\noindent
iii) Among the operators $(Y,D(Y))$ with the properties i) and ii) above,
there exists a minimal one $({\overline{Y_0}}, D({\overline{Y_0}}))$ obtained as the closure of the closable operator $(Y_0,D(Y_0))$ defined by
\[
D(Y_0):=M'\xi_0,\qquad Y_0(y'\xi_0):=y'\xi.
\]
\end{lem}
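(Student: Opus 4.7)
The plan is to prove existence, closability, and affiliation by explicitly constructing the minimal candidate $(Y_0, D(Y_0))$ of item iii), and then showing that any operator satisfying i) and ii) must extend it. First, take the definition in iii) as the starting point: set $D(Y_0) := M'\xi_0$ and $Y_0(y'\xi_0) := y'\xi$ for $y' \in M'$. This is well defined because $\xi_0$ is cyclic for $M$ and hence separating for $M'$: if $y'\xi_0 = 0$ then $y' = 0$, so $y'\xi = 0$. Density of $D(Y_0)$ follows from $\xi_0$ being cyclic for $M'$.

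The central step is closability, which I would establish by producing a densely defined adjoint. The natural ansatz is the operator $(Z_0, D(Z_0))$ on $D(Z_0) := M'\xi_0$ given by $Z_0(y'\xi_0) := y' S_0\xi$, which is well defined since $\xi \in D(S_0)$. The key identity to verify is $Z_0 \subseteq Y_0^*$, namely
\[
(Y_0(y'\xi_0)\,|\,z'\xi_0) = (y'\xi_0\,|\,Z_0(z'\xi_0))\qquad y',z'\in M'.
\]
Because $M\xi_0$ is a core for $S_0$, one picks $x_n \in M$ with $x_n\xi_0 \to \xi$ and $x_n^*\xi_0 \to S_0\xi$. Using boundedness of $y', z'$ and the commutation $x_n y' = y' x_n$, $x_n^* z' = z' x_n^*$ between $M$ and $M'$,
\[
(y'\xi\,|\,z'\xi_0) = \lim_n (x_n y'\xi_0\,|\,z'\xi_0) = \lim_n (y'\xi_0\,|\,z' x_n^*\xi_0) = (y'\xi_0\,|\,z' S_0\xi),
\]
which is the required identity. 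Hence $Y_0^*$ is densely defined, $Y_0$ is closable, and I set $Y := \overline{Y_0}$.

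The remaining properties follow by inspection. Affiliation to $M$ holds because $u'D(Y_0) = D(Y_0)$ and $Y_0 u' = u' Y_0$ on $D(Y_0)$ for every unitary $u' \in M'$, and this passes to the closure. Taking $y'=1$ gives $\xi_0 \in D(Y_0) \subseteq D(Y)$ with $Y\xi_0 = \xi$, and likewise $\xi_0 \in D(Z_0) \subseteq D(Y^*)$ with $Y^*\xi_0 = S_0\xi$, proving i) and ii) for this minimal $Y$. For the minimality in iii), if $(Y', D(Y'))$ is any closed affiliated operator satisfying i) and ii), then affiliation forces $y'\xi_0 \in D(Y')$ and $Y'(y'\xi_0) = y'Y'\xi_0 = y'\xi = Y_0(y'\xi_0)$ for every $y' \in M'$, so $Y_0 \subseteq Y'$ and hence $\overline{Y_0} \subseteq Y'$ by closedness of $Y'$.

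The main obstacle I expect is the closability argument: it hinges on choosing the right approximating sequence dictated by the definition of $S_0$ and on using both sides of the $M$--$M'$ commutation to shift $x_n$ from the $y'$-side to the $z'$-side of the inner product. Once that adjoint identity is in hand, everything else reduces to bookkeeping.
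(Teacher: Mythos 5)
Your proof is correct and follows the paper's overall plan: define $Y_0$ and a candidate adjoint $Z_0$ on $M'\xi_0$, establish the duality $Z_0\subseteq Y_0^*$ to get closability, then check affiliation and minimality by commutation with $M'$. The one place you diverge is the verification of the duality identity $(Y_0(y'\xi_0)\,|\,z'\xi_0)=(y'\xi_0\,|\,Z_0(z'\xi_0))$. The paper proves it by a direct chain of modular-theory manipulations, rewriting $y'^*z'\xi_0=J\Delta_0^{-1/2}z'^*y'\xi_0$ (the commutant involution $F_0=J\Delta_0^{-1/2}$ applied to $z'^*y'\xi_0$) and then pushing $\Delta_0^{-1/2}$ across the inner product to arrive at $z'S_0\xi$. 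You instead approximate $\xi$ in the graph norm of $S_0$ by $x_n\xi_0$ with $x_n\in M$ (valid since $M\xi_0$ is a core for $S_0$) and shuttle $x_n$ from the $y'$-side to the $z'$-side using $M$--$M'$ commutation before passing to the limit. Your route is slightly more elementary — it only invokes the definition of $S_0$ as a closure, without manipulating fractional powers of $\Delta_0$ or the commutant involution — while the paper's route is shorter once one is fluent in the $J$, $\Delta_0^{\pm1/2}$, $F_0$ calculus. Both are sound, and the affiliation and minimality arguments are the same in both; your explicit remark that the commutation with unitaries $u'\in M'$ passes to the closure is a useful clarification that the paper leaves implicit.
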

\begin{proof}
The operator $(Y_0,D(Y_0))$ is affiliated to $M$ because the action of any $w'\in M'$ leaves globally invariant the domain $M'\xi_0$ and $w'Y_0(y'\xi_0)=w'y'\xi=Y_0(w'y'\xi_0)$ for any $y'\in M'$. The operator $(Y_0,D(Y_0))$ is closable because it is in duality with the densely defined operator $Z_0:M'\xi_0\to L^2(M)$ given by $Z_0(z'\xi_0):=z'S_0\xi$ in the sense that\\
\[
\begin{split}
(z'\xi_0|Y_0(y'\xi_0))&=(z'\xi_0|y'\xi)=(y'^*z'\xi_0|\xi)=(J\Delta_0^{-1/2}z'^*y'\xi_0|\xi)\\
&=(J\xi|\Delta_0^{-1/2}z'^*y'\xi_0)=(z'J\Delta_0^{1/2}\xi|y'\xi_0)=(z'S_0\xi|y'\xi_0)\\
&=(Z_0(z'\xi_0)|y'\xi_0).
\end{split}
\]
Clearly by definition $Y_0\xi_0=\xi$ and the calculation above implies $Y_0^*\xi_0=S_0\xi$. If $(Y,D(Y))$ is a closed operator affiliated to $M$ with properties i) and ii) above, then, as $\xi_0\in D(Y)$, we have $y'\xi_0\in D(Y)$ for all $y'\in M'$ so that $M'\xi_0\subseteq D(Y)$ and $Y(y'\xi_0)=y'Y\xi_0=y'\xi=Y_0(y'\xi_0)$, which shows that $(Y,D(Y))$ is a closed extension of $({\overline{Y_0}}, D({\overline{Y_0}}))$.
\end{proof}
\par\noindent
This representation will be applied below to eigenvectors $\xi$ (if any) of the modular operator.

\begin{lem}
Let $(Y,D(Y))$ be a densely defined, closed operator affiliated to $M$ and $\mu,\nu\ge 0$. Then defining $d_Y^{\mu,\nu}:D(d_Y^{\mu,\nu})\to L^2(M)$ as
\[
d_Y^{\mu,\nu}:=i(\mu Y-\nu j(Y^*))\qquad D(d_Y^{\mu,\nu}):=D(Y)\cap JD(Y^*),
\]
it results that $(d_Y^{\mu,\nu},D(d_Y^{\mu,\nu}))$ is a densely defined, closable operator on $L^2(M)$.
\end{lem}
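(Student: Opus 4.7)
The plan is to verify two properties separately: density of $D(d_Y^{\mu,\nu}) = D(Y) \cap JD(Y^*)$ in $L^2(M)$, and closability of the operator $d_Y^{\mu,\nu} = i(\mu Y - \nu j(Y^*))$.

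For density, I would exploit the fact that $Y$ is affiliated to $M$ while $j(Y^*) = JY^*J$ is affiliated to $M' = JMJ$, so that suitable spectral cutoffs of each commute with those of the other. Let $p_n := \chi_{[0,n]}(|Y|)$ and $q_m := \chi_{[0,m]}(|Y^*|)$ be the spectral projections. Then $p_n, q_m \in M$, both nets tend strongly to $I$, and using the polar decompositions $Y = u|Y|$, $Y^* = v|Y^*|$ one has $p_n L^2(M) \subseteq D(|Y|) = D(Y)$ and $q_m L^2(M) \subseteq D(|Y^*|) = D(Y^*)$. Since $Jq_mJ \in M'$ commutes with $p_n \in M$, for each $\xi \in L^2(M)$ one may set
\[
\eta_{n,m} := p_n Jq_m J\xi = Jq_m J\, p_n \xi.
\]
The first expression shows $\eta_{n,m} \in D(Y)$, while the second gives $J\eta_{n,m} = q_m J p_n \xi \in D(Y^*)$, so $\eta_{n,m} \in D(Y) \cap JD(Y^*)$. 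Since $\eta_{n,m} \to \xi$ strongly as $n,m \to \infty$, the domain is dense.

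For closability it suffices to show that $(d_Y^{\mu,\nu})^*$ is densely defined. Using the identity $(JAJ)^* = JA^*J$, which follows from the anti-unitarity of $J$, applied to $A = Y^*$, together with the general inclusion $A^* + B^* \subseteq (A+B)^*$ on the intersection of adjoint domains, I would obtain
\[
(d_Y^{\mu,\nu})^* \supseteq -i\mu Y^* + i\nu j(Y) = -\, d_{Y^*}^{\mu,\nu},
\]
defined on $D(Y^*) \cap JD(Y)$. This latter domain is dense by repeating the spectral cutoff argument of the first step with the roles of $Y$ and $Y^*$ interchanged. Hence $(d_Y^{\mu,\nu})^*$ is densely defined and $d_Y^{\mu,\nu}$ is closable.

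The main technical point I anticipate is verifying that the spectral projections of $|Y|$ and $|Y^*|$ actually map into $D(Y)$ and $D(Y^*)$ (not merely into $D(|Y|)$ and $D(|Y^*|)$), and carefully handling the anti-linearity of $J$ when passing to adjoints; both are standard consequences of polar decomposition and modular theory, so no new ideas are required beyond the commutation between $M$ and $M'$.
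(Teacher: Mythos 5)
Your proof is correct and takes essentially the same route as the paper: both establish density of $D(Y)\cap JD(Y^*)$ by exploiting that bounded regularizations of $Y$ (affiliated to $M$) and of $j(Y^*)$ (affiliated to $M'$) commute — the paper using the semigroups $e^{-t|Y|}$, $e^{-t|j(Y^*)|}$ and you using the spectral cutoffs $\chi_{[0,n]}(|Y|)$, $J\chi_{[0,m]}(|Y^*|)J$, a purely cosmetic difference — and both establish closability by exhibiting $-i(\mu Y^*-\nu j(Y))$, densely defined on $D(Y^*)\cap JD(Y)$, inside the adjoint. The one technical point you flagged, that $p_n L^2(M)\subseteq D(Y)$ rather than merely $D(|Y|)$, is handled exactly as you say by $D(Y)=D(|Y|)$ from the polar decomposition, so no gap remains.
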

\begin{proof}
Since $J^2=I$, we have $D(j(Y^*))=JD(Y^*)$ so that $d_Y^{\mu,\nu}$ is
well defined on $D(d_Y^{\mu,\nu})$. By hypotheses, $j(Y^*)$ is densely defined, closed and affiliated to the commutant von Neumann algebra $M'$. Hence $Y$ and $j(Y^*)$ strongly commute and the contraction semigroup $e^{-t|Y|}\circ e^{-t|j(Y^*)|}=e^{-t|j(Y^*)|}\circ e^{-t|Y|}$ with parameter $t\ge 0$ strongly converges to the identity operator on $L^2(M)$ as $t\to 0^+$. Since $d_Y^{\mu,\nu}\circ e^{-t|Y|}\circ e^{-t|j(Y^*)|}=i(\mu Y\circ e^{-t|Y|}\circ e^{-t|j(Y^*)|}-\nu e^{-t|Y|} \circ j(Y^*)\circ e^{-t|j(Y^*)|})$ is a bounded operator for any $t>0$, we have that $D(d_Y^{\mu,\nu})$ is dense in $L^2(M)$.\\
To prove the statement concerning closability, observe that reasoning as above with $Y^*$ in place of $Y$ and $Y^{**}=Y$ in place of $Y^*$, we have that $\mu Y^*-\nu j(Y)$ is densely defined on $D(Y^*)\cap JD(Y)$. Moreover, since
\[
\begin{split}
(d_Y^{\mu,\nu}\eta|\zeta)&=(i(\mu Y-\nu j(Y^*))\eta|\zeta)\\
&=-i\mu (Y\eta|\zeta) +i\nu( j(Y^*)\eta|\zeta)\qquad \eta\in D(d_Y^{\mu,\nu}):=D(Y)\cap JD(Y^*)\\
&=-i\mu (\eta|Y^*\zeta) +i\nu(\eta| j(Y)\zeta)\qquad \zeta\in D(Y^*)\cap JD(Y)\\
&=(\eta|-i(\mu Y^*-\nu j(Y))\zeta),\\
\end{split}
\]
the adjoint of $(d_Y^{\mu,\nu}, D(d_Y^{\mu,\nu}))$ is an extension of $(-i(\mu Y^*-\nu j(Y)), D(Y^*)\cap JD(Y))$. It is thus densely defined and consequently
$(d_Y^{\mu,\nu}, D(d_Y^{\mu,\nu}))$ is closable.
\end{proof}
\begin{lem}
Let $(Y,D(Y))$ be a densely defined, closed operator affiliated to $M$. Then the $J$-real part of the domain $D(Y)$ is invariant under the modulus
map:
\[
\xi\in D(Y)\, ,\quad J\xi=\xi\qquad \Rightarrow\qquad |\xi|\in D(Y)
\]
and $\|Y|\xi|\|=\|Y\xi\|$. In particular, if $\xi=\xi_+-\xi_-$ is the
polar decomposition of a $J$-real vector $\xi=J\xi\in D(Y)$, then $\xi_\pm=(|\xi|\pm\xi)/2\in D(Y)$ and
\[
\|Y\xi_\pm\|\le \|Y\xi\|\, .
\]
\end{lem}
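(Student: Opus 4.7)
The plan is to realise $|\xi|$ and $\xi_\pm$ as images of $\xi$ under bounded projections in the commutant $M'$, so that affiliation of $Y$ to $M$ immediately yields both membership in $D(Y)$ and explicit formulas for the norms. The setup: by self-polarity of $L^2_+(M)$ in the standard form, the $J$-real vector $\xi$ admits a unique orthogonal decomposition $\xi=\xi_+-\xi_-$ with $\xi_\pm\in L^2_+(M)$ and $(\xi_+|\xi_-)=0$, so that $|\xi|:=\xi_++\xi_-\in L^2_+(M)$ is well defined. Let $p_\pm\in M$ denote the support projections of the vector functionals $\omega_{\xi_\pm}(x):=(\xi_\pm|x\xi_\pm)$. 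The classical characterization (see \cite{BR1,Tak}) of orthogonality in the natural cone in terms of support orthogonality yields $p_+p_-=0$, and hence also $j(p_+)j(p_-)=j(p_+p_-)=0$ in $M'$.

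Next, since $j(p_\pm)\in M'$ is the support projection of $\xi_\pm$ from the commutant side, one has $j(p_+)\xi_+=\xi_+$, while $j(p_+)\xi_-=j(p_+)j(p_-)\xi_-=0$, and symmetrically for $p_-$. Consequently
\[
\xi_+=j(p_+)\xi,\qquad \xi_-=-j(p_-)\xi,\qquad |\xi|=j(p_+-p_-)\xi,\qquad \xi=j(p_++p_-)\xi.
\]
Because $Y$ is affiliated to $M$, every bounded element of $M'$ preserves $D(Y)$ and commutes with $Y$ on $D(Y)$; applied to $\xi\in D(Y)$, this gives $\xi_\pm,|\xi|\in D(Y)$ together with
\[
Y\xi_\pm=\pm j(p_\pm)\,Y\xi,\qquad Y|\xi|=j(p_+-p_-)\,Y\xi,\qquad Y\xi=j(p_++p_-)\,Y\xi.
\]
The bounds $\|Y\xi_\pm\|\le\|Y\xi\|$ then follow at once from the fact that $j(p_\pm)$ are contractions.

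For the equality $\|Y|\xi|\|=\|Y\xi\|$, I would use the identity $(p_+-p_-)^2=p_++p_-$ (which holds because $p_+p_-=0$) together with the relation $j(p_++p_-)Y\xi=Y\xi$ already derived:
\[
\|Y|\xi|\|^2=\bigl(Y\xi\,\big|\,j\bigl((p_+-p_-)^2\bigr)Y\xi\bigr)=\bigl(Y\xi\,\big|\,j(p_++p_-)Y\xi\bigr)=\|Y\xi\|^2.
\]

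The only non-routine step is the translation of the Hilbert-space orthogonality $(\xi_+|\xi_-)=0$ into the algebraic orthogonality $p_+p_-=0$ of support projections in $M$; this rests on the standard correspondence between the natural cone $L^2_+(M)$ and positive normal functionals on $M$, together with the characterization of orthogonal positive functionals in terms of their supports. Everything else is a direct application of the definition of affiliation of $Y$ to $M$ and of the fact that $j$ maps projections in $M$ to projections in $M'$.
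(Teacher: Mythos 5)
Your proof is correct, and it takes a genuinely more direct route than the paper's. Both arguments hinge on the same key fact about standard forms: orthogonal vectors $\xi_+\perp\xi_-$ in $L^2_+(M)$ have orthogonal support projections (the paper phrases it as $s'_+s'_-=0$ for the $M'$-supports; you phrase it as $p_+p_-=0$ for the $M$-supports and then apply $j$, which is the same statement). The difference lies in how the general unbounded case is handled. The paper first proves the norm equality for \emph{bounded} $Y$ by computing $(Y\xi_+|Y\xi_-)=0$ directly, and then transfers the result to the unbounded case via the regularization $Y_\varepsilon=Y(I+\varepsilon|Y|)^{-1}$, spectral measures and Monotone Convergence; the domain statement $|\xi|\in D(Y)$ then drops out of the finiteness of the limiting integral. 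You instead observe at the outset that $\xi_\pm$, $|\xi|$ and $\xi$ itself are all obtained from $\xi$ by applying the bounded operators $j(p_\pm)$, $j(p_+-p_-)$, $j(p_++p_-)\in M'$, and then invoke affiliation of $Y$ to $M$ (domain invariance under $M'$ and commutation) to get membership in $D(Y)$ and the operator identities $Y\xi_\pm=\pm j(p_\pm)Y\xi$, $Y|\xi|=j(p_+-p_-)Y\xi$ in one stroke; the norm equality then reduces to the projection algebra identity $(p_+-p_-)^2=p_++p_-$. Your route is more economical, it dispenses with the $\varepsilon$-approximation and spectral-measure bookkeeping entirely, and it gives the inequality $\|Y\xi_\pm\|\le\|Y\xi\|$ directly rather than as an afterthought. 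The one non-trivial input you must cite carefully is the equivalence between Hilbert-space orthogonality of vectors in the natural cone and orthogonality of their supports, together with $s'(\eta)=Js(\eta)J$ for $\eta\in L^2_+(M)$; both are standard in the structure theory of standard forms and are also used (implicitly) by the paper.
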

\begin{proof}
Consider first the case where $Y$ is bounded, and let $s'_\pm\in M'$ be the supports in $M'$ of the positive and negative parts $\xi_\pm$ of a $J$-real $\xi\in L^2(M)$. Then
$(Y\xi_+|Y\xi_-)=(Ys'_+\xi_+|Ys'_-\xi_-)=(s'_+Y\xi_+|s'_-Y\xi_-)=0$
since $\xi_+\perp\xi_-$ imply $s'_+s'_-=0$, by \cite{ara1} Theorem 4. Thus
\[
\begin{split}
\|Y\xi\|^2 &= (Y\xi|Y\xi)=(Y\xi_+-Y\xi_-|Y\xi_+-Y\xi_-)\\
&=(Y\xi_++Y\xi_-|Y\xi_++Y\xi_-)=\|Y|\xi|\|^2.
\end{split}
\]
To deal with the general case, fix $\xi=J\xi\in D(Y)=D(|Y|)$ and consider the family of bounded operators $|Y|_\varepsilon :=|Y|(I+\varepsilon |Y|)^{-1}\in M$ for $\varepsilon>0$ as well as the spectral measure $E^{|Y|}$ of the self-adjoint operator $|Y|$. Applying the result concerning the bounded case, for all $\varepsilon >0$ we have
\[
\int_0^{+\infty} E^{|Y|}_{|\xi|,|\xi|}(d\lambda)\frac{\lambda^2}{(1+\varepsilon \lambda)^2}=\||Y|_\varepsilon |\xi|\|^2=\||Y|_\varepsilon\xi\|^2
=\int_0^{+\infty} E^{|Y|}_{\xi,\xi}(d\lambda)\frac{\lambda^2}{(1+\varepsilon \lambda)^2}.
\]
Letting $\varepsilon \downarrow 0$, by the Monotone Convergence Theorem we have $|\xi|\in D(|Y|)=D(Y)$ and $\|Y|\xi|\|=\|Y\xi\|$.
\end{proof}
\begin{lem}
Let $(Y,D(Y))$ be a densely defined, closed operator affiliated to $M$ and $\mu,\nu\ge 0$. Let $\xi\in D(d_Y^{\mu,\nu}):=D(Y)\cap JD(Y^*)$ be a $J$-real vector with polar decomposition $\xi=\xi_+-\xi_-$. Then $\xi_\pm\in D(Y)\cap D(Y^*)$ and
\[
(Y\xi_+|j(Y^*)\xi_-)\ge 0\, .
\]
\end{lem}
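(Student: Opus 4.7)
The plan is to first establish the inequality in the bounded case via an algebraic rearrangement that lets us invoke the self-polarity of $L^2_+(M)$, and then to extend to the unbounded affiliated case by the same resolvent approximation used in Lemma 2.4.

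First I would clean up the domains. Since $J\xi=\xi$ and $\xi\in JD(Y^*)$, applying $J$ shows $\xi\in D(Y^*)$, so in fact $\xi\in D(Y)\cap D(Y^*)$. Both $Y$ and $Y^*$ are closed and affiliated to $M$, so Lemma 2.4 applied to each gives $\xi_\pm\in D(Y)\cap D(Y^*)$. In particular $Y\xi_+$ and $j(Y^*)\xi_-=JY^*J\xi_-=JY^*\xi_-$ (using $J\xi_-=\xi_-$) are well-defined vectors in $L^2(M)$.

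Next, assume first that $Y\in M$ is bounded. Since $Y^*\in M$ and $j(Y^*)\in M'$ commute, I would rewrite
\[
(Y\xi_+\,|\,j(Y^*)\xi_-)=(\xi_+\,|\,Y^*j(Y^*)\xi_-).
\]
The operator $Y^*j(Y^*)=Y^*JY^*J$ is of the privileged form $aj(a)$ with $a=Y^*\in M$, and a basic property of the standard form (Araki--Connes--Haagerup, see e.g.\ \cite{BR1}, \cite{Tak}) asserts that for every $a\in M$ the operator $aj(a)=aJaJ$ leaves the natural self-polar cone $L^2_+(M)$ globally invariant. Since $\xi_-\in L^2_+(M)$, this gives $Y^*j(Y^*)\xi_-\in L^2_+(M)$; combining with $\xi_+\in L^2_+(M)$ and the self-polarity of the cone yields $(\xi_+\,|\,Y^*j(Y^*)\xi_-)\ge 0$, which is exactly the desired inequality.

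For general closed $(Y,D(Y))$ affiliated to $M$, I would pass to the bounded approximants $Y_\varepsilon:=Y(I+\varepsilon|Y|)^{-1}\in M$ already employed in the proof of Lemma 2.4, noting that $Y_\varepsilon^*=(I+\varepsilon|Y|)^{-1}Y^*$ on $D(Y^*)$. Strong convergence of the resolvents and $\xi_\pm\in D(Y)\cap D(Y^*)$ give $Y_\varepsilon\xi_+\to Y\xi_+$ and $Y_\varepsilon^*\xi_-\to Y^*\xi_-$, whence $j(Y_\varepsilon^*)\xi_-=JY_\varepsilon^*\xi_-\to JY^*\xi_-=j(Y^*)\xi_-$. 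Since the bounded case gives $(Y_\varepsilon\xi_+\,|\,j(Y_\varepsilon^*)\xi_-)\ge 0$ for every $\varepsilon>0$, taking $\varepsilon\downarrow 0$ yields the inequality. The only real obstacle is conceptual: recognizing that moving $Y$ across the inner product converts the pairing into the pairing of a cone vector with an element in the image of the cone-preserving operator $a j(a)$; once this is seen, both the bounded step and the limiting argument are essentially automatic.
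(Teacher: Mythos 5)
Your proof is correct and reaches the conclusion by the same basic mechanism, but you organize the passage to unbounded $Y$ a little differently from the paper. The paper first reduces to the positive operator $|Y|$ via the polar decomposition $Y=U|Y|$, rewriting $(Y\xi_+|j(Y^*)\xi_-)$ as $(|Y|\xi_+|j(|Y|)\,U^*j(U^*)\xi_-)$ and noting that $U^*j(U^*)\xi_-$ lies again in $L^2_+(M)$; it then expresses $(|Y|\eta|j(|Y|)\eta')$ through the joint spectral measures of the strongly commuting $|Y|$ and $j(|Y|)$ and invokes Dominated Convergence with the truncation $f_\varepsilon(\lambda)=\lambda/(1+\varepsilon\lambda)$. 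You instead leave $Y$ as it is, pass to the resolvent approximants $Y_\varepsilon=Y(I+\varepsilon|Y|)^{-1}\in M$ with $Y_\varepsilon^*=(I+\varepsilon|Y|)^{-1}Y^*$, and conclude from the bounded case together with the strong convergences $Y_\varepsilon\xi_+\to Y\xi_+$ and $j(Y_\varepsilon^*)\xi_-\to j(Y^*)\xi_-$ and continuity of the inner product. This sidesteps the intermediate reduction to $|Y|$ and the spectral-measure bookkeeping, and is if anything slightly more economical; the price is merely that you must verify convergence of both arguments of the sesquilinear form rather than of a single scalar integrand. Both arguments rest on the same essential ingredient, namely that $a\,j(a)$ preserves the self-polar cone $L^2_+(M)$ for every $a\in M$, and neither gains or loses generality relative to the other.
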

\begin{proof}
If $Y\in M$ the assertion is true because in that case $Y^*j(Y^*)$ is positivity preserving. To deal with the general case, let $Y=U|Y|$ be the polar decomposition of $Y$. By the previous Lemma 2.3, since $\xi=J\xi\in D(Y)$, we have $\xi_\pm\in D(Y)=D(|Y|)$. Since also $Y^*$ is a densely defined, closed operator affiliated to $M$ and, by assumption, $\xi=J\xi\in JJD(Y^*)=D(Y^*)$, again by Lemma 2.3 we have $\xi_-\in D(Y^*)$ too so that $(Y\xi_+|j(Y^*)\xi_-)=(|Y|\xi_+|j(|Y|)U^*j(U^*)\xi_-)$ (here we implicitly used the fact that $U^*j(U^*)\xi_-\in D(|Y|)$ since $\xi_-\in D(|Y|)$ by the previous Lemma 2.3, $U\in M$ and $j(|Y|)$ is affiliated to the commutant algebra $M'$). Since $U\in M$ so that $U^*j(U^*)$ is positivity preserving, it is enough to prove that $(|Y|\eta|j(|Y|)\eta')\ge 0$ for all $\eta\, , \eta'\in D(|Y|)\cap L^2_+(M)$. Since $|Y|$ and $j(|Y|)$ strongly commute, we can represent the value $(|Y|\eta|j(|Y|)\eta')$ as an integral over the product of the spectral measures of the two operators
\[
(|Y|\eta|j(|Y|)\eta')=\int_{[0,+\infty)^2}(E^{|Y|}_\eta\times E^{j(|Y|)}_{\eta'})(d\lambda,d\lambda')\lambda\cdot\lambda' \, .
\]
Setting $f_\varepsilon (\lambda):=\lambda/(1+\varepsilon\lambda)$ we have $0\le f_\varepsilon(\lambda)\le\lambda$ and $\lim_{\varepsilon\to 0}f_\varepsilon (\lambda)=\lambda$. By the Dominated Convergence Theorem we
have
\[
\begin{split}
(|Y|\eta|j(|Y|)\eta')&=\int_{[0,+\infty)^2}(E^{|Y|}_\eta\times E^{j(|Y|)}_{\eta'})(d\lambda,d\lambda')\lambda\cdot\lambda' \\
&=\lim_{\varepsilon\to 0} \int_{[0,+\infty)^2}(E^{|Y|}_\eta\times E^{j(|Y|)}_{\eta'})(d\lambda,d\lambda')f_\varepsilon(\lambda)\cdot f_\varepsilon(\lambda') \\
&=\lim_{\varepsilon\to 0} (f_\varepsilon(|Y|)\eta|j(f_\varepsilon (|Y|))\eta') \\
&=\lim_{\varepsilon\to 0} (\eta|f_\varepsilon(|Y|)j(f_\varepsilon(|Y|))\eta')\ge 0
\end{split}
\]
since $f_\varepsilon(|Y|)\in M$ and $f_\varepsilon(|Y|)j(f_\varepsilon(|Y|))$ is positivity preserving.
\end{proof}

\begin{thm}
Let $(Y,D(Y))$ be a densely defined, closed operator affiliated to $M$ such that $\xi_0\in D(Y)\cap D(Y^*)$ and $\mu,\nu>0$. Then the quadratic form $\tilde\E_Y^{\mu,\nu}:\tilde\F_Y\to [0,+\infty)$ on $L^2(M)$
\[
\tilde\E_Y^{\mu,\nu}[\xi]:=\|d_Y^{\mu,\nu}\xi\|^2_{L^2(M)} + \|d_{Y^*}^{\nu,\mu}\xi\|^2_{L^2(M)}\qquad \tilde\F_Y:=D(d_Y^{\mu,\nu})\cap D(d_{Y^*}^{\nu,\mu})
\]
is densely defined, closable, $J$-real (recall that $\|d_{Y^*}^{\nu,\mu}\xi\|^2_{L^2(M)}=\|d_{Y}^{\mu,\nu}J\xi\|^2_{L^2(M)}$) and satisfies the first Beurling-Deny condition
\[
\xi=J\xi\in \tilde\F_Y\qquad\Rightarrow\qquad \xi_\pm\in\tilde\F_Y\qquad\text{and}\qquad \tilde\E_Y^{\mu,\nu}(\xi_+|\xi_-)\le 0.
\]
Its closure $(\E_Y^{\mu,\nu},\F_Y^{\mu,\nu})$ satisfies the first Beurling-Deny condition too and generates a contractive, positivity preserving semigroup $\{T_t:t\ge 0\}$.
Moreover, $(\E_Y^{\mu,\nu},\F_Y^{\mu,\nu})$ is a conservative, in the sense that
\[
\xi_0\in\F_Y^{\mu,\nu},\qquad \E_Y^{\mu,\nu}[\xi_0]=0,
\]
completely Dirichlet form with respect to $(M,\o_0)$ and the associated completely Markovian semigroup is conservative, in the sense that
\[
T_t\xi_0=\xi_0\qquad t\ge 0,
\]
if and only if $Y\xi_0\in L^2(M)$ is an eigenvector of the modular operator corresponding to the eigenvalue $\mu/\nu$
\[
Y\xi_0\in D(\Delta_0^{1/2}),\qquad \Delta_0^{1/2}Y\xi_0=(\mu/\nu)Y\xi_0.
\]
\end{thm}

\begin{proof}
Since $\tilde\F_Y=D(Y)\cap JD(Y^*)\cap D(Y^*)\cap JD(Y)=D(Y)\cap D(Y^*)\cap J(D(Y)\cap D(Y^*))$, we have $J\tilde\F_Y=\tilde\F_Y$ and $\xi_0=J\xi_0\in\tilde\F_Y$. Since $d_Y^{\mu,\nu}J\xi=i(\mu Y-\nu JY^*J)J\xi=i(\mu YJ\xi-\nu JY^*\xi)=iJ(\mu JYJ\xi-\nu Y^*\xi)=Ji(\nu Y^*\xi-\mu JYJ\xi)=Jd_{Y^*}^{\nu,\mu}\xi$ for all $\xi\in\tilde\F_Y$ and, exchanging the role of $Y$ and $Y^*$, we have $d_{Y^*}^{\nu,\mu}J\xi=Jd_Y^{\mu,\nu}\xi$ too, we get
\[
\tilde\E_Y^{\mu,\nu}[J\xi]=\|d_Y^{\mu,\nu}J\xi\|^2 + \|d_{Y^*}^{\nu,\mu}J\xi\|^2=\|Jd_{Y^*}^{\nu,\mu}\xi\|^2 + \|Jd_Y^{\mu,\nu}\xi\|^2=\tilde\E_Y^{\mu,\nu}[\xi]\qquad \xi\in \tilde\F_Y\, ,
\]
which proves that the quadratic form $(\tilde\E_Y^{\mu,\nu},\tilde\F_Y)$ is $J$-real.\\
Consider now a $J$-real vector $\xi\in\tilde\F_Y$, its polar decomposition $\xi=\xi_+-\xi_-$ with respect to the self-polar cone $L^2_+(M)$ and recall that, by definition, $|\xi|:=\xi_+ + \xi_-$. By the previous lemma, $|\xi|\in\tilde\F_Y$ so that $\xi_\pm=(|\xi|\pm\xi)/2\in\tilde\F_Y$. Then, if $s_\pm\in M$ (resp. $s'_\pm\in M'$) are the supports of $\xi_\pm$ in $M$ (resp. $M'$), we have
\[
\begin{split}
(d_Y^{\mu,\nu}\xi_+|d_Y^{\mu,\nu}\xi_-)=&((\mu Y\xi_+-\nu j(Y^*)\xi_+|(\mu Y\xi_- -\nu j(Y^*)\xi_-) \\
=&\mu^2(Ys'_+\xi_+| Ys'_-\xi_-) +\nu^2(j(Y^*)s_+\xi_+|j(Y^*)s_-\xi_-) \\
&-\mu\nu\Bigl((Y\xi_+|j(Y^*)\xi_-) + (j(Y^*)\xi_+|Y\xi_-)\Bigr) \\
=&-\mu\nu\Bigl((Y\xi_+|j(Y^*)\xi_-) + (j(Y^*)\xi_+|Y\xi_-)\Bigr)\le 0
\end{split}
\]
by Lemma 2.4. Since, analogously, $(d_{Y^*}^{\nu,\mu}\xi_+|d_{Y^*}^{\nu,\mu}\xi_-)\le 0$ we have $\tilde\E_Y^{\mu,\nu}(\xi_+|\xi_-)\le 0$ and consequently the first Beurling-Deny condition is satisfied by $(\tilde\E_Y^{\mu,\nu},\tilde\F_Y)$
\[
\xi=J\xi\in\tilde\F_Y\qquad\Rightarrow\qquad |\xi|\in\tilde\F_Y\qquad\text{and}\qquad \tilde\E_Y^{\mu,\nu}[|\xi|]\le \tilde\E_Y^{\mu,\nu}[\xi].
\]
To establish the same condition for the closure $(\E_Y^{\mu,\nu},\F_Y^{\mu,\nu})$, we adapt the proof of \cite{c1} Proposition 5.1 (according the suggestions which there precede it).\\
On one hand, since $J$ is an isometry for the graph norm of $(\tilde\E_Y^{\mu,\nu},\tilde\F_Y)$ and $\tilde\F_Y$ is a core for $(\E_Y^{\mu,\nu},\F_Y^{\mu,\nu})$,then $J$ is an isometry for the latter form too and the closure form is $J$-real.\\
On the other hand, let $\xi=J\xi\in\F_Y^{\mu,\nu}$ be a fixed $J$-real vector and let $\xi_n\in\tilde\F_Y$ be a sequence converging to it in the graph norm of $(\E_Y^{\mu,\nu},\F_Y^{\mu,\nu})$. Since the Hilbert projection $\eta\mapsto\eta_+$ of the $J$-real part of $L^2(M)$ onto the closed, convex cone $L^2_+(M)$ is norm continuous and $|\eta|=2\eta_+-\eta$, it follows that the modulus map $\eta\mapsto |\eta|$ is norm continuous too. Then, since the form $\E_Y^{\mu,\nu}$ is norm lower semicontinuous on $L^2(M)$, it follows that
\[
\begin{split}
\E_Y^{\mu,\nu}[|\xi|]&\le\liminf_n\E_Y^{\mu,\nu}[|\xi_n|]=\liminf_n\tilde\E_Y^{\mu,\nu}[|\xi_n|]
\le\liminf_n\tilde\E_Y^{\mu,\nu}[\xi_n] \\
&=\liminf_n\E_Y^{\mu,\nu}[\xi_n]=\E_Y^{\mu,\nu}[\xi].
\end{split}
\]
The first Beurling-Deny condition is thus verified and, by \cite{c1} Proposition 4.10, it follows that the semigroup $\{T_t:t\ge 0\}$ has the desired properties.\\
Concerning the conservativeness property, notice that  $\xi_0\in\tilde\F_Y\subseteq\F_Y^{\mu,\nu}$. If $\E_Y^{\mu,\nu}[\xi_0]=0$ then $\tilde\E_Y[\xi_0]=0$ so that $d_Y^{\mu,\nu}\xi_0=0$ which implies
$\mu Y\xi_0=\nu j(Y^*)\xi_0=\nu J Y^*\xi_0$ and, for any $x\in M$, $\mu(x\xi_0|Y\xi_0)=\nu (x\xi_0|J Y^*\xi_0)=\nu (Y^*\xi_0|J x\xi_0)=\nu (Y^*\xi_0|J xJ\xi_0)=\nu (J x^*J\xi_0|Y\xi_0)=\nu (J x^*\xi_0|Y\xi_0)=\\\nu (\Delta_0^{1/2}x\xi_0|Y\xi_0)$ since $Y$ is affiliated to $M$ and $Jx^*J\in M'$. Setting $\lambda^2:=\mu/\nu$, this in turn implies $((\Delta_0^{1/2}-\lambda^2 I)x\xi_0|Y\xi_0)=0$ for all $x\in M$ and, since $M\xi_0$ is a core for $\Delta_0^{1/2}$, it follows that $Y\xi_0\in D(\Delta_0^{1/2})$ and $\Delta_0^{1/2} Y\xi_0=(\mu/\nu) Y\xi_0$, i.e. $Y\xi_0$ is an eigenvalue of the modular operator with eigenvalue $\mu/\nu$.\\
On the other hand, if $Y\xi_0\in D(\Delta_0^{1/2})$ and $\Delta_0^{1/2}Y\xi_0=(\mu/\nu)Y\xi_0$, using the identities above, it follows that $d_Y^{\mu,\nu}\xi_0=0$  and $d_{Y^*}^{\nu,\mu}\xi_0=d_Y^{\mu,\nu}J\xi_0=d_Y^{\mu,\nu}\xi_0=0$ so that $\E_Y^{\mu,\nu}\xi_0=\tilde\E_Y[\xi_0]=0$, i.e. $(\E_Y^{\mu,\nu},\F_Y^{\mu,\nu})$ is conservative. By \cite{c1} Proposition 4.10, given conservativness, the first Beurling-Deny property and Markovianity are equivalent for quadratic forms as well as the positivity preserving property and the Markovianity are equivalent for the associated semigroups.\\
Concerning the complete Markovianity of the Dirichlet form, we notice that for any $n\ge 1$, the ampliation $(\E^\lambda_Y)^n:L^2(M\otimes M_n(\mathbb{C}),\o_0\otimes {\rm tr}_n)\to [0,+\infty]$, defined as
$(\E^\lambda_Y)^n[[\xi_{i,j}]_{i,j=1}^n]:=\sum_{i,j=1}^n\E[\xi_{i,j}]$, has the same structure as $\E^\lambda_Y$. More precisely, a closed operator $Y^n:=Y\otimes I_n$ is densely defined on $D(Y^n):=D(Y)\otimes_{\rm alg}L^2(M_n(\mathbb{C}),tr_n)\subset L^2(M\otimes M_n(\mathbb{C}),\o_0\otimes tr_n)$ and one may check that $(\E^\lambda_Y)^n=\E^\lambda_{Y^n}$. If $\xi_0\in D(Y)\cap D(Y^*)$ and $Y\xi_0\in L^2(M)$ is an eigenvector of the modular operator $\Delta_0^{1/2}$ of the state $\o_0$ on $M$, corresponding to the eigenvalue $\mu/\nu$, then, denoting by $\zeta_n\in L^2(M_n(\mathbb{C}),tr_n)$ the unit vector representing the trace state, it easily verified that $\xi_0\otimes \zeta_n\in D(Y^n)\cap D((Y^n)^*)$ and that $Y^n(\xi_0\otimes\zeta_n)=Y\xi_0\otimes\zeta_n$ is an eigenvalue of the modular operator of the state $\o_0\otimes tr_n$ on $M\otimes M_n(\mathbb{C})$, corresponding to the same eigenvalue $\mu/\nu$. Applying the results obtained above to the form $(\E^\lambda_Y)^n$ in place of $\E^\lambda_Y$, we get its Markovianity for any $n\ge 1$ and complete Markovianity of the associated semigroup.
\end{proof}
\noindent
{\bf Notation.} If $\lambda^2\in {\rm Sp}(\Delta_0^{1/2})\setminus\{0\}$ is a strictly positive eigenvalue of the modular operator and $\mu/\nu=\lambda^2$, then
$d_Y^{\mu,\nu}=\sqrt{\mu\nu} d_Y^{\lambda,\lambda^{-1}}$, $d_{Y^*}^{\nu,\mu}=\sqrt{\mu\nu} d_{Y^*}^{\lambda^{-1},\lambda}$ and $\E_Y^{\mu,\nu}=\mu\nu\cdot \E_Y^{\lambda,\lambda^{-1}}$. Since now on, we will adopt the simplified notation
\[
\E_Y^\lambda:=\E_Y^{\lambda,\lambda^{-1}}.
\]
{\bf Remark.} To any eigenvector $\xi\in D(S_0)$ of $\Delta_0^{1/2}$ or, equivalently, of the Araki Hamiltonian $\ln\Delta_0$, we associate a completely Dirichlet form $\E^\lambda_Y$ choosing a densely defined, closed operator $(Y,D(Y))$ as in Lemma 2.1. For this choice there exists a canonical candidate, namely $({\overline{Y_0}}, D({\overline{Y_0}}))$. In general $(\E^\lambda_Y, \F^\lambda_Y)$ may depend upon the operator $(Y,D(Y))$ and not only upon the eigenvector $\xi=Y\xi_0$ it represents. The next result shows how this is connected to the GNS symmetry of the Markovian semigroup.
\begin{thm}(GNS symmetry)
Let $(Y,D(Y))$ be a densely defined, closed operator affiliated to $M$, $\mu,\nu>0$ such that $\xi_0\in D(Y)\cap D(Y^*)$ and $Y\xi_0\in L^2(M)$ is an eigenvector of $(\Delta_0^{1/2},D(\Delta_0^{1/2}))$  for the eigenvalue $\lambda^2:=\mu/\nu$. Then, for any $t\in\R$,
\vskip0.1truecm\noindent
i) the densely defined, closed operator $(Y_t,D(Y_t)):=(\Delta_0^{it}Y\Delta_0^{-it},\Delta_0^{it}D(Y))$, affiliated to $M$,  verifies $\xi_0\in D(Y_t)\cap D(Y_t^*)$,
$Y_t\xi_0=\lambda^{4ti}Y\xi_0\in L^2(M)$ and $Y_t=\lambda^{4ti}Y$ on the subspace $M'\xi_0$;\\
ii) $(\E^\lambda_{Y_t},\F^\lambda_{Y_t})$ is a Dirichlet form with respect to $(M,\xi_0)$ coinciding with
\[
\F^\lambda_{Y_t}=\Delta_0^{it}(\F^\lambda_Y)\qquad \E^\lambda_{Y_t}[\eta]=\E^\lambda_Y[\Delta_0^{-it}\eta].
\]
If, moreover, $M'\xi_0\subseteq D(Y)$ is a core for $(Y,D(Y))$, then, for any $t\in\R$, we have\\
iii) $(Y_t,D(Y_t))=(\lambda^{4it}\cdot Y,D(Y))$, for any $t\in\R$;\\
iv) $(\E^\lambda_{Y_t},\F^\lambda_{Y_t})=(\E^\lambda_Y,\F^\lambda_Y)$, the associated Markovian semigroup is symmetric
\[
(T_t(x\xi_0)|y\xi_0)=(x\xi_0|T_t(y\xi_0))\qquad x,y\in M,\,\, t\ge 0
\]
and, in particular, it commutes with $\{\Delta_0^{it}:t\in\R\}$;\\
v) The semigroup generated by $(\E^\lambda_{\overline Y_0},\F^\lambda_{\overline Y_0})$ is GNS symmetric (notations of Lemma 2.1).
\end{thm}
\begin{proof}
i) Since, for any $t\in\R$, one has $\Delta_0^{it}\xi_0=\xi_0$, it follows that $\xi_0\in D(Y_t)\cap D(Y_t^*)$, $Y_t\xi_0=
\Delta_0^{it}Y\xi_0=\lambda^{4ti}Y\xi_0\in L^2(M)$ and $Y_t(z'\xi_0)=z'Y_t\xi_0=\lambda^{4ti}\cdot z'Y\xi_0=\lambda^{4ti}\cdot Y (z'\xi_0)$ for any $z'\in M'$; ii) thus $Y_t\xi_0$ is an eigenvector of $(\Delta_0^{1/2},D(\Delta_0^{1/2}))$ for the eigenvalue $\lambda^2$ and, by Theorem 2.5,  $(\E^\lambda_{Y_t},\F^\lambda_{Y_t})$ is a well defined Dirichlet form. The displayed identity follows from the identities $d^{\mu,\nu}_{Y_t}=\Delta_0^{it}\circ d^{\mu,\nu}_Y\circ \Delta_0^{-it}$, $d^{\mu,\nu}_{Y_t^*}=\Delta_0^{it}\circ d^{\mu,\nu}_{Y^*}\circ \Delta_0^{-it}$, valid, for any $t\in\R$, on $\tilde \F_{Y_t}=\tilde \F_Y$ and the fact that this space is a form core for $\eta\mapsto \E^\lambda_{Y_t}[\eta]$ and $\eta\mapsto\E^\lambda_Y[\Delta_0^{-it}\eta]$.\\
iii) Since the core $M'\xi_0$ for $(Y,D(Y))$ is invariant under the group $\{\Delta_0^{it}:t\in\R\}$, it is a core also for $(Y_t,D(Y_t))$, for any fixed $t\in\R$. Since, by i), $Y_t=\lambda^{4ti}\cdot Y$ on
this common core, we have $D(Y_t)=D(Y)$ and $Y_t=\lambda^{4ti}\cdot Y$, for any $t\in\R$; iv) since, by iii), $(\E^\lambda_{Y_t},\F^\lambda_{Y_t})=(\E^\lambda_Y,\F^\lambda_Y)$ for any $t\in\R$, ii) implies that $(\E^\lambda_Y,\F^\lambda_Y)$ is invariant under the unitary group $\{\Delta_0^{it}:t\in\R\}$ so that the Markovian semigroup it generates commutes with $\{\Delta_0^{it}:t\in\R\}$ and it is GNS symmetric by \cite{c1} Theorem 6.6; v) follows from iv) as, by definition, $M'\xi_0$ is a core for $(\overline{Y_0},D(\overline{Y_0}))$.
\end{proof}

\subsection{Representation of Dirichlet forms as square of commutators}
In this section we show how to represent the Dirichlet forms on $L^2(M)$ constructed above, in terms of generalized commutators, i.e. unbounded spatial derivations on $M$.\\
We recall that $(S_0,D(S_0))$ is an {\it unbounded} conjugation, i.e. anti-linear and idempotent on its domain. Thus $S_0^2$ is the identity operator on $D(S_0)$ or, more explicitly, that $\xi\in D(S_0)$ implies $S_0\xi\in D(S_0)$ and $S_0(S_0\xi)=\xi$. In other terms, the image of $S_0$ coincides with its domain and $S_0=S_0^{-1}$ holds true as an identity between densely defined, closed operators. In terms of the polar decomposition $S_0=J\Delta_0^{1/2}$ we have $J\Delta_0^{1/2}=\Delta_0^{-1/2}J$ as an identity between densely defined, closed operators. This means, in particular, that the modular conjugation exchanges domains as follows $JD(\Delta_0^{1/2})=D(\Delta_0^{-1/2})$, $D(\Delta_0^{1/2})=JD(\Delta_0^{-1/2})$. More in general, one has the intertwining relation ${\bar f}(\Delta_0^{-1})=Jf(\Delta_0) J$ between closed operators valid for any Borel measurable function $f:[0,+\infty)\to\C$ (see Introduction to Chapter 10 in \cite{sz}). The relation, which  is equivalent to $JD({\bar f}(\Delta_0^{-1}))=D(f(\Delta_0) )$ and ${\bar f}(\Delta_0^{-1})\xi=Jf(\Delta_0) J\xi$ for all $\xi \in D({\bar f}(\Delta_0^{-1}))$, will be mostly used for power functions $f$.
\vskip0.1truecm\noindent
Among its consequences, we will make use of the following:
\vskip0.1truecm\noindent
a)  for any $\alpha\in\R$, the closed operator $J\Delta_0^{\alpha}$ is an unbounded conjugation on its domain $D(\Delta_0^{\alpha})$;
\vskip0.1truecm\noindent
b) $S_0=J\Delta_0^{1/2}=\Delta_0^{-1/4}J\Delta_0^{1/4}$ is an identity between densely defined, closed operators: in fact, $D(\Delta_0^{-1/4}J\Delta_0^{1/4}):=\{\xi\in D(\Delta_0^{1/4}):J\Delta_0^{1/4}\xi\in D(\Delta_0^{-1/4})\}$ but since $D(\Delta_0^{-1/4})=JD(\Delta_0^{1/4})$ one has $D(\Delta_0^{-1/4}J\Delta_0^{1/4})=\{\xi\in D(\Delta_0^{1/4}):\Delta_0^{1/4}\xi\in D(\Delta_0^{1/4})\}=D(\Delta_0^{1/2})$ and, for all $\xi\in D(\Delta_0^{1/2})$,
\[
(\Delta_0^{-1/4}J\Delta_0^{1/4})\xi=(\Delta_0^{-1/4}J)\Delta_0^{1/4}\xi=(J\Delta_0^{1/4})\Delta_0^{1/4}\xi=J\Delta_0^{1/2}\xi;
\]
c) $(J\Delta_0^{1/4},D(\Delta_0^{1/4}))$ is a closed extension of the densely defined operator $(\Delta_0^{1/4}S_0,D(S_0))$: in fact, the latter operator is well defined since $\zeta\in D(S_0)$ implies $S_0\zeta\in D(S_0)=D(\Delta_0^{1/2})\subset D(\Delta_0^{1/4})$ and also $\Delta_0^{1/4}S_0\zeta=\Delta_0^{1/4}S_0^{-1}\zeta=\Delta_0^{1/4}\Delta_0^{-1/2}J\zeta=\Delta_0^{-1/4}J\zeta=J\Delta_0^{1/4}\zeta$;
\vskip0.1truecm\noindent
d) $(\Delta_0^{-1/4}J,D(\Delta_0^{1/4}))=(J\Delta_0^{1/4},D(\Delta_0^{1/4}))$ is a closed extension of the densely defined operator $(S_0\Delta_0^{-1/4},D(\Delta_0^{-1/4})\cap D(\Delta_0^{1/4}))$: in fact $D(S_0\Delta_0^{-1/4}):=\{\zeta\in D(\Delta_0^{-1/4}): \Delta_0^{-1/4}\zeta\in D(S_0)\}=\{\zeta\in D(\Delta_0^{-1/4}): \Delta_0^{-1/4}\zeta\in D(\Delta_0^{1/2})\}=D(\Delta_0^{-1/4})\cap D(\Delta_0^{1/4})$ and $J\Delta_0^{1/4}\zeta=J\Delta_0^{1/2}\Delta_0^{-1/4}\zeta=S_0\Delta_0^{-1/4}\zeta$ for all $\zeta\in D(\Delta_0^{-1/4})\cap D(\Delta_0^{1/4})$;\\
\vskip0.1truecm\noindent
e) Let $M_0\subseteq M$ be  the involutive w$^*$-dense sub-algebra of analytic vectors of the group $\sigma^{\o_0}$. For any $y\in M_0$, the operator $\Delta_0^{1/4}y\Delta_0^{-1/4}$ on $L^2(M)$ is densely defined on $i_0(M_0)$ and closable. Its closure is a bounded operator belonging to $M$, which coincides with the analytic extension of the map $\R\ni t\mapsto \sigma^{\o_0}_t(y)\in M_0\subset M$ evaluated at $t=-i/4$
\[
\overline{(\Delta_0^{1/4}y\Delta_0^{-1/4})}=\sigma^{\o_0}_{-i/4}(y)\in M_0\subset M
\]
and $\sigma^{\o_0}_{-i/4}(y)i_0(x)=i_0(yx)$ for all $x\in M_0$;\\
f) by Proposition in Section 9.24 in \cite{sz}, for any $y\in M_0$ and any $\alpha\in\C$ one has the important identity
\[
D(\Delta_0^{\alpha}y\Delta_0^{-\alpha})=D(\Delta_0^{-\alpha})
\]
and the boundedness of the operator $\Delta_0^{\alpha}y\Delta_0^{-\alpha}$ on $D(\Delta_0^{-\alpha})$. Since $JD(\Delta_0^{1/2})=D(\Delta_0^{-1/2})$, the case $\alpha=1/2$ implies that $D(S_0yS_0)=D(S_0)$ and the boundedness of the operator $S_0yS_0$ on $D(S_0)$;\\
\noindent
g) the involutive sub-algebra  $M_0':=JM_0J\subset M'$ coincides with the set of analytic vectors of the modular group of the commutant $M'$ associated to the state determined by $\xi_0\in L^2(M)$. The left Hilbert sub-algebra $M_0\xi_0\subset M\xi_0\subset L^2(M)$ is dense in $L^2(M)$ and it coincides with the symmetric embedding of the algebra of analytic elements
\[
M_0\xi_0=i_0(M_0),
\]
as it results from the identity $i_0(y)=\sigma^{\o_0}_{-i/4}(y)\xi_0$ valid for all $y\in M_0$ . Also, $M_0\xi_0$ is $J$-invariant
\[
Ji_0(y)=i_0(y^*)\qquad y\in M_0.
\]

\begin{lem}
If $\eta\in D(S_0)$, the densely defined operator $(\L_\eta,D(\L_\eta))$ given by
\[
D(\L_\eta):=i_0(M_0)\ni i_0(y)\qquad \L_\eta i_0(y):=J\sigma^{\o_0}_{-i/4}(y^*)J\eta
\]
is closable since its adjoint is an extension of the densely defined operator $B:D(B)\to L^2(M)$
\[
D(B):=M'\xi_0\ni z'\xi_0\qquad B(z'\xi_0):=z'S_0\eta.
\]
The densely defined operator $(\RR_\eta,D(\RR_\eta))$ given by
\[
D(\RR_\eta):=i_0(M_0)\ni i_0(y)\qquad \RR_\eta i_0(y):=\sigma^{\o_0}_{-i/4}(y)J\eta
\]
satisfies the relation $\RR_\eta=J\L_\eta J$  from which it follows that it is closable too.
\end{lem}
\begin{proof}
Since $\xi_0=J\xi_0\in D(\Delta_0^{-1/4})=D(\Delta_0^{1/4}y\Delta_0^{-1/4})$, $\Delta_0^{1/4}J=J\Delta_0^{-1/4}$, $y\xi_0\in D(\Delta_0^{1/2})$ and $\Delta_0^{1/2}y\xi_0\in D(\Delta_0^{-1/4})$, we have
\[
\begin{split}
J\sigma^{\o_0}_{-i/4}(y^*)J\xi_0&=J(\Delta_0^{1/4}y^*\Delta_0^{-1/4})\xi_0=J\Delta_0^{1/4}y^*\xi_0 \\
&=J\Delta_0^{1/4}J\Delta_0^{1/2}y\xi_0=\Delta_0^{1/4}y\xi_0=i_0(y).
\end{split}
\]
Since, moreover, $w'^*\xi_0=\Delta_0^{1/2}Jw'\xi_0$ for all $w\in M'$ and $J\sigma^{\o_0}_{-i/4}(y^*)J\in M'$ for all $y\in M_0$, for $z'\in M'$ we have
\[
\begin{split}
(z'\xi_0|\L_\eta i_0(y))&=(z'\xi_0|J\sigma^{\o_0}_{-i/4}(y^*)J\eta)=((J\sigma^{\o_0}_{-i/4}(y^*)J)^*z'\xi_0|\eta) \\
&=(\Delta_0^{1/2}Jz'^*J\sigma^{\o_0}_{-i/4}(y^*)J\xi_0|\eta)=(Jz'^*i_0(y)|\Delta_0^{1/2}\eta)\\
&=(J\Delta_0^{1/2}\eta |z'^*i_0(y)) =(z'S_0\eta |i_0(y))=(B(z'\xi_0)|i_0(y)).
\end{split}
\]
The relation between the operators $\L_\eta$ and $\RR_\eta$ follows from the identities $i_0(y^*)=\Delta_0^{1/4}y^*\xi_0=\Delta_0^{1/4}S_0(y\xi_0)
=\Delta_0^{1/4}\Delta_0^{-1/2}J(y\xi_0)=\Delta_0^{-1/4}J(y\xi_0)=J\Delta_0^{1/4}y\xi_0=Ji_0(y)$ for all $y\in M_0$ and the fact that
$J$ is idempotent and it leaves $D(\L_\eta)=D(\RR_\eta)=i_0(M_0)$ globally invariant: $J\L_\eta Ji_0(y)=J\L_\eta i_0(y^*)=JJ\sigma^\o_{-i/4}(y)J\eta=\RR_\eta i_0(y)$.
\end{proof}

\begin{lem}
Let $\xi\in D(S_0)$ and fix, by Lemma 2.1, a densely defined, closed operator $(X,D(X))$ affiliated to $M$ such that
\[
\xi_0\in D(X)\cap D(X^*),\qquad \xi=X\xi_0,\quad S_0(X\xi_0)=X^*\xi_0.
\]
Then the following properties hold true:
\vskip0.2truecm\noindent
i) the intersection of domains $D(X)\cap D(X^*)$ contains $M_0\xi_0$;
\vskip0.2truecm\noindent
ii) the images of $M_0\xi_0$ under $(X,D(X))$ and $(X^*,D(X^*))$ are contained in $D(S_0)$
\[
X(y\xi_0)\in D(S_0),\quad X^*(y^*\xi_0)\in D(S_0),\qquad \text{for all}\quad y\in M_0
\]
and
\[
S_0(Xy\xi_0)=y^*X^*\xi_0,\quad S_0(X^*y^*\xi_0)=yX\xi_0\qquad \text{for all}\quad y\in M_0;
\]
Consider the densely defined operators on $L^2(M)$ given by
\[
L_\xi i_0(y):=J(\Delta_0^{1/4}y^*\Delta_0^{-1/4})J\Delta_0^{1/4}\xi \qquad i_0(y)\in i_0(M_0)=:D(L_\xi ),
\]
\[
R_\xi i_0(y):=(\Delta_0^{1/4}y\Delta_0^{-1/4})J\Delta_0^{1/4}S_0(\xi) \qquad i_0(y)\in i_0(M_0)=:D(R_\xi ).
\]
These are closable, by Lemma 2.7, since $L_\xi=\L_\eta$ and $R_\xi=\RR_\eta$ for $\eta:=\Delta_0^{1/4}\xi\in D(S_0)$ and
\vskip0.2truecm\noindent
iii) for any $y\in M_0$ we have
\[
Xy\xi_0\in D(\Delta_0^{1/4}),\qquad L_\xi i_0(y)=\Delta_0^{1/4}Xy\xi_0;
\]
iv) for any $y\in M_0$ we have
\[
yX\xi_0\in D(\Delta_0^{1/4}),\qquad R_\xi i_0(y)=\Delta_0^{1/4}yX\xi_0;
\]
v) $\overline{L_\xi}$ is affiliated with $M$, $\overline{R_\xi}$ is affiliated with $M'$ and $\overline{R_\xi}=J\overline{L_\xi} J$;\\
vi) the operator $\Delta_0^{1/4}X\Delta_0^{-1/4}$ is well defined on $i_0(M_0)$ and there it coincides with $L_\xi$;\\
vii) the operator $J\Delta_0^{1/4}X^*\Delta_0^{-1/4}J$ is well defined on
$i_0(M_0)$ and there it coincides with $R_\xi$;\\
viii) If $\xi=X\xi_0$ is an eigenvector of $\Delta_0^{1/2}$ corresponding to the eigenvalue $\lambda^2>0$, with $\lambda>0$,
\[
\Delta_0^{1/2}X\xi_0=\lambda^2\cdot X\xi_0,
\]
then $L_\xi=\lambda X$ and $R_\xi=\lambda^{-1} JX^*J$ on $i_0(M_0)$.
\end{lem}
\begin{proof}
i) As $X$ and $X^*$ are affiliated to $M$ and $\xi_0\in D(X)\cap D(X^*)$, it follows that $M'\xi_0\subset D(X)\cap D(X^*)$ and, a fortiori, that $M_0\xi_0=M'_0\xi_0\subset M'\xi_0\subset D(X)\cap D(X^*)$.\\
ii) Since $M_0\xi_0$ is a core for $(S_0,D(S_0))$, there exists a
sequence $x_n\in M_0$ such that $\|x_n\xi_0-X\xi_0\|\to 0$ and $\|x_n^*\xi_0-X^*\xi_0\|\to 0$. As mentioned at item f) of the introduction of the present section, since $y\in M_0$, the operator $S_0y^*S_0
$ is bounded on $D(S_0)$ and then on $M_0\xi_0\subset D(S_0)$. Thus $x_ny\xi_0\in D(S_0)$ is a Cauchy sequence in $L^2(M)$ as
\[
\begin{split}
\|x_ny\xi_0-x_my\xi_0\|&=\|S_0(y^*x_n^*\xi_0-y^*x_m^*\xi_0)\|=\|S_0y^*S_0(x_n\xi_0-x_m\xi_0)\| \\
&\le \|S_0y^*S_0\|\cdot \|x_n\xi_0-x_m\xi_0\|.
\end{split}
\]
Analogously, $S_0(x_ny\xi_0)=y^*x_n^*\xi_0\in L^2(M)$ is a Cauchy sequence too as
\[
\|S_0(x_ny\xi_0)-S_0(x_my\xi_0)\|=\|y^*x_n^*\xi_0-y^*x_m^*\xi_0\|\le \|y^*\|\cdot \|x_n^*\xi_0-x_m^*\xi_0\|.
\]
Hence $x_ny\xi_0\in D(S_0)$ is a Cauchy sequence in the graph norm of the closed operator $(S_0,D(S_0))$ and the image of $\eta:=\lim_n x_n y\xi_0\in D(S_0)$ is given by  $S_0\eta=\lim_ny^*x_n^*\xi_0=y^*X^*\xi_0$. 
Since $X$ is affiliated to $M$, $X\xi_0-x_n\xi_0\in D(S_0)$ and $S_0y^*S_0$ is bounded on $D(S_0)$, for $z'\in M'$ we have
\[
\begin{split}
(z'\xi_0|Xy\xi_0-x_ny\xi_0)&=(X^*\xi_0-x_n^*\xi_0|yz'^*\xi_0)=(y^*S_0(X\xi_0-x_n\xi_0)|S_0^*(z'\xi_0))\\
&=((S_0y^*S_0)(X\xi_0-x_n\xi_0)|z'\xi_0))\\
&\le \|S_0y^*S_0\|\cdot \|X\xi_0-x_n\xi_0\|\cdot \|z'\xi_0\|.
\end{split}
\]
By the density of $M'\xi_0$ in $L^2(M)$, it follows that $\|Xy\xi_0-x_ny\xi_0\|\le \|S_0y^*S_0\|\cdot \|X\xi_0-x_n\xi_0\|\to 0$ as $n\to\infty$ and we have $Xy\xi_0=\eta\in D(S_0)$ and $S_0(Xy\xi_0)=S_0(\eta)=y^*X^*\xi_0$ for any $y\in M_0$.\\
As $S_0^2$ is the identity operator on $D(S_0)$, from $S_0(X\xi_0)=X^*\xi_0$ it follows that $X^*\xi_0\in D(S_0)$ and $S_0(X^*\xi_0)=X\xi_0$. Thus $(X^*,D(X^*))$ satisfies the same hypotheses as $(X,D(X))$ and the statements involving $(X^*,D(X^*))$ can be deduced from those involving $(X,D(X))$ proved above, by substitution and the fact that the sub-algebra $M_0$ is involutive.
\vskip0.2truecm\noindent
The operator $L_\xi$ is well defined since $\xi\in D(S_0)=D(\Delta_0^{1/2})$ implies $\Delta_0^{1/4}\xi\in D(\Delta_0^{1/4})$ and $J\Delta_0^{1/4}\xi\in JD(\Delta_0^{1/4})=D(\Delta_0^{-1/4})=D(\Delta_0^{1/4}y\Delta_0^{-1/4})$. Since $\xi\in D(S_0)$ implies $S_0\xi\in D(S_0)$, analogous relations imply that $J\Delta_0^{1/4}S_0\xi\in D(\Delta_0^{1/4}y\Delta_0^{-1/4})$ so that $R_\xi$ is well defined too.
\vskip0.2truecm\noindent
{\bf As first step to prove iii)},  we show that $D(\Delta_0^{1/4})$ is a left $M_0$-module, i.e. $y\zeta\in D(\Delta_0^{1/4})$ for any $y\in M_0$ and $\zeta\in D(\Delta_0^{1/4})$ (a fact probably known in literature). Notice first that since $\sigma^{\o_0}_{-i/4}(y)\in M_0$ we have
\[
\sigma^{\o_0}_{-i/4}(y)\xi_0
\in D(\Delta_0^{1/4})=D(\Delta_0^{-1/4}y^*\Delta_0^{1/4}),
\]
which means, in particular, that $y^*\Delta_0^{1/4}\sigma^{\o_0}_{-i/4}(y)\xi_0\in D(\Delta_0^{-1/4})$ and implies
\[
\begin{split}
|\sigma^{\o_0}_{-i/4}(y)|^2\xi_0&=(\sigma^{\o_0}_{-i/4}(y))^*\sigma^{\o_0}_{-i/4}(y)\xi_0=\sigma^{\o_0}_{i/4}(y^*)\sigma^{\o_0}_{-i/4}(y)\xi_0\\
&=\overline{\Delta_0^{-1/4}y^*\Delta_0^{1/4}}\sigma^{\o_0}_{-i/4}(y)\xi_0=\Delta_0^{-1/4}y^*\Delta_0^{1/4}\sigma^{\o_0}_{-i/4}(y)\xi_0\\
&=\Delta_0^{-1/4}y^*\Delta_0^{1/4}\sigma^{\o_0}_{-i/4}(y)\Delta_0^{1/4}\xi_0\\
&=\Delta_0^{-1/4}y^*\sigma^{\o_0}_{-i/4}(\sigma^{\o_0}_{-i/4}(y))\xi_0\\
&=\Delta_0^{-1/4}y^*\sigma^{\o_0}_{-i/2}(y)\xi_0\\
&=\Delta_0^{-1/4}y^*\Delta_0^{1/2}y\xi_0\\
&=\Delta_0^{-1/4}y^*Jy^*J\xi_0.\\
\end{split}
\]
Since $i_0$ is positivity preserving, setting $c:=\|\sigma^{\o_0}_{-i/4}(y)\|^2$, we thus obtain the bound
\[
y^*Jy^*J\xi_0=\Delta_0^{1/4}(|\sigma^{\o_0}_{-i/4}(y)|^2\xi_0)=i_0(|\sigma^{\o_0}_{-i/4}(y)|^2)\le c\cdot \xi_0.
\]
Consider now a sequence $x_n\xi_0\in M\xi_0$ converging to $\zeta\in D(\Delta_0^{1/4})$ in the graph norm of $(\Delta_0^{1/4},D(\Delta_0^{1/4}))$. Then $\lim_n\|yx_n\xi_0-y\zeta\|\le \|y\|\cdot\lim_n\|x_n\xi_0-\zeta\|=0$. Since
$\Delta_0^{^1/4}x_n\xi_0\in L^2(M)$ is a Cauchy sequence, we have $((x_n-x_m)J(x_n-x_m)J\xi_0|\xi_0)=(x_n\xi_0-x_m\xi_0|J(x_n^*-x_m^*)J\xi_0)=(x_n\xi_0-x_m\xi_0|\Delta_0^{1/2}(x_n-x_m)\xi_0)=\|\Delta_0^{1/4}x_n\xi_0-\Delta_0^{1/4}x_m\xi_0\|^2\to 0$ and, by analogous identities, the self-polarity of $L^2_+(M)$ and the bound above, we get $\|\Delta_0^{1/4}yx_n\xi_0-\Delta_0^{1/4}yx_m\xi_0\|^2=((x_n-x_m)J(x_n-x_m)J\xi_0|y^*Jy^*J\xi_0)\le c\cdot ((x_n-x_m)J(x_n-x_m)J\xi_0|\xi_0)\to 0$. Thus $yx_n\xi_0\in D(\Delta_0^{1/4})$ converges in the graph norm of $(\Delta_0^{1/4},D(\Delta_0^{1/4}))$ to $y\zeta\in D(\Delta_0^{1/4})$. The arbitrariness of $y\in M_0$ and $\zeta \in D(\Delta_0^{1/4})$ implies that $D(\Delta_0^{1/4})$ is an $M_0$-module.
\vskip0.2truecm\noindent
Coming back to the proof of iii), notice that, by the identity $J\Delta_0^{1/4}=\Delta_0^{-1/4}J$, one has $J\Delta_0^{1/4}=(J\Delta_0^{1/4})^{-1}$ so that the closed operator $J\Delta_0^{1/4}$ is idempotent on its domain:
\[
(J\Delta_0^{1/4})\xi\in D(\Delta_0^{1/4}),\qquad(J\Delta_0^{1/4})^2\zeta=\zeta,\qquad \forall\,\zeta\in D(\Delta_0^{1/4}).
\]
Thus, for $y\in M_0$ and $\zeta\in D(\Delta_0^{1/4})$, we have $y^*\zeta=y^*(J\Delta_0^{1/4})^2\zeta$ and, since
$y^*\in M_0$ implies $y^*\zeta\in D(\Delta_0^{1/4})$, we have $y^*\zeta=(J\Delta_0^{1/4})^2y^*(J\Delta_0^{1/4})^2\zeta$ too. Applying this identity to $\zeta:=J\Delta_0^{1/4}\eta$ for any $\eta\in D(\Delta_0^{1/4})$, we have $(J\Delta_0^{1/4})^{-1}y^*(J\Delta_0^{1/4})\eta=(J\Delta_0^{1/4})y^*(J\Delta_0^{1/4})^{-1}\eta$, i.e.
\[
\Delta_0^{-1/4}Jy^*J\Delta_0^{1/4}\eta=J\Delta_0^{1/4}y^*\Delta_0^{-1/4}J\eta\qquad \eta\in D(\Delta_0^{1/4}).
\]
Since, by hypotheses, $X\xi_0\in D(\Delta_0^{1/2})$, we may apply the identity to $\eta:=\Delta_0^{1/4}X\xi_0\in D(\Delta_0^{1/4})$, to get
\[
\begin{split}
\Delta_0^{-1/4}Jy^*J\Delta_0^{1/4}(\Delta_0^{1/4}X\xi_0)&=J\Delta_0^{1/4}y^*\Delta_0^{-1/4}J\Delta_0^{1/4}X\xi_0 \\
&=J\sigma^\o_{-i/4}(y^*)J\Delta_0^{1/4}\xi=L_\xi i_0(y).
\end{split}
\]
Since, by ii), $Xy\xi_0\in D(S_0)=D(\Delta_0^{1/2})\subseteq D(\Delta_0^{1/4})$ and $S_0^{-1}=\Delta_0^{-1/2}J$, we have
\[
Xy\xi_0=S_0^{-1}(S_0(Xy\xi_0))=S_0^{-1}(y^*X^*\xi_0)=\Delta_0^{-1/2}Jy^*J\Delta_0^{1/2}X\xi_0
\]
and we conclude the proof of iii) by
\[
\Delta_0^{1/4}Xy\xi_0=\Delta_0^{-1/4}Jy^*J\Delta_0^{1/4}(\Delta_0^{1/4}X\xi_0)=L_\xi i_0(y).
\]
{\bf To prove iv)}, notice first that, since $S_0^2$ is the identity operator on $D(S_0)$ and $X\xi_0\in D(S_0)$, we have $X^*\xi_0=S_0(X\xi_0)\in D(S_0)=D(\Delta_0^{1/2})$, $X\xi_0=S_0(X^*\xi_0)$, $\Delta_0^{1/4}X^*\xi_0\in D(\Delta_0^{1/4})$ and, for all $y\in M$, $J\Delta_0^{1/4}X^*\xi_0\in D(\Delta_0^{-1/4})=D(\Delta_0^{1/4}y\Delta_0^{-1/4})$. Since, as shown above, $yX\xi_0\in D(\Delta_0^{1/4})$ for all $y\in M_0$ as $X\xi_0\in D(\Delta_0^{1/2})\subset D(\Delta_0^{1/4})$ and $J\Delta_0^{1/2}=\Delta_0^{-1/4}J\Delta_0^{1/4}$ as closed operators, we have
\[
\begin{split}
\Delta_0^{1/4}yX\xi_0&=\Delta_0^{1/4}yS_0(X^*\xi_0)=\Delta_0^{1/4}yJ\Delta_0^{1/2}X^*\xi_0=\Delta_0^{1/4}y\Delta_0^{-1/4}J\Delta_0^{1/4}S_0(\xi)\\
&=\sigma^{\o_0}_{-i/4}(y)J\Delta_0^{1/4}S_0(\xi)=R_\xi i_0(y).
\end{split}
\]
{\bf To prove v)}, i.e. that $\overline{L_\xi}$ is affiliated with $M$, let us start to notice that for $z'\in M_0'$ and $y\in M_0$, setting $z:=J\Delta_0^{-1/4}z'^*\Delta_0^{1/4}J=\Delta_0^{1/4}Jz'^*J\Delta_0^{-1/4}=\sigma^{\o_0}_{-1/4}(Jz'^*J)\in M$, since $\Delta_0^{-1/4}J=J\Delta_0^{1/4}$ and $\Delta_0^{-1/2}z'\xi_0\in D(\Delta_0^{1/2})\subset D(\Delta_0^{1/4})=D(J\Delta_0^{1/4})$, we have $z\xi_0=J\Delta_0^{-1/4}z'^*\Delta_0^{1/4}J\xi_0=J\Delta_0^{-1/4}z'^*\xi_0=J\Delta_0^{-1/4}J\Delta_0^{-1/2}z'\xi_0=\\=JJ\Delta_0^{1/4}\Delta_0^{-1/2}z'\xi_0=\Delta_0^{-1/4}z'\xi_0$, $z'\xi_0=\Delta_0^{1/4}z\xi_0=\sigma^{\o_0}_{-i/4}(z)\xi_0$ and
\[
\begin{split}
z'i_0(y)&=z'\Delta_0^{1/4}y\Delta_0^{-1/4}\xi_0=z'\sigma^{\o_0}_{-i/4}(y)\xi_0=\sigma^{\o_0}_{-i/4}(y)z'\xi_0 \\
&=\sigma^{\o_0}_{-i/4}(y)\sigma^{\o_0}_{-i/4}(z)\xi_0=\sigma^{\o_0}_{-i/4}(yz)\xi_0=i_0(yz)
\end{split}
\]
so that $z'i_0(y)\in M_0\xi_0=D(L_\xi)$. Since $\sigma^{\o_0}_{i/4}(z')\in M'$, $y\in M$, $X$ is affiliated to $M$, $y\xi_0\in D(X)$, $yz\xi_0\in D(X)$ by i), using iii) we have
\[
\begin{split}
L_\xi z'i_0(y)=L_\xi i_0(yz)&=\Delta_0^{1/4}Xyz\xi_0\\
&=\Delta_0^{1/4}Xy\Delta_0^{-1/4}z'\xi_0\\
&=\Delta_0^{1/4}Xy\Delta_0^{-1/4}z'\Delta_0^{1/4}\xi_0\\
&=\Delta_0^{1/4}Xy\sigma^{\o_0}_{i/4}(z')\xi_0\\
&=\Delta_0^{1/4}\sigma^{\o_0}_{i/4}(z')Xy\xi_0\\
&=\Delta_0^{1/4}\sigma^{\o_0}_{i/4}(z')\Delta_0^{-1/4}\Delta_0^{1/4}Xy\xi_0\\
&=\sigma^{\o_0}_{-i/4}(\sigma^{\o_0}_{i/4}(z'))L_\xi i_0(y)\\
&=z'L_\xi i_0(y).
\end{split}
\]
Since by Lemma 2.7 $\RR_\eta=J\L_\eta J$, for $\eta\in D(S_0)$ we have  $R_\xi=JL_\xi J$ for $\xi\in D(\Delta_0^{1/2})$. This is equivalent to $(J\oplus J)\mathcal{G}(R_\xi)=\mathcal{G}(L_\xi)$ and implies $(J\oplus J)\mathcal{G}(\overline{R_\xi})=\mathcal{G}(\overline{L_\xi})$, i.e. $\overline{R_\xi}=J\overline{L_\xi} J$ as an identity between densely defined closed operators.\\
To prove vi), notice that, by i) we have $i_0(y)\in D(\Delta_0^{-1/4})$ and
$\Delta_0^{-1/4}i_0(y)=y\xi_0\in D(X)$ and by ii) we have that $X\Delta_0^{-1/4}i_0(y)=Xy\xi_0\in D(S_0)\subseteq D(\Delta_0^{1/4})$.  Hence $i_0(y)\in D(\Delta_0^{1/4}X\Delta_0^{-1/4})$ and $(\Delta_0^{1/4}X\Delta_0^{-1/4})i_0(y)=\\
\Delta_0^{1/4}Xy\xi_0=L_\xi i_0(y)$.\\
To prove vii), notice that, since $y\xi_0\in D(S_0)$ and $S_0=\Delta_0^{-1/4}J\Delta_0^{1/4}$ on $D(S_0)$, we have $y^*\xi_0=S_0(y\xi_0)=\Delta_0^{-1/4}J\Delta_0^{1/4}y\xi_0=\Delta_0^{-1/4}Ji_0(y)$.
By i) and ii), $y^*\xi_0\in D(X^*)$, $X^*y^*\xi_0\in D(S_0)$ and $yX\xi_0=S_0(X^*y^*\xi_0)\in D(S_0)\subset D(\Delta_0^{1/4})$ so that by iv), $R_\xi i_0(y)=\Delta_0^{1/4}yX\xi_0=(\Delta_0^{1/4}S_0)(X^*y^*\xi_0)$. Since $\Delta_0^{1/4}S_0=J\Delta_0^{1/4}$ on $D(S_0)$, we have
\[
R_\xi i_0(y)=(\Delta_0^{1/4}S_0)(X^*y^*\xi_0)=(J\Delta_0^{1/4})(X^*y^*\xi_0)=(J\Delta_0^{1/4}X^*\Delta_0^{-1/4}J)i_0(y)
\]
showing that $J\Delta_0^{1/4}X^*\Delta_0^{-1/4}J$ is densely defined on $i_0(M_0)$ and there it coincides with $R_\xi$.
\vskip0.2truecm\noindent
To prove the first identity in viii), notice that, by the Spectral Theorem, $\xi=X\xi_0$ is an eigenvector of $\Delta_0^{1/4}$ with eigenvalue $\lambda>0$: $\Delta_0^{1/4}X\xi_0=\lambda\cdot X\xi_0$. By the density of $M_0'\xi_0$ in $L^2(M)$ and for all $z'\in M_0'$ we then have
\[
\begin{split}
(z'\xi_0|L_\xi i_0(y))&=(z'\xi_0|\Delta_0^{1/4}Xy\xi_0)\\
&=(\Delta_0^{1/4}z'\Delta_0^{-1/4}\xi_0|Xy\xi_0)\\
&=(\Delta_0^{1/4}z'\Delta_0^{-1/4}X^*\xi_0|y\xi_0)\\
&=(\Delta_0^{1/4}z'\Delta_0^{-1/4}J\Delta_0^{1/2}X\xi_0|y\xi_0)\\
&=\lambda^2\cdot(\Delta_0^{1/4}z'\Delta_0^{-1/4}JX\xi_0|y\xi_0)\\
&=\lambda^2\cdot(\Delta_0^{1/4}z'J\Delta_0^{1/4}X\xi_0|y\xi_0)\\
&=\lambda^2\cdot(z'J\Delta_0^{1/4}X\xi_0|\Delta_0^{1/4}y\xi_0)\\
&=\lambda^2\cdot(z'J\Delta_0^{1/4}X\xi_0|i_0(y))\\
&=\lambda^{3}\cdot(z'JX\xi_0|i_0(y))\\
&=\lambda\cdot(z'X^*\xi_0|i_0(y))\\
&=(z'\xi_0|\lambda Xi_0(y)).
\end{split}
\]
To prove the second identity in viii), we first need to show that the adjoint of the densely defined operator $(\Delta_0^{1/4}X\Delta_0^{-1/4},i_0(M_0))$ (which is closable by ii) and vi)) is an extension of the well defined and densely defined operator  $(\Delta_0^{-1/4}X^*\Delta_0^{1/4},M_0\xi_0)$. By ii), for $x\in M_0$ and since
$i_0(x)=\sigma^{\o_0}_{-i/4}(x)\xi_0\in M_0\xi_0\subset D(X^*)$, we have $JX^*i_0(x)=JX^*JJi_0(x)=JX^*Ji_0(x^*)=\\
JX^*J\sigma^{\o_0}_{-i/4}(x^*)\xi_0$. Since $JX^*J$ is affiliated to $M'$, $\sigma^{\o_0}_{-i/4}(x^*)\in M$ and by ii) $\sigma^{\o_0}_{-1/4}(x^*)\xi_0\in D(JX^*J)$, we have
\[
\begin{split}
JX^*i_0(x)&=\sigma^{\o_0}_{-i/4}(x^*)JX^*J\xi_0=\sigma^{\o_0}_{-i/4}(x^*)JX^*\xi_0
=\sigma^{\o_0}_{-i/4}(x^*)JS_0(X\xi_0) \\
&=\sigma^{\o_0}_{-i/4}(x^*)\Delta_0^{1/2}(X\xi_0).
\end{split}
\]
The hypothesis that $X\xi_0$ is an eigenvalue of $\Delta_0^{1/2}$ then implies $JX^*i_0(x)=\lambda^2\cdot \sigma^{\o_0}_{-i/4}(x^*)X\xi_0$ which in turn, by ii), implies $JX^*i_0(x)=
\lambda^2\cdot S_0(X^*\sigma^{\o_0}_{i/4}(x)\xi_0)\in D(S_0)=D(\Delta_0^{1/2})\subset D(\Delta_0^{1/4})$ so that $X^*\Delta_0^{1/4}x\xi_0=X^*i_0(x)\in JD(\Delta_0^{1/4})=D(\Delta_0^{-1/4})$ and $x\xi_0\in D(\Delta_0^{-1/4}X^*\Delta_0^{1/4})$. For all $y\in M_0$ we may then compute
\[
\begin{split}
(x\xi_0|(\Delta_0^{1/4}X\Delta_0^{-1/4})i_0(y))&=(\Delta_0^{1/4}x\xi_0|(X\Delta_0^{-1/4})i_0(y)) \\
&=(X^*\Delta_0^{1/4}x\xi_0|\Delta_0^{-1/4}i_0(y)) \\
&=((\Delta_0^{-1/4}X^*\Delta_0^{1/4})x\xi_0|i_0(y))
\end{split}
\]
so that $x\xi_0\in D((\Delta_0^{1/4}X\Delta_0^{-1/4})^*)$ and $(\Delta_0^{-1/4}X^*\Delta_0^{1/4})x\xi_0=\\
(\Delta_0^{1/4}X\Delta_0^{-1/4})^*x\xi_0$. Since by vi) and the first identity proved above we have
\[
(\Delta_0^{1/4}X\Delta_0^{-1/4})x\xi_0=L_\xi x\xi_0=\lambda\cdot Xx\xi_0
\]
for all $x\xi_0\in M_0\xi_0=i_0(M_0)=D(L_\xi)\subset D(\Delta_0^{1/4}X\Delta_0^{-1/4})\cap D(X)$, by i) we then have
\[
(\Delta_0^{-1/4}X^*\Delta_0^{1/4})x\xi_0=\lambda X^* x\xi_0\qquad x\xi_0\in M_0\xi_0\subset D(\Delta_0^{-1/4}X^*\Delta_0^{1/4})\cap D(X^*).
\]
{\bf To finalize the proof of the second identity in viii)}, rewrite the eigenvalue equation satisfied by $\xi=X\xi_0$ as $JX^*J\xi_0=\Delta_0^{1/2}X\xi_0=\lambda^2 X\xi_0$ so that, for all $y\in M_0$, we have
\[
\begin{split}
R_\xi i_0(y)&=\Delta_0^{1/4}yX\xi_0\\
&=\lambda^{-2}\cdot \Delta_0^{1/4}y(JX^*J)\xi_0\\
&=\lambda^{-2}\cdot \Delta_0^{1/4}(JX^*J)y\xi_0\\
&=\lambda^{-2}\cdot J(\Delta_0^{-1/4}X^*\Delta_0^{1/4})J\Delta_0^{1/4}y\xi_0\\
&=\lambda^{-2}\cdot J(\Delta_0^{-1/4}X^*\Delta_0^{1/4})Ji_0(y)\\
&=\lambda^{-2}\cdot J(\Delta_0^{-1/4}X^*\Delta_0^{1/4})i_0(y^*)\\
&=\lambda^{-2}\cdot J(\Delta_0^{-1/4}X^*\Delta_0^{1/4})\sigma^{\o_0}_{-i/4}(y^*)\xi_0\\
&=\lambda^{-2}\cdot JX^*\sigma^{\o_0}_{-i/4}(y^*)\xi_0\\
&=\lambda^{-2}\cdot JX^*Ji_0(y).
\end{split}
\]
\end{proof}

\begin{lem}
Let $\xi\in D(S_0)$ be eigenvector of $\Delta_0^{1/2}$ corresponding to the eigenvalue $\lambda^2>0$ and fix a densely defined, closed operator $(X,D(X))$ affiliated to $M$ such that
\[
\xi_0\in D(X)\cap D(X^*),\qquad \xi=X\xi_0,\quad S_0(X\xi_0)=X^*\xi_0.
\]
Then $S_0\xi\in D(S_0)=D(\Delta_0^{1/2})$ is eigenvector of $\Delta_0^{1/2}$ corresponding to the eigenvalue $\lambda^{-2}$.
\end{lem}
\begin{proof}
On one hand we have $S_0\xi=J\Delta_0^{1/2}\xi=\lambda^2\cdot J\xi$. On the other hand, since $JD(\Delta_0^{1/2})=D(\Delta_0^{-1/2})$ and $S_0=S_0^{-1}$ on $D(\Delta_0^{1/2})=D(S_0)=D(S_0^{-1})$ we have $S_0\xi=S_0^{-1}\xi=\Delta_0^{-1/2}J\xi$ so that $\Delta_0^{1/2}S_0\xi=J\xi=\lambda^{-2}\cdot S_0\xi$.
\end{proof}
Combining the results obtained, we have
\begin{cor}
Let $\xi\in D(S_0)$ be an eigenvector of $\Delta_0^{1/2}$ corresponding to the eigenvalue $\lambda^2>0$ and fix a densely defined, closed operator $(X,D(X))$ affiliated to $M$ such that
\[
\xi_0\in D(X)\cap D(X^*),\qquad \xi=X\xi_0,\quad S_0(X\xi_0)=X^*\xi_0.
\]
Then for all $y\in M_0$ we have
\begin{equation}
(L_\xi-R_\xi)i_0(y)=\Delta_0^{1/4}(Xy-yX)\xi_0\\
\end{equation}
and
\begin{equation}
(L_{S_0\xi}-R_{S_0\xi})i_0(y)=\Delta_0^{1/4}(X^*y-yX^*)\xi_0.
\end{equation}
\end{cor}
The commutator $[X,y]:=Xy-yX$ is in general only densely defined if $X$
is affiliated to $M$ but, within the hypotheses assumed at the beginning of this section, the vector $\xi_0$ belongs to the domain of $[X,y]$ and its image $[X,y]\xi_0$ belongs to $D(\Delta_0^{1/4})$. This may justify the notation
\[
i_0([X,y]):=\Delta_0^{1/4}(Xy-yX)\xi_0\qquad y\in M_0.
\]
In the following we will use the notation $j(X^*):=JX^*J$.
\vskip0.1truecm\noindent
Next result shows that the symmetric embedding $i_0$ intertwines the unbounded spatial derivations $\delta_X$, $\delta_{X^*}$ on $M$ with the unbounded bimodule derivations $d^\lambda_X$, $d^{\lambda^{-1}}_{X^*}$ on $L^2(M)$.

\begin{prop} (Bimodule derivations and spatial derivations)
Let $\xi\in D(S_0)$ be eigenvector of $\Delta_0^{1/2}$ corresponding to the eigenvalue $\lambda^2>0$ and fix a densely defined, closed operator $(X,D(X))$ affiliated to $M$ such that
\[
\xi_0\in D(X)\cap D(X^*),\qquad \xi=X\xi_0,\quad S_0(X\xi_0)=X^*\xi_0.
\]
Then, setting $d_X^{\lambda}:=i(\lambda X-\lambda^{-1} j(X^*))$ and $d_{X^*}^{\lambda^{-1}} :=i(\lambda^{-1} X^*-\lambda j(X))$, we have
\[
d_X^{\lambda} i_0(y)= i_0(i[X,y])\qquad d_{X^*}^{\lambda^{-1} } i_0(y)=
i_0(i[X^*,y])\qquad y\in M_0.
\]
Otherwise stated, setting $\delta_X(y):=i[X,y]$ for any $y\in M_0$, on the $^*$-algebra $M_0$ we have
\[
d_X^{\lambda}\circ i_0= i_0\circ \delta_X\qquad d_{X^*}^{\lambda^{-1}}\circ i_0= i_0\circ \delta_{X^*}.
\]
\end{prop}

\begin{proof}
For $y\in M_0$ we have $\sigma^{\o_0}_{-i/4}(y)\in M_0$, $J\sigma^{\o_0}_{-i/4}(y)J\in M_0'$ and
\[
J\sigma^{\o_0}_{-i/4}(y)J\xi_0=J\sigma^{\o_0}_{-i/4}(y)\xi_0=J\Delta_0^{1/4}y\xi_0=Ji_0(y).
\]
Since $X^*$ is affiliated with $M$ and $\xi_0\in D(X^*)$, we have $(J\sigma^{\o_0}_{-i/4}(y)J)\xi_0\in D(X^*)$ and
\[
\begin{split}
j(X^*)i_0(y)&=JX^*Ji_0(y) \\
&=JX^*(J\sigma^{\o_0}_{-i/4}(y)J)\xi_0\\
&=J(J\sigma^{\o_0}_{-i/4}(y)J)X^*\xi_0 \\
&=\sigma^{\o_0}_{-i/4}(y)JX^*\xi_0 \\
&=\sigma^{\o_0}_{-i/4}(y)\Delta_0^{1/2}X\xi_0 \\
&=\Delta_0^{1/4}y\Delta_0^{1/4}X\xi_0 \\
&=\lambda\cdot\Delta_0^{1/4}yX\xi_0.
\end{split}
\]
Since $\lambda Xi_0(y)=L_\xi i_0(y)=\Delta_0^{1/4}Xy\xi_0$ we have too $d_X^\lambda i_0(y):=i(\lambda X-\lambda^{-1} j(X^*))i_0(y)=i\Delta_0^{1/4}(Xy\xi_0-yX\xi_0)=i_0(i[X,y])=i_0(\delta_X(y))$.
The proof of the second identity is similar.
\end{proof}

\begin{thm}
Let $\xi\in D(S_0)$ be an eigenvector of $\Delta_0^{1/2}$ corresponding to the eigenvalue $\lambda^2>0$ and $(X,D(X))$ a densely defined, closed operator affiliated to $M$ such that
\[
\xi_0\in D(X)\cap D(X^*),\qquad \xi=X\xi_0,\quad S_0(\xi)=X^*\xi_0.
\]
Then the completely Dirichlet form $(\E_X^\lambda, \F_X^\lambda)$ constructed above may be represented as
\[
\begin{split}
\E_X^\lambda[i_0(y)]&=\|i_0([X,y])\|^2_{L^2(M)} + \|i_0([X^*,y])\|^2_{L^2(M)}\qquad y\in M_0\\
&=\|i_0(\delta_X(y))\|^2_{L^2(M)} + \|i_0(\delta_{X^*}(y))\|^2_{L^2(M)}
\end{split}
\]
on the $L^2(M)$-dense, $J$-invariant subspace $M_0\xi_0=i_0(M)\subset \tilde \F_X\subset \F^\lambda_X$.
\end{thm}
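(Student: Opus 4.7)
The plan is to observe that the theorem is essentially a direct synthesis of the preceding lemmas, so the task reduces to (a) verifying that $i_0(M_0)$ sits inside the form domain $\F_X^\lambda$ and (b) invoking the bimodule-derivation identity to rewrite each term of $\E_X^\lambda[i_0(y)]$ as the square of the symmetric embedding of a commutator.

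For step (a), recall that by definition $\F_X^\lambda \subseteq D(d_X^{\lambda,\lambda^{-1}}) \cap D(d_{X^*}^{\lambda^{-1},\lambda})$, i.e.\ I need $i_0(y) \in D(X) \cap D(X^*) \cap JD(X) \cap JD(X^*)$ for every $y\in M_0$. The membership $i_0(y)\in D(X)\cap JD(X^*)$ is precisely what the computation of $L_\xi$ and $R_\xi$ in Lemma 2.6 (parts (iii), (iv), and especially (viii)) delivers: there $L_\xi i_0(y) = \lambda X i_0(y)$ and $R_\xi i_0(y) = \lambda^{-1} j(X^*) i_0(y)$. To obtain the two dual inclusions $i_0(y)\in D(X^*)\cap JD(X)$, I would invoke Lemma 2.7, which asserts that $S_0\xi = X^*\xi_0$ is again an eigenvector of $\Delta_0^{1/2}$, now with eigenvalue $\lambda^{-2}$; then the exact same chain of reasoning leading to Lemma 2.6 (viii), but with $X^*$ in place of $X$ and $\lambda^{-1}$ in place of $\lambda$, yields $i_0(y)\in D(X^*)\cap JD(X)$. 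This places $i_0(y)$ in $\F_X^\lambda$.

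For step (b), I would simply quote Lemma 2.8, which gives the two operator identities
\[
d_X^{\lambda}\circ i_0 \;=\; i_0\circ \delta_X, \qquad d_{X^*}^{\lambda^{-1}}\circ i_0 \;=\; i_0\circ \delta_{X^*}
\]
on $M_0$, and substitute directly into the definition
\[
\E_X^\lambda[i_0(y)] \;=\; \|d_X^{\lambda}\, i_0(y)\|^2_{L^2(M)} + \|d_{X^*}^{\lambda^{-1}}\, i_0(y)\|^2_{L^2(M)}.
\]
This yields $\E_X^\lambda[i_0(y)] = \|i_0(\delta_X(y))\|^2 + \|i_0(\delta_{X^*}(y))\|^2$, and since $\delta_X(y)=i[X,y]$, $\delta_{X^*}(y)=i[X^*,y]$, the first displayed equality of the theorem follows from the observation that $i_0$ is linear so the factor of $i$ is absorbed into the norm.

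The only minor subtlety, which I would flag explicitly, is the already-noted interpretation of $i_0([X,y])$ as $\Delta_0^{1/4}(Xy-yX)\xi_0$: even though $[X,y]$ is not a bounded operator and need not act on all of $\xi_0$'s orbit, the hypothesis $\xi_0\in D(X)\cap D(X^*)$ combined with Lemma 2.6 (i)--(ii) ensures both $Xy\xi_0$ and $yX\xi_0$ lie in $D(\Delta_0^{1/4})$, so the symbol $i_0([X,y])$ is unambiguous. No genuine obstacle arises, since all the analytic work (affiliation, domain manipulations, the eigenvalue identity) has already been carried out in Lemmas 2.6--2.8; the present theorem is the packaging of those identities into the form-level statement.
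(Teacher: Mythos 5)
Your proof is correct and takes essentially the approach the paper intends: the theorem is stated without a separate proof precisely because it is meant to be the immediate consequence of Lemma 2.8 together with the domain identifications of Lemma 2.6(viii) (and its twin for $X^*$ via Lemma 2.7). Your explicit verification that $i_0(M_0)\subseteq D(X)\cap D(X^*)\cap JD(X)\cap JD(X^*)$ — so that the pre-closure form can be evaluated at $i_0(y)$ rather than only its closure — is the right point to pin down, and the observation that the scalar $i$ in $\delta_X(y)=i[X,y]$ disappears under the norm is the only remaining bookkeeping; nothing in the paper is done differently.
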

\begin{rem}
These results prove a fortiori that and under the stated assumptions, the form
\[
i_0(y)\mapsto   \lambda^2\|i_0([X,y])\|^2_{L^2(M)} + \|i_0([X^*,y])\|^2_{L^2(M)}
\]
extends to a completely Dirichlet form on $L^2(M)$ with respect to the cyclic vector $\xi_0\in L^2_+(M)$. If $\xi_0$ would be the vector representing a finite, normal, faithful trace state $\o_0$, this result would follow from the general theory relating completely Dirichlet forms and closable bimodule derivations on von Neumann algebras with trace (see \cite{cipsau}).
\end{rem}

\section{Coercivity of Dirichlet forms}

In this section we still keep the assumption that $\xi\in D(S_0)$ is an eigenvector of $\Delta_0^{1/2}$ corresponding to the eigenvalue $\lambda^2>0$ (we still assume $\lambda>0$) and $(X,D(X))$ a densely defined, closed operator affiliated to $M$ such that
\[
\xi_0\in D(X)\cap D(X^*),\qquad \xi=X\xi_0,\quad S_0(\xi)=X^*\xi_0.
\]
We prove below natural lower bounds on the Dirichlet form $(\E_X^\lambda, \F_X^\lambda)$ constructed in Section 2, which lead to coercivity. Recall that $(\E_X^\lambda, \F_X^\lambda)$ is defined as the closure of the
densely defined, $J$-real, closable quadratic form $\tilde\E_X^\lambda:\tilde\F_X\to [0,+\infty)$ on $L^2(M)$ given by
\[
\tilde\E_X^\lambda[\eta]:=\|d_X^\lambda\eta\|^2_{L^2(M)} + \|d_{X^*}^{\lambda^{-1}}\eta\|^2_{L^2(M)}\qquad
\eta\in\tilde\F_X=D(d_X^\lambda)\cap D(d_{X^*}^{\lambda^{-1}}),
\]
where $d_X^{\lambda}:=i(\lambda X-\lambda^{-1} j(X^*))$ and $d_{X^*}^{\lambda^{-1}} :=i(\lambda^{-1} X^*-\lambda j(X))$ are defined on the domain
\[
\tilde\F_X=D(X)\cap D(X^*)\cap J(D(X)\cap D(X^*))
\]
containing the $L^2(M)$-dense, $J$-invariant subspace $i_0(M_0)=M_0\xi_0\subset \tilde\F_X$. Obviously $\tilde\F_X$ is a form core for $(\E^\lambda_X,\F^\lambda_X)$ and on it $\E_X^{\lambda}$ and $\tilde\E_X^\lambda$ coincide.
\vskip0.2truecm\noindent
We start showing an alternative representation of the Dirichlet form.
\begin{thm}
The following representation holds true for the quadratic form $(\E_X^\lambda, \tilde\F_X)$:
\begin{equation}
\begin{split}
\E_X^\lambda[\eta]&=\lambda^2(\|X\eta\|^2+\|XJ\eta\|^2)+\lambda^{-2}(\|X^*\eta\|^2+\|X^*J\eta\|^2)\\
&\quad-2\bigl[(X\eta |JX^*J\eta)+(X^*\eta|JXJ\eta)\bigr]\qquad\qquad\qquad\qquad \eta\in \tilde\F_X.
\end{split}
\end{equation}
\end{thm}
\begin{proof}
In the following, we repeatedly use the fact that if $N\subseteq B(h)$ is a von Neumann algebra acting on a Hilbert space $h$ and $(A,D(A))$, $(B,D(B))$ are densely defined, closed operator on $h$ affiliated to $N$ and $N'$, respectively, then
\[
(A\eta|B\zeta)=(B^*\eta|A^*\zeta)\qquad \eta\in D(A)\cap D(B^*),\quad \zeta\in D(B)\cap D(A^*).
\]
This identity follows directly if $B\in M'$ is bounded since then $B^*\in M'$ and $\eta\in D(A)$ implies $B^*\eta\in D(A)$ and $AB^*\eta=B^*A\eta$ so that $(A\eta|B\zeta)=(B^*A\eta|\zeta)=(AB^*\eta|\zeta)=(B^*\eta|A^*\zeta)$. In general we may approximate $B$ weakly by $B_\varepsilon :=B(I+\varepsilon |B|)^{-1}\in M'$ as $\varepsilon\downarrow 0$.\\
We start the proof of the result setting
\[
d_X:=i(X-j(X^*)),\qquad V_X^\lambda:=i(1-\lambda^{-1})(\lambda X+j(X^*))
\]
and using the splittings
\[
d_X^\lambda=d_X+V_X^\lambda\qquad d_{X^*}^{\lambda^{-1}}=d_{X^*}+V_{X^*}^{\lambda^{-1}},
\]
for any $\eta\in \tilde\F_X$ to have the representation
\begin{equation}
\begin{split}
\E_X^\lambda[\eta]:=&\|d_X\eta\|^2 + \|d_{X^*}\eta\|^2 + \|V_X^\lambda\eta\|^2 + \|V_{X^*}^{\lambda^{-1}}\eta\|^2 +\\
&(d_X\eta| V_X^\lambda\eta) + (V_X^\lambda\eta|d_X\eta) + (d_{X^*}\eta| V_{X^*}^{\lambda^{-1}}\eta) + (V_{X^*}^{\lambda^{-1}}\eta|d_{X^*}\eta).
\end{split}
\end{equation}
Since
\[
\begin{split}
\|d_X\eta\|^2=&\|(X-j(X^*))\eta\|^2=\|X\eta\|^2+\|j(X^*)\eta\|^2 \\
&-[(X\eta|j(X^*)\eta)+(X^*\eta| j(X)\eta)],\\
\|d_{X^*}\eta\|^2=&\|(X^*-j(X))\eta\|^2=\|X^*\eta\|^2+\|j(X)\eta\|^2 \\
&-[(X\eta|j(X^*)\eta)+(X^*\eta| j(X)\eta)]
\end{split}
\]
the sum of the first two addends in (3.2) equals
\begin{equation}
\begin{split}
\|d_X\eta\|^2+\|d_{X^*}\eta\|^2=&\|X\eta\|^2+\|j(X^*)\eta\|^2+\|X^*\eta\|^2+\|j(X)\eta\|^2\\
&-2[(X\eta|j(X^*)\eta)+(X^*\eta| j(X)\eta)].
\end{split}
\end{equation}
Since also
\[
\begin{split}
 \|V_X^\lambda\eta\|^2&=(1-\lambda^{-1})^2((\lambda X+j(X^*))\eta|(\lambda X+j(X^*))\eta)\\
=&(1-\lambda^{-1})^2\bigl[\lambda^2 \|X\eta\|^2+\|j(X^*)\eta\|^2 +\\
&\lambda ((X\eta|j(X^*)\eta)+(X^*\eta|j(X)\eta)\bigr],\\
\|V_{X^*}^{\lambda^{-1}}\eta\|^2&=(1-\lambda)^2((\lambda^{-1} X^*+j(X))\eta|(\lambda^{-1} X^*+j(X))\eta)\\
=&(1-\lambda)^2\bigl[\lambda^{-2} \|X^*\eta\|^2+\|j(X)\eta\|^2+ \\
&\lambda^{-1} ((X^*\eta|j(X)\eta)+(X\eta|j(X^*)\eta)\bigr]
\end{split}
\]
the sum of the third and fourth addends in (3.2) equals
\begin{equation}
\begin{split}
&\|V_X^\lambda\eta\|^2+\|V_{X^*}^{\lambda^{-1}}\eta\|^2= \\
&(\lambda-1)^2(\|X\eta\|^2+\|j(X)\eta\|^2)+(\lambda^{-1}-1)^2(\|X^*\eta\|^2+\|j(X^*)\eta\|^2)+\\
&[(1-\lambda^{-1})^2\lambda +(1-\lambda)^2\lambda^{-1}]((X\eta|j(X^*)\eta)+(X^*\eta|j(X)\eta))=\\
&(\lambda-1)^2(\|X\eta\|^2+\|j(X)\eta\|^2)+(\lambda^{-1}-1)^2(\|X^*\eta\|^2+\|j(X^*)\eta\|^2)+\\
&2(\lambda-1)^2\lambda^{-1}\bigl((X\eta|j(X^*)\eta)+(X^*\eta|j(X)\eta)\bigr).
\end{split}
\end{equation}
Since we have too
\[
\begin{split}
&(d_X\eta| V_X^\lambda\eta) + (V_X^\lambda\eta|d_X\eta)= \\
&(1-\lambda^{-1})\bigl[((X-j(X^*))\eta|(\lambda X+j(X^*))\eta)+((\lambda X+j(X^*))\eta|(X-j(X^*))\eta)\bigr]=\\
&(1-\lambda^{-1})\bigl[\lambda \|X\eta\|^2+(X\eta|j(X^*)\eta)-\lambda (X^*\eta|j(X)\eta)-\|j(X^*)\eta\|^2+\\
&\lambda  \|X\eta\|^2-\lambda (X\eta| j(X^*)\eta)+(X^*\eta|j(X)\eta)-\|j(X^*)\eta\|^2\bigr],
\end{split}
\]
the sum of the fifth and sixth addends in (3.2) equals
\begin{equation}
\begin{split}
&(d_X\eta| V_X^\lambda\eta) + (V_X^\lambda\eta|d_X\eta)=\\
&(1-\lambda^{-1})\bigl[2\lambda \|X\eta\|^2-2\|j(X^*)\eta\|^2+\\
&(1-\lambda)((X\eta|j(X^*)\eta) + (X^*\eta|j(X)\eta))\bigr]
\end{split}
\end{equation}
and, analogously, the sum of the seventh and  eighth addends in (3.2) equals
\begin{equation}
\begin{split}
&(d_{X^*}\eta| V_{X^*}^{\lambda^{-1}}\eta) + (V_{X^*}^{\lambda^{-1}}\eta|d_{X^*}\eta)=\\
&(1-\lambda)\bigl[2\lambda^{-1} \|X^*\eta\|^2-2\|j(X)\eta\|^2+\\
&(1-\lambda^{-1})((X^*\eta|j(X)\eta) + (X\eta|j(X^*)\eta))\bigr].
\end{split}
\end{equation}
By substitution of (3.6), (3.5) and (3.4) in (3.2) we obtain
\[\begin{split}
&\E_X^\lambda[\eta]-\bigl(\|d_X\eta\|^2 + \|d_{X^*}\eta\|^2\bigr)=\\
&(\lambda-1)^2(\|X\eta\|^2+\|j(X)\eta\|^2)+(\lambda^{-1}-1)^2(\|X^*\eta\|^2+\|j(X^*)\eta\|^2)+\\
&2(\lambda-1)^2\lambda^{-1}((X\eta|j(X^*)\eta)+(X^*\eta|j(X)\eta)+\\
&(1-\lambda^{-1})\bigl[2\lambda \|X\eta\|^2-2\|j(X^*)\eta\|^2+\\
&(1-\lambda)((X\eta|j(X^*)\eta) + (X^*\eta|j(X)\eta))\bigr]+\\
&(1-\lambda)\bigl[2\lambda^{-1} \|X^*\eta\|^2-2\|j(X)\eta\|^2+\\
&(1-\lambda^{-1})((X^*\eta|j(X)\eta) + (X\eta|j(X^*)\eta))\bigr]\\
=&\bigl[(\lambda-1)^2+2(\lambda-1)\bigr](\|X\eta\|^2+\|j(X)\eta\|^2)+\\
&\bigl[(\lambda^{-1}-1)^2+2(\lambda^{-1}-1)\bigr](\|X^*\eta\|^2+\|j(X^*)\eta\|^2)+\\
&\bigl[2(1-\lambda)(1-\lambda^{-1})+2(\lambda-1)^2\lambda^{-1}\bigr]((X\eta|j(X^*)\eta)+(X^*\eta|j(X)\eta))\\
=&(\lambda^2-1)(\|X\eta\|^2+\|j(X)\eta\|^2)+(\lambda^{-2}-1)(\|X^*\eta\|^2+\|j(X^*)\eta\|^2)+ \\
&\bigl[2(1-\lambda)(\lambda-1)\lambda^{-1}+2(\lambda-1)^2\lambda^{-1}\bigr]((X\eta|j(X^*)\eta)+(X^*\eta|j(X)\eta))\\
=&(\lambda^2-1)(\|X\eta\|^2+\|j(X)\eta\|^2)+(\lambda^{-2}-1)(\|X^*\eta\|^2+\|j(X^*)\eta\|^2)+\\
&\bigl[-2(\lambda-1)^2\lambda^{-1}+2(\lambda-1)^2\lambda^{-1}\bigr]((X\eta|j(X^*)\eta)+(X^*\eta|j(X)\eta))\\
=&(\lambda^2-1)(\|X\eta\|^2+\|j(X)\eta\|^2)+(\lambda^{-2}-1)(\|X^*\eta\|^2+\|j(X^*)\eta\|^2)
\end{split}
\]
and then, by (3.3), we finally obtain (3.1) for any $\eta\in \tilde\F_X$
\[
\begin{split}
\E_X^\lambda[\eta]=&\bigl(\|d_X\eta\|^2 + \|d_{X^*}\eta\|^2\bigr)+(\lambda^2-1)(\|X\eta\|^2+\|j(X)\eta\|^2)+\\
&(\lambda^{-2}-1)(\|X^*\eta\|^2+\|j(X^*)\eta\|^2)\\
=&\lambda^2(\|X\eta\|^2+\|XJ\eta\|^2)+\lambda^{-2}(\|X^*\eta\|^2+\|X^*J\eta\|^2)-\\
&2\bigl[(X\eta |JX^*J\eta)+(X^*\eta|JXJ\eta)\bigr].
\end{split}
\]
\end{proof}
\begin{cor} (Lower bound)
The following lower bounds hold true for any $\varepsilon,\delta>0$ and any $\eta\in \tilde\F_X$
\begin{equation}
\begin{split}
\E_X^\lambda[\eta]\ge &(\lambda^2-\varepsilon^2)\|X\eta\|^2 + (\lambda^2-\delta^{-2})\|XJ\eta\|^2+(\lambda^{-2}-\delta^2)\|X^*\eta\|^2 +\\
& (\lambda^{-2}-\varepsilon^{-2})\|X^*J\eta|\|^2.
\end{split}
\end{equation}
In particular, for $\varepsilon=\delta=1$ and any $\eta\in \tilde \F_X$ we have
\begin{equation}
\E_X^\lambda[\eta]\ge (\lambda^2-1)\bigl(\|X\eta\|^2 + \|XJ\eta\|^2\bigr)+(\lambda^{-2}-1)\bigl(\|X^*\eta\|^2 + \|X^*J\eta|\|^2\bigr).
\end{equation}
\end{cor}
\begin{proof}
The result follows from (3.1) and the identities, valid for $\varepsilon,\delta>0$,
\[
\varepsilon^2\|X\eta\|^2 +\varepsilon^{-2}\|j(X^*)\eta\|^2-[(X\eta|j(X^*)\eta)+(X^*\eta|j(X)\eta)]=\|d_X^\varepsilon\eta\|^2\ge 0
\]
\[
\delta^2\|X^*\eta\|^2 +\delta^{-2}\|j(X)\eta\|^2-[(X^*\eta|j(X)\eta)+(X\eta|j(X^*)\eta)]=\|d_{X^*}^\delta\eta\|^2\ge 0.
\]
\end{proof}
We address now the problem to find conditions on $(X,D(X))$ sufficient to guarantee that the lower bounds above are coercive for our Dirichlet form. By this we mean bounds in which the Dirichlet form dominates a quadratic form with a certain degree of discreteness of the spectrum such as existence and finite degeneracy of a ground state, spectral gaps or emptiness of essential spectrum.
The conditions will be formulated in terms of relative smallness of the quadratic form of the self-commutator $[X,X^*]$ with respect to the quadratic form of $X^*X$ and they will be exploited in Section 5 when $M$ is a type I$_\infty$ factor.\\
Let us denote by $(t_X,D(t_X))$ and $(t_{X^*},D(t_X^*))$, the densely defined, positive, closed quadratic forms defined as
\[
\begin{split}
t_X[\eta]&:=\|X\eta\|^2\qquad \eta\in D(t_X):=D(X),\\
t_{X^*}[\eta]&:=\|X^*\eta\|^2\qquad \eta\in D(t_{X^*}):=D(X^*),
\end{split}
\]
whose associated positive, self-adjoint operators are $(X^*X,D(X^*X))$ and $(XX^*,D(XX^*))$.\\
Consider also the quadratic form $(\tilde q_X^\lambda,D(\tilde q_X^\lambda))$ given by
\[
\tilde q_X^\lambda[\eta]:=(\lambda^2-1)\|X\eta\|^2+(\lambda^{-2}-1)\|X^*\eta\|^2\qquad \eta\in D(\tilde q_X^\lambda):=D(X)\cap D(X^*).
\]
By the densely defined quadratic form $(q_0, D(q_0))$ defined as
\[
q_0[\eta]:=t_{X^*}[\eta]-t_X[\eta]=\|X^*\eta\|^2-\|X\eta\|^2\qquad \eta\in D(q_0):=D(X)\cap D(X^*),
\]
on $D(\tilde q_X^\lambda)=D(X)\cap D(X^*)$ we can write
\[
\tilde q_X^\lambda=(\lambda-\lambda^{-1})^2\cdot t_X+(\lambda^{-2}-1)\cdot q_0=(\lambda-\lambda^{-1})^2\cdot t_{X^*}+(1-\lambda^2)\cdot q_0
\]
and regard $\tilde q_X^\lambda$ as a perturbation of a multiple of $t_X$ or $t_{X^*}$ by a multiple of $q_0$. Notice that $q_0$ is the form of the self-commutator $[X,X^*]=XX^*-X^* X$,
at least on $D(X^*X)\cap D(XX^*)$.\\
Using the quadratic form $(\tilde{\mathcal{Q}}^\lambda_X,\tilde\F_X)$ given by
\[
\tilde{\mathcal{Q}}^\lambda_X[\eta]:=\tilde q_X^\lambda[\eta]+\tilde q_X^\lambda[J\eta]\qquad \eta\in \tilde\F_X=D(X)\cap D(X^*)\cap J(D(X)\cap D(X^*)),
\]
the lower bound (3.8) can be written as
\begin{equation}
\tilde{\mathcal{Q}}^\lambda_X[\eta]\le \tilde\E^\lambda_X[\eta]\qquad \eta\in \tilde\F_X.
\end{equation}
Although $\tilde{\mathcal{Q}}^\lambda_X$ is densely defined, since $i_0(M_0)=M_0\xi_0\subset \tilde\F_X$ by Lemma 2.8 ii), it is not necessarily lower bounded, closable or a proper functional.
\vskip0.1truecm\noindent
For sake of clarity, we recall some definition we will use concerning lower bounded quadratic forms $(\A,D(\A))$, $(\B,D(\B))$ and their associated self-adjoint operators $(A,D(A))$, $(B,D(B))$ on a Hilbert space $h$ (see \cite{Kato}):\\
i) $(\A,D(\A))$ is {\it $\varepsilon$-bounded w.r.t.} $(\B,D(\B))$ for $\varepsilon>0$, if $D(\B)\subseteq D(\A)$ and $\A[\xi]\le \varepsilon\cdot\B[\xi]+b_\varepsilon\cdot\|\xi\|^2$ for some $b_\varepsilon\ge 0$ and all $\xi\in D(\B)$; the infimum of all such $\varepsilon$ is the {\it form bound} of  $(\A,D(\A))$  w.r.t.  $(\B,D(\B))$;\\
ii) $(\A,D(\A))$ is {\it small (resp. infinitesimally small) w.r.t.} $(\B,D(\B))$ if its form bound is strictly less than one (resp. vanishes);\\
iii) $(A,D(A))$ is {\it $\varepsilon$-bounded w.r.t.} $(B,D(B))$ for $\varepsilon>0$, if $D(B)\subseteq D(A)$ and $\|A\xi\|^2\le\varepsilon\cdot \|B\xi\|^2+b_\varepsilon\cdot \|\xi\|^2$ for some $b_\varepsilon\ge 0$ and all $\xi\in D(B)$; the infimum of all such $\varepsilon$ is the {\it operator bound} of  $(A,D(A))$ w.r.t. $(B,D(B))$;\\
iv) $(A,D(A))$ is {\it small (resp. infinitesimally small) w.r.t.} $(B,D(B))$ if its operator bound is strictly less than one (resp. vanishes);\\
v) $(\A,D(\A))$ is said {\it an infinitesimal perturbation of} $(\B,D(\B))$ if $D(\B)\subseteq D(\A)$ and $(\A-\B,D(\B))$ is infinitesinally small w.r.t. $(\B,D(\B))$;\\
vi) $(A,D(A))$ is said {\it infinitesimally perturbation of} $(B,D(B))$ if $D(B)\subseteq D(A)$ and $(A-B,D(B))$ is infinitesinally small with respect to $(B,D(B))$;\\
It is well known that iii) implies i), iv) implies ii) and vi) implies v);\\
vii) $(A,D(A))$ has {\it purely discrete spectrum} if this is made by discrete eigenvalues only (isolated eigenvalues of finite degeneracy); by the Min-Max Theorem this holds true if and only if $(\A,D(\A))$ is {\it proper} in the sense that $\{\xi\in D(\A): \|\xi\|\le 1,\A[\xi]\le 1\}$ is relatively compact in $h$. 

\begin{thm} (Coercivity)\\
Assume $(\tilde q_X^\lambda,D(X)\cap D(X^*))$ to be lower bounded and closable, denote by $(q^\lambda_X,D(q^\lambda_X))$ its closure and by $(Q^\lambda_X,D(Q^\lambda_X))$ the associated lower bounded, self-adjoint operator. Then
\vskip0.2truecm\noindent
i) $(\tilde{\mathcal{Q}}^\lambda_X,\tilde\F_X)$ is lower bounded, closable and its closure $({\mathcal{Q}}^\lambda_X,D({\mathcal{Q}}^\lambda_X))$ bounds the Dirichlet form
\begin{equation}
{\mathcal{Q}}^\lambda_X[\eta]\le \E^\lambda_X[\eta]\qquad \eta\in \F^\lambda_X\subseteq D({\mathcal{Q}}^\lambda_X);
\end{equation}
if moreover, the self-adjoint operator associated to $({\mathcal{Q}}^\lambda_X,D({\mathcal{Q}}^\lambda_X))$ has discrete spectrum, then the spectrum of the self-adjoint operator
$(H^\lambda_X,D(H^\lambda_X))$ associated to $(\E^\lambda_X,\F^\lambda_X)$ is discrete too.
\vskip0.2truecm\noindent
ii) $(Q^\lambda_X,D(Q^\lambda_X))$ is affiliated to $M$, $(j(Q^\lambda_X),JD(Q^\lambda_X))$ is affiliated to $M'$ and $D(Q^\lambda_X)\cap JD(Q^\lambda_X)$ is dense in $L^2(M)$.
\vskip0.2truecm\noindent
Assume now on $D(X)=D(X^*)$, $D(X^*X)=D(XX^*)$ and the quadratic form $(q_0, D(q_0))$ to be infinitesimally small with respect to $(t_X,D(t_X))$. Then
\vskip0.2truecm\noindent
iii)  the form $(\tilde q_X^\lambda,D(X))$ is lower bounded, closed and $(Q^\lambda_X,D(Q^\lambda_X))$ equals the Friedrichs extension of the lower bounded, densely defined, symmetric operator
\begin{equation}
\begin{split}
D(N^\lambda_X)&:=D(X^*X)=D(XX^*)\\
 N^\lambda_X&:=(\lambda-\lambda^{-1})^2\cdot X^*X+(\lambda^{-2}-1)\cdot[X,X^*];
 \end{split}
\end{equation}
iv) in particular, the conclusions in iii) subsist if the self-commutator $([X,X^*],\\D(X^*X))$ is infinitesimally small w.r.t. $(X^*X,D(X^*X))$ and in this case
\begin{equation}
(Q^\lambda_X,D(Q^\lambda_X))=(N^\lambda_X,D(X^*X)).
\end{equation}
If moreover the spectrum of $(X^*X,D(X^*X))$ is discrete, then the spectrum of the generator $(H^\lambda_X,D(H^\lambda_X))$ of the Dirichlet form is discrete too.

\end{thm}
\begin{proof}
i) Since $(\tilde q_X^\lambda,D(X)\cap D(X^*))$ is lower bounded and closable and $J$ is isometric, the quadratic form $J(D(X)\cap D(X^*))\ni\eta\mapsto\tilde q_X^\lambda[J\eta]$ is densely defined, lower bounded and closable too. This implies that $(\tilde{\mathcal{Q}}^\lambda_X,\tilde\F_X)$ is lower bounded and closable as a sum of forms sharing these same properties. The lower bound (3.10) follows from (3.9) and the lower boundedness of $(\tilde{\mathcal{Q}}^\lambda_X,\tilde\F_X)$. The last assertion concerning discreteness of spectra follows from the Min-Max Theorem.\\
ii) Since $(X,D(X))$ and $(X^*,D(X^*))$ are closed operators affiliated to $M$, it follows that for any unitary $u'\in M'$ we have $u'(D(X)\cap D(X^*))\subset D(X)\cap D(X^*)$ and
$\tilde q_X^\lambda[u'\eta]=\tilde q_X^\lambda[\eta]$ for any $\eta\in D(X)\cap D(X^*)$. By approximation, these invariance still hold true for the closure $(q^\lambda_X,D(q^\lambda_X))$ and implies that for all unitaries $u'\in M'$ and all $\eta\in D(Q^\lambda_X)$ one has $u'\eta\in D(Q^\lambda_X)$ and $Q^\lambda_Xu'\eta=u' Q^\lambda_X\eta$. Hence
$(Q^\lambda_X,D(Q^\lambda_X))$ is affiliated to $M$, $(j(Q^\lambda_X),D(j(Q^\lambda_X)))$ is affiliated to $M'$, the operators strongly commute and have a common dense core.\\
iii) Since $D(X)=D(X^*)$ and $(q_0,D(X)\cap D(X^*))=(q_0,D(X))$ is infinitesimally small with respect to $(t_X,D(X))$, the sum
$\tilde q_X^\lambda=(\lambda-\lambda^{-1})^2\cdot t_X+(\lambda^{-2}-1)\cdot q_0$ is lower bounded and closed since $(t_X,D(X))$ is lower bounded and closed.\\
Since $q_0=t_{X^*}-t_X$ is infinitesimally small with respect to $t_X$ on the common domain $D(X)=D(X^*)$, we have that $t_{X^*}$ is relatively bounded with respect to $t_X$ 
and that $t_{X}$ is relatively bounded with respect to $t_{X^*}$. As $D(X^*X)=D(XX^*)$ by assumption, the symmetric operator $(N^\lambda_X,D(N^\lambda_X))$ is densely defined and lower bounded since its quadratic form is the restriction of the lower bounded form $(\tilde q_X^\lambda,D(X))$ to $D(X^*X)$, i.e. $(\eta|N^\lambda_X\eta)=\tilde q_X^\lambda[\eta]$ for all $\eta\in D(X^*X)$. Since $D(X^*X)$ is form core for $(t_X,D(X))$ and $(\tilde q_X^\lambda,D(X))$ is an infinitesimal perturbation of a multiple of it, $D(X^*X)$ is a form core for
$(\tilde q_X^\lambda,D(X))$ too. Since, by definition, the Friedrichs extension of $(N^\lambda_X,D(X^*X))$ is the self-adjoint operator associated to the closure of its quadratic form $(\tilde q_X^\lambda,D(X^*X))$, it results that $(Q^\lambda_X,D(Q^\lambda_X))$ coincides with it.\\
iv) In this case the operator $(N^\lambda_X,D(N^\lambda_X))$ is an infinitesimal symmetric perturbation of a multiple of the self-adjoint operator $(X^*X,D(X^*X))$ and it is self-adjoint by the Kato-Rellich Theorem. Since it is also lower bounded, it has to coincides with its Friedrichs extension $(Q^\lambda_X,D(Q^\lambda_X))$.\\
To prove the last assertion, recall that the spectrum of a lower bounded self-adjoint operator is discrete if and only if its associated quadratic form is proper (see \cite{d}). Now, by a general corollary of the Min-Max Theorem, if the spectrum of $(\lambda-\lambda^{-1})^2X^*X$ is discrete, then the spectrum of $N^\lambda_X$ is discrete too, as the latter operator is the sum of the former and the lower bounded self-adjoint operator $(\lambda^{-2}-1)[X,X^*]$, all with domain $D(X^*X)$. Hence, the lower bounded, closed quadratic form  $(\tilde q_X^\lambda,D(X))$  of $(N^\lambda_X,D(X^*X))$ is a proper functional and consequently the lower bounded, closed form $({\mathcal{Q}}^\lambda_X,D({\mathcal{Q}}^\lambda_X))$ is proper too, as a sum of proper functionals. The lower bound (3.10) then implies that the Dirichlet form is a proper functional.
\end{proof}
\newpage
\section{Superboundedness of a class of semigroups on\\ type I von Neumann algebras}
In this section we introduce a further continuity property, called {\it superboundedness}, for positivity preserving semigroups on standard forms of $\sigma$-finite von
Neumann algebras, showing that the property is owned by a class of semigroups on type I$_\infty$ factors. Also we show how this property persists under domination of positivity preserving semigroups.
\vskip0.2truecm\noindent
As usual, $i_0:M\to L^2(M)$ denotes the symmetric embedding of a $\sigma$-finite von Neumann algebra $M$ endowed with a faithful normal state $\o_0\in M_{*+}$ represented by $\xi_0\in L^2_+(M)$.
\begin{defn}(Excessive vectors and superboundedness)
i) The vector $\xi_0\in L^2_+(M)$ is $(\gamma_0,t_0)$-{\it excessive} or {\it excessive}, for some $\gamma_0, t_0\ge 0$, with respect to a positivity preserving semigroup $\{T_t:t\ge 0\}$ on $L^2(M)$ if the maps $e^{-\gamma_0 t}T_t$ are Markovian w.r.t. $\xi_0$ for any $t>t_0$.\\
Markovian semigroups are just those for which $\xi_0$ is $(0,0)$-excessive;
\vskip0.2truecm\noindent
ii) a positivity preserving semigroup $\{T_t:t\ge 0\}$ is {\it superbounded} if for some $\gamma_0, t_0\ge 0$
\vskip0.2truecm\noindent
a) $\xi_0\in L^2_+(M)$ is $(\gamma_0,t_0)$-{\it excessive},
\vskip0.2truecm\noindent
b) $T_t(L^2(M))\subseteq i_0(M)$ for all $t>t_0$.
\end{defn}
\noindent
If we endow the subspace $i_0(M)\subseteq L^2(M)$ by the norm of the von Neumann algebra, i.e. $\|i_0(x)\|_M :=\|x\|_M$ for $x\in M$, then superboundedness implies the boundedness of $T_t$ as a map from $(L^2(M),\|\cdot \|_2)$ to $(i_0(M),\|\cdot\|_M)$ for all $t>t_0$. In fact, by the norm continuity of the symmetric embedding $i_0:M\to L^2(M)$, the norm $\|\cdot\|_M$ is stronger than the Hilbert norm $\|\cdot\|_2$ so that the continuous maps $T_t:L^2(M)\to L^2(M)$ are closed when considered from the Hilbert space $L^2(M)$ to the Banach space $(i_0(M),\|\cdot\|_M)$ and,  by the Closed Graph Theorem, they result to be bounded (notice that this involves only condition b) in Definition 4.1).\\
We shall refer to part b) of superboundedness writing $\|T_t\|_{L^2(M)\to M}<+\infty$ for all $t>t_0$ and to part b) of {\it supercontractivity} writing $\|T_t\|_{L^2(M)\to M}\le 1$ for all $t>t_0$.\\
By the Markovianity of $e^{-\gamma_0 t} T_t$ required in i), bounded, positivity preserving maps $S_t:M\to M$ satisfying the relations $i_0(S_t(x))=T_t(i_0(x))$ for $x\in M$ are well defined and one has,
for suitable scalars $b_t\ge 0$,

\[
\|S_t\|\le e^{\gamma_0 t},\qquad \|S_t x\|_M\le b_t\cdot\|i_0(x)\|_{L^2(M)}\qquad x\in M,\quad t>t_0.
\]
\vskip0.2truecm\noindent
Consider the noncommutative spaces $L^p(M,\o_0)$ for $p\in [2,+\infty]$ defined by the symmetric embedding $i_0:M\to L^2(M)$ (see \cite{ko}). By complex interpolation it follows that a superbounded semigroup is {\it hypercontractive} too in the sense that there exists $T_0\ge 0$ such that $T_t$ is bounded from $L^2(M)$ to $L^4(M,\o_0)$ for $t>T_0$.
\vskip0.2truecm
The following observation will be useful later on.

\begin{lem}(Superboundedness by domination)
Let $\{e^{-tG_0}:t\ge 0\}$ be a superbounded semigroup on $L^2(M)$ such that, for some $\gamma_0, t_0\ge 0$
\[
\xi_0\in L^2_+(M)\,\, \text{is $(\gamma_0,t_0)$-excessive}.
\]
Let $\{e^{-tG_1}:t\ge 0\}$ be a $C_0$-continuous, self-adjoint, positivity preserving semigroup such that, for some $\gamma_1, t_1\ge 0$
\[
\xi_0\in L^2_+(M)\,\, \text{is $(\gamma_1,t_1)$-excessive.}
\]
\vskip0.1truecm\noindent
If the semigroup $\{e^{-tG_1}:t\ge 0\}$ is dominated by the semigroup $\{e^{-tG_0}:t\ge 0\}$ in the sense
\begin{equation}
e^{-tG_1}\eta\le e^{-tG_0}\eta\qquad \eta\in L^2_+(M),\quad t\ge 0,
\end{equation}
then $\{e^{-tG_1}:t\ge 0\}$ is superbounded with
\begin{equation}
\|e^{-tG_1}\eta\|_M\le\|e^{-tG_0}\|_{L^2(M)\to M}\cdot\|\eta\|_2,\qquad \eta\in L^2_+(M),\quad t>t_0\vee t_1.
\end{equation}
\end{lem}
\begin{proof}
The superboundedness of $\{e^{-tG_0}:t\ge 0\}$ and the domination (4.1) imply that $e^{-tG_1}(L^2_+(M))\subset i_0(M_+)$ for any $t>t_0\vee t_1$. Since $L^2_+(M)$ linearly generates $L^2(M)$, it follows that $e^{-tG_1}(L^2(M))\subseteq i_0(M)$ for all $t>t_0\vee t_1$ so that $\{e^{-tG_1}:t\ge 0\}$ is superbounded. The bound (4.2) follows from the domination (4.1) and the supeboundedness of $\{e^{-tG_0}:t\ge 0\}$.
\end{proof}

\subsection{A class of superbounded Markovian semigroups on a type I$_\infty$ factor}
Let $h$ be a Hilbert space and consider the type I factor $M:=B(h)$. Its (Hilbert-Schmidt) standard representation acts, by left composition, on the space $L^2(M)=L^2(h)$ of Hilbert-Schmidt operators on $h$, where the standard cone $L^2_+(M)=L^2_+(h)$ is that of operators in $L^2(h)$. The modular involution is given by the operator adjoint: $J\xi:=\xi^*$ for $\xi\in L^2(h)$ and the right representation of $B(h)$ on $L^2(h)$ is given by right composition.
\vskip0.2truecm\noindent
Let $H_0$ be a lower bounded, self-adjoint operator affiliated to $B(h)$ (i.e. any self-adjoint, lower bounded operator on $h$) and consider the strongly continuous semigroup on $L^2(h)$ given by
\[
T_t\eta=e^{-tH_0}J(e^{-tH_0}J(\eta))=e^{-tH_0}\circ\eta\circ e^{-tH_0}\qquad \eta\in L^2(h).
\]
Its self-adjoint generator $G_0$ on $L^2(h)$, defined by $G_0(\xi):=\lim_{t\to 0} t^{-1}(\xi-T_t\xi)$ on the subspace $D(G_0)\subset L^2(h)$ for whose vectors the limit exists, coincides with the generalized sum $H_0\dot{+}JH_0J$ (see \cite{Kato}) of the closed operators $H_0$ and $JH_0J$, affiliated to the commuting von Neumann algebras given by the left and right representations of $B(h)$ on $L^2(h)$ (see Lemma 7.1 in Appendix). The operator $H_0$, resp. $JH_0J$, is considered here as acting on a suitable dense subspace of the Hilbert-Schmidt space $L^2(h)$ by left, resp. right, composition. For example, $G_0(\xi)=\overline{H_0\circ\xi} + \overline{\xi\circ H_0}\in L^2(h)$ for those $\xi\in L^2(h)$ such that the operators $H_0\circ \xi$ and $\xi\circ H_0$ are densely defined, closable and bounded on their domains and their closures are Hilbert-Schmidt operators. To ease notation, the operators $H_0\circ \xi, \xi\circ H_0$ will be represented by the juxtaposition $H_0\xi,\xi H_0$ of the symbols of the operators $H_0$ and $\xi$ so that, the formula above appears $G_0(\xi)=\overline{H_0\xi}+\overline{\xi H_0}$. For further details on Hilbert-Schmidt standard form we refer to \cite{cfl} Section 2.
\begin{lem}
If $H_0$ has discrete spectrum ${\rm Sp}(H_0):=\{\lambda_j:j\in\mathbb{N}\}$\footnote{$\N=\{0,1,\cdots\}$} with the increasing eigenvalues written with repetitions according to the their multiplicity, then
\vskip0.2truecm\noindent
i) $G_0$ has discrete spectrum too given by ${\rm Sp}(G_0):=\{\lambda_j+\lambda_k\in\R:(j,k)\in\mathbb{N}\times\mathbb{N}\}$;\\
ii) if $n_{H_0}(\lambda):=\natural\{j\in\mathbb{N}:\lambda_j\le \lambda\}$ is the eigenvalue counting function of $H_0$, then the eigenvalue counting function of $G_0$ is bounded by $n_{G_0}(\lambda)\le (n_{H_0}(\lambda-\lambda_0))^2$, $\lambda\in\R$.
\end{lem}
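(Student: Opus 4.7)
The plan is to reduce everything to a tensor-product diagonalization of $G_0$ by means of a Hilbert--Schmidt identification. Since $H_0$ is self-adjoint with discrete spectrum and lower bounded, I can pick an orthonormal eigenbasis $\{e_j\}_{j\in\mathbb{N}}$ of $h$ with $H_0 e_j=\lambda_j e_j$, listing eigenvalues with multiplicity and with $\lambda_0=\inf\mathrm{Sp}(H_0)$. Identifying $L^2(h)$ with the Hilbert--Schmidt ideal, the rank-one operators $\xi_{j,k}:=|e_j\rangle\langle e_k|$ form an orthonormal basis.

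Next I observe that each $\xi_{j,k}$ is an eigenvector of the semigroup $T_t$: indeed $T_t\xi_{j,k}=e^{-tH_0}\xi_{j,k}e^{-tH_0}=e^{-t(\lambda_j+\lambda_k)}\xi_{j,k}$, since $e^{-tH_0}e_j=e^{-t\lambda_j}e_j$ on the left and the analogous identity holds on the right. This immediately places $\xi_{j,k}$ in $D(G_0)$ with $G_0\xi_{j,k}=(\lambda_j+\lambda_k)\xi_{j,k}$, which is the rigorous form of the formal computation $G_0\xi=H_0\xi+\xi H_0$. Because the $\xi_{j,k}$'s form an orthonormal basis of $L^2(h)$ consisting of eigenvectors of the self-adjoint operator $G_0$, the spectral theorem yields $\mathrm{Sp}(G_0)=\{\lambda_j+\lambda_k:(j,k)\in\mathbb{N}^{2}\}$ as a pure point spectrum, establishing (i).

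For part (ii), I count eigenvalues: $n_{G_0}(\lambda)$ is at most the number of pairs $(j,k)$ with $\lambda_j+\lambda_k\le\lambda$. Using $\lambda_k\ge\lambda_0$ and $\lambda_j\ge\lambda_0$, each such pair satisfies $\lambda_j\le\lambda-\lambda_0$ and $\lambda_k\le\lambda-\lambda_0$, so the cardinality is bounded by $n_{H_0}(\lambda-\lambda_0)^{2}$. The final assertion $G_0\ge2\lambda_0$ then drops out of (i), since every eigenvalue of $G_0$ is of the form $\lambda_j+\lambda_k\ge 2\lambda_0$.

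There is no serious obstacle here; the only point that demands a sentence of justification is the passage from the semigroup eigenvalue equation $T_t\xi_{j,k}=e^{-t(\lambda_j+\lambda_k)}\xi_{j,k}$ to membership in $D(G_0)$, which follows because on a one-parameter orbit where $T_t\xi=e^{-t\alpha}\xi$ the difference quotient $t^{-1}(\xi-T_t\xi)$ converges in norm to $\alpha\xi$. Everything else is a direct spectral theorem argument on tensor sums of strongly commuting self-adjoint operators with pure point spectra.
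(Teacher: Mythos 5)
Your proof is correct and follows essentially the same approach as the paper: you diagonalize $G_0$ on the orthonormal basis of rank-one Hilbert--Schmidt operators $|e_j\rangle\langle e_k|$ and then count pairs, which is exactly the paper's decomposition $G_0=\sum_{j,k}(\lambda_j+\lambda_k)P_jJP_kJ$ expressed at the level of eigenvectors rather than spectral projections. The only cosmetic difference is that you identify the eigenvectors first for the semigroup $T_t$ and pass to the generator via the difference quotient, whereas the paper verifies the eigenvalue equation directly for $H_0+JH_0J$; both routes are equally valid.
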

\begin{proof}
Let $H_0=\sum_{k=0}^\infty \lambda_k P_k$ be the spectral decomposition of $H_0$ as an operator acting on $h$. Then the spectral decomposition
of $G_0$ is given by
\[
G_0=\sum_{j,k=0}^\infty (\lambda_j+\lambda_k) P_j JP_kJ,
\]
since $\{P_j JP_kJ:j,k\ge 0\}$ is a complete family of mutually orthogonal projections acting on the standard Hilbert space $L^2(h)$ such that
\[
\begin{split}
(H_0+JH_0J)P_jJP_kJ&=H_0P_jJP_kJ+P_jJH_0P_kJ=\lambda_jP_jJP_kJ+\lambda_k P_j P_k J \\
&=(\lambda_j+\lambda_k)P_j JP_kJ.
\end{split}
\]
Thus $G_0$ has the discrete spectrum indicated in the statement and since
$\lambda_j+\lambda_k\le \lambda$ implies both $\lambda_j+\lambda_0\le \lambda$ and $\lambda_0+\lambda_k\le \lambda$, the bound $n_{G_0}(\lambda)\le n_N(\lambda-\lambda_0)^2$ holds true for $\lambda\in\R$.
\end{proof}
Suppose now the lower bounded, self-adjoint operator $H_0$ on $h$ to have a discrete spectrum ${\rm Sp}(H_0):=\{\lambda_j:j\in\mathbb{N}\}$ such that, for some $\beta>0$,
\[
{\rm Tr}(e^{-\beta H_0})=\sum_{k=0}^\infty e^{-\beta\lambda_k}<+\infty,
\]
so that the Gibbs state on $B(h)$ with density matrix $\rho_\beta:=e^{-\beta H_0}/{\rm Tr}(e^{-\beta H_0})$ is well defined
\[
\o_\beta(x):={\rm Tr}(x\rho_\beta)\qquad x\in B(h)
\]
and its representative positive vector is given by $\xi_0:=\rho_\beta^{1/2}\in L^2_+(h)$. Recall that in this case the symmetric embedding $i_0:B(h)\to L^2(h)$ is given by $i_0(x)=\rho_\beta^{1/4}x\rho_\beta^{1/4}$ for $x\in
B(h)$.
\begin{thm}
i) The $C_0$-continuous, self-adjoint semigroup $\{e^{-tG_0}:t>0\}$ is positive preserving and $\xi_0:=\rho_\beta^{1/2}\in L^2_+(h)$ is $(-2(\lambda_0\wedge 0),0)$-excessive;
\vskip0.1truecm\noindent
ii) the semigroup $\{e^{-tG_0}:t>0\}$ is superbounded with
\[
\|e^{-tG_0}\|_{L^2(M)\to M}\le e^{-(2t-\beta/2)\lambda_0}\qquad t>\beta/4.
\]
In particular, if $\lambda_0\ge 0$, the semigroup is Markovian and supercontractive.
\end{thm}
\begin{proof}
Replacing $H_0$ with $H_0+\beta^{-1}\ln {\rm Tr}(e^{-\beta H_0})$, we may just consider the case ${\rm Tr}(e^{-\beta H_0})=1$.
\vskip0.1truecm\noindent
i) If $\xi\in L^2_+(h)$, since $e^{-tH_0}$ is self-adjoint, we have $e^{-tG_0}\xi=(e^{-tH_0})^*\xi e^{-tH_0}\in L^2_+(h)$ for any $t\ge 0$, showing that the semigroup is positivity preserving. Since
$\lambda_0\le H_0$ and $\beta>0$, for any $t\ge 0$ we have
\[
\begin{split}
e^{-tG_0}\xi_0&=e^{-tH_0}\rho_\beta^{1/2}e^{-tH_0}=e^{-\beta H_0/4}e^{-2tH_0}e^{-\beta H_0/4}\\
&\le e^{-2t\lambda_0}e^{-\beta H_0/4}e^{-\beta H_0/4}= e^{-2t\lambda_0}\xi_0
\end{split}
\]
so that $\xi_0$ is $(-2(\lambda_0\wedge 0),0)$-excessive.\\
ii) For $\xi\in L^2(h)$ and $t>\beta/4$ we have $x:=e^{-(t-\beta/4)H_0}\xi e^{-(t-\beta/4)H_0}\in B(h)$
and
\[
\begin{split}
i_0(x)&=\rho_\beta^{1/4}x\rho_\beta^{1/4}=e^{-\beta H_0/4}e^{-(t-\beta/4)H_0}\xi e^{-(t-\beta/4)H_0} e^{-\beta H_0/4} \\
&=e^{-tH_0}\xi e^{-tH_0}=e^{-tG_0}\xi.
\end{split}
\]
Since for $t>\beta/4$ we have $\|e^{-(t-\beta/4)H_0}\|_{B(h)}\le e^{-(t-\beta/4)\lambda_0}$, we get
\[
\begin{split}
\|x\|_{B(h)}&\le\|x\|_{L^2(h)}=\|e^{-(t-\beta/4)H_0}\xi e^{-(t-\beta/4)H_0}\|_{L^2(h)} \\
&\le \|e^{-(t-\beta/4)H_0}\|_{B(h)}\|\xi\|_{L^2(h)}\|e^{-(t-\beta/4)H_0}\|_{B(h)} \\
&\le e^{-(2t-\beta/2)\lambda_0}\cdot \|\xi\|_{L^2(h)}.
\end{split}
\]
\end{proof}

\section{General quantum Ornstein-Uhlenbeck semigroups}
In this section we apply the above framework to construct a family of Dirichlet forms and Markovian semigroups, a special case of which is the quantum Ornstein-Uhlenbeck semigroup studied in \cite{cfl}. While in \cite{cfl} we computed explicitly the spectrum of the generator and proved the Feller property with respect to the algebra of compact operators, here we prove, for each semigroups we construct, subexponential spectral growth rate and domination with respect to positivity preserving semigroups belonging to a natural related class (see Appendix 7.1).
\vskip0.2truecm\noindent
On the Hilbert space $h:=l^2(\mathbb{N})$, consider the C$^*$-algebra of compact operators $\mathcal{K}(h)$. The Number Operator $(N,D(N))$, defined by the natural basis $e:=\{e_k\in l^2(\mathbb{N}):k\in\N\}$ as
\[
D(N):=\Bigl\{\sum_{k\in\N}c_k\cdot e_k:\sum_{k\in \N} k^2\cdot |c_k|^2<+\infty\Bigr\}\qquad Ne_k:=ke_k\qquad k\in\mathbb{N},
\]
generates the $C_0$-continuous group of automorphisms $\alpha:=\{\alpha_t\in {\rm Aut}(\mathcal{K}(h)):t\in\R\}$
\[
\alpha_t(B):=e^{itN}Be^{-itN}\qquad B\in\mathcal{K}(h),\quad t\in \R.
\]
For any $\beta>0$ there exists a unique $(\alpha,\beta)$-KMS state $\o_\beta$, satisfying the KMS condition
\[
\o_\beta (A\alpha_{i\beta}(B))=\o_\beta (BA)
\]
for $\alpha$-analytic elements $A,B$, given by, in terms of the density matrix,
\[
\rho_\beta:=(1-e^{-\beta})e^{-\beta N}=(1-e^{-\beta})\sum_{k\in\mathbb{N}}e^{-\beta k} p_k,\quad \o_\beta(A):={\rm Tr}(A\rho_\beta),\quad A\in \mathcal{K}(h)
\]
($p_k$ being the projection onto $\mathbb{C}e_k$).
The von Neumann algebra $M$ generated by the GNS representation of $\o_\beta$
can be identified with $B(h)$ and the normal extension of $\o_\beta$ on it is still given by the formula above for any $A\in B(h)$. The extension of the automorphisms group $\alpha$ to a $C_0^*$-continuous group on $B(h)$ is given by the same formula above on $\mathcal{K}(h)$.\\
In the Hilbert-Schmidt standard form of $M:=B(h)$ described in Section 4.1, the cyclic and separating vector representing $\o_\beta$ is given by
\[
\xi_0:=\rho_\beta^{1/2}=\sqrt{1-e^{-\beta}}e^{-\beta N/2}\in L^2_+(h).
\]
The action of the Hilbert algebra unbounded conjugation operator $S_0$ on $L^2(h)$, characterized as $S_0(x\xi_0):=x^*\xi_0$ for $x\in B(h)$, can be identified on a suitable domain $D(S_0)\subset L^2(h)$ with
\[
S_0(\eta)=\overline{\rho_\beta^{-1/2}\eta^*\rho_\beta^{1/2}}
\]
and its polar decomposition $S_0=J\Delta_0^{1/2}$ is provided by the modular operator
\[
\Delta_0^{1/2}(\eta)=\overline{\rho_\beta^{1/2}\eta\rho_\beta^{-1/2}}=\overline{e^{-\beta N/2}\eta e^{\beta N/2}}\qquad \eta\in D(S_0).
\]
The modular group of $\o_\beta$, satisfying the modular condition $\o_\beta(A\sigma^{\o_\beta}_{-i}(B))=\o_\beta (BA)$, for analytics elements $A,B$, is then given by $\sigma^{\o_\beta}_t=\alpha_{-\beta t}$ for $t\in\R$.
Regarding the Number Operator $N$ as an operator affiliated to $B(h)$ in its normal representation on $L^2(h)$ (i.e. acting, on a suitable domain of the Hilbert-Schmidt operators, by left composition), we have that the modular (Araki) Hamiltonian is given by the strong sum of the densely defined, self-adjoint operators $N$ and $-JNJ$ (belonging to commuting von Neumann algebras)
\[
-\ln\Delta_0=\beta\overline{N-JNJ}
\]
and its (discrete) spectrum is given by ${\rm Sp}(-\ln\Delta_0)=\beta\mathbb{Z}$. Consequently ${\rm Sp}(\Delta_0^{1/2})=e^{\beta\mathbb{Z}/2}$
with uniform multiplicity one.
\vskip0.2truecm\noindent
Let us consider the annihilation and creation operators $(A,D(A)), (A^*,D(A^*))$ on $h$, defined on the domain $D(A):=D(\sqrt N)=:D(A^*)$ as
\[
Ae_0:=0,\quad Ae_k:=\sqrt{k}e_{k-1}\quad\text{if $k\ge 1$},\qquad A^*e_k:=\sqrt{k+1}e_{k+1}\qquad k\in\N.
\]
They satisfy the Canonical Commutation Relations $AA^*=A^* A+I$, as closed operators defined on $D(N$), and allow to represent the Number Operator as $N=A^*A$. All these operators and their functional calculi are understood as affiliated to $B(h)$ acting by left composition on operators belonging to the Hilbert-Schmidt class $L^2(h)$.
\vskip0.2truecm\noindent
Let us consider the family of operators affiliated to $B(h)$
\begin{equation}
D(X_m)=D(N^{m/2})\qquad X_m:=(A^*)^m\qquad m\in\mathbb{N}\setminus\{0\}.
\end{equation}
\begin{lem}
i) For any $m\ge 1$ and $\lambda_m^2:=e^{-m\beta/2}$ we have
\begin{equation}
\begin{split}
D(X_m)=&D(X_m^*)=D(N^{m/2}),\quad D(X_m^*X_m)= D(X_mX_m^*)=D(N^m)\\
X_{m}^*X_{m}&=A^{m}(A^*)^{m}=(N+m)(N+m-1)\cdots (N+2)(N+1)\\
X_mX_m^*&= (A^*)^{m}A^{m} = N(N-1)(N-2)\cdots (N-(m-1))
 \end{split}
\end{equation}
and the self-commutator $([X_m,X^*_m], D(N^{m}))$ is a self-adjoint operator, infinitesimally small with respect to $(X_m^*X_m,D(N^m))$ and $(N^m,D(N^m))$;\\
ii) $X_m\xi_0=(A^*)^m\xi_0\in L^2(h)$ is an eigenvector of $\Delta_0^{1/2}$ corresponding to the eigenvalue $\lambda_{m}^{2}$, \\
$X_m^*\xi_0=(A)^m\xi_0\in L^2(h)$ is an eigenvector of $\Delta_0^{1/2}$ corresponding to the eigenvalue $\lambda_{m}^{-2}$.
\end{lem}
\begin{proof}
i) Formulae (5.2) follow by induction starting from the case $m=1$. They show that the self-commutator is a polynomial in $N$ of degree $(m-1)$ and this implies the remaining conclusion. ii) Since $A^*e_k:=\sqrt{k+1}e_{k+1}$  for $k\in\mathbb{N}$, we have $(A^*\xi_0)e_k=A^*(\rho_\beta^{1/2}(e_k))=(1-e^{-\beta})^{1/2}e^{-\beta k/2}A^*e_k=(1-e^{-\beta})^{1/2}e^{-\beta k/2}\sqrt{k+1}e_{k+1}$
and then for, any $m\in\mathbb{N}$, we have too $((A^*)^m\xi_0)e_k=(1-e^{-\beta})^{1/2}e^{-\beta k/2}(A^*)^m e_k=(1-e^{-\beta})^{1/2}e^{-\beta k/2}\sqrt{k+1}\cdots \sqrt{k+m}\,e_{k+m}$ so that
\[
\begin{split}
(\Delta_0^{1/2}((A^*)^m\xi_0))e_k&=(\rho_\beta^{1/2} ((A^*)^m\rho_\beta^{1/2})\rho_\beta^{-1/2})e_k=\rho_\beta^{1/2}((A^*)^me_k)\\
&=\sqrt{k+1}\cdots\sqrt{k+m}\,\rho_\beta^{1/2}e_{k+m}\\
&=(1-e^{-\beta})^{1/2}e^{-\beta (k+m)/2}\sqrt{k+1}\cdots\sqrt{k+m}\,e_{k+m}\\
&=e^{-m\beta/2}((A^*)^m\xi_0)e_k.
\end{split}
\]
Hence $(A^*)^m\xi_0\in L^2(h)$ is eigenvector of $\Delta_0^{1/2}$ corresponding to the eigenvalue $\lambda_m^2:=e^{-m\beta/2}$. The other series of eigenvalues follow from Lemma 2.9.
\end{proof}
\noindent
We are now in position to apply Theorem 2.5 with $Y=X_m$, $\lambda=e^{-m\beta/4}$, $\xi_0=\rho_\beta^{1/2}\in L^2(l^2(\N))$\\ and consider the Dirichlet form
$(\E^{\lambda_m}_{X_m},\F^{\lambda_m}_{X_m})$ on $L^2(l^2(\N))$ and its generator $(H^{\lambda_m}_{X_m},D(H^{\lambda_m}_{X_m}))$.\\
The following result generalizes, in particular, some of those obtained in \cite{cfl} for the quantum Ornstein-Uhlebeck semigroup, corresponding to the present parameter $m=1$.
\begin{thm} (Spectral growth rate)
For $m\ge 1$ and $\lambda_m^2:=e^{-m\beta/2}$, the operator $(H^{\lambda_m}_{X_m},D(H^{\lambda_m}_{X_m}))$ has discrete spectrum and subexponential spectral growth rate
\[
{\rm Tr}(e^{-tH_{X_m}^{\lambda_{m}}})<+\infty\qquad t>0.
\]
\end{thm}
\begin{proof}
By Lemma 5.1 i) above, the self-adjoint operator
\[
N_{X_m}^{\lambda_{m}}:=(\lambda_m-\lambda_m^{-1})^2 X_m^* X_m + (\lambda_m^{-2}-1)[X_m, X_m^*]
\]
has, on its domain, the following explicit form
\begin{equation}
\begin{split}
N_{X_m}^{\lambda_{m}}&=(\lambda_m^2-1)A^m (A^*)^m + (\lambda_m^{-2}-1)(A^*)^m A^m\\
&=(\lambda_m^2-1)(N+1)\cdots (N+m) + (\lambda_m^{-2}-1)N(N-1)\cdots (N-(m-1)) \\
&= (\lambda_m^2+\lambda_m^{-2}-2)N^m +p_{m-1}(N)= (\lambda_m-\lambda_m^{-1})^2N^m +p_{m-1}(N),
\end{split}
\end{equation}
where $p_{m-1}:\R\to\R$ is a suitable polynomial of degree $(m-1)$ with real coefficients. Since for any $\varepsilon\in (0,1)$ one has $b_m^\varepsilon:=\inf_{s\ge 0}(\varepsilon (\lambda_m-\lambda_m^{-1})^2s^m+p_{m-1}(s))>-\infty$ and $(\lambda_m-\lambda_m^{-1})^2=(2\sinh(m\beta/4))^2>0$,
$(N_{X_m}^{\lambda_{m}},D(N^m))$ is lower bounded, self-adjoint with subexponential spectral growth rate
\[
\begin{split}
{\rm Tr}(e^{-tN_{X_m}^{\lambda_{m}}})&\le e^{-tb_m^\varepsilon}{\rm Tr}(e^{-t(1-\varepsilon)(\lambda_m-\lambda_m^{-1})^2N^m}) \\
&=e^{-tb_m^\varepsilon}\sum_{k=1}^\infty  e^{-t(1-\varepsilon)(\lambda_m-\lambda_m^{-1})^2k^m},\qquad t>0,
\end{split}
\]
by \cite{r} Proposition 1.2.15. Applying Lemma 4.3, these same properties (having discrete spectrum and sub-exponential spectral growth rate) hold true for the sum $N_{X_m}^{\lambda_{m}}\dot{+}j(N_{X_m}^{\lambda_{m}})$. Also, since $D(X_m)=D(X_m^*)=D(N^{m/2})$ and $D(X_m^*X_m)=D(X_mX_m^*)=D(N^m)$, by Theorem 3.3 iii) and iv) with the notations there introduced, we deduce spectrum discreteness and growth rate for $H_{X_m}^{\lambda_m}$ too:
\[
\begin{split}
{\rm Tr}(e^{-tH_{X_m}^{\lambda_{m}}})&\le{\rm Tr}(e^{-t(N_{X_m}^{\lambda_{m}}\dot{+}j(N_{X_m}^{\lambda_{m}})})= {\rm Tr}(e^{-tN_{X_m}^{\lambda_{m}}}Je^{-tN_{X_m}^{\lambda_{m}}}J)\\
&=\Bigl({\rm Tr}(e^{-tN_{X_m}^{\lambda_{m}}})\bigr)^2,\qquad t>0.
\end{split}
\]
\end{proof}

\begin{thm}(Domination)
For $m\ge 1$ and $\lambda_m^2:=e^{-m\beta/2}$, the following properties hold:
\vskip0.1truecm\noindent
the Markovian semigroup $\{e^{-tH_{X_m}^{\lambda_{m}}}:t\ge 0\}$, associated to the Dirichlet form $(\E^{\lambda_m}_{X_m},\F^{\lambda_m}_{X_m})$ dominates the
Markovian semigroup $\{e^{-tG_1}:t\ge 0\}$, generated  by the closed, self-adjoint operator $(G_1,D(G_1))$ on $L^2(h)$ given by
\[
\begin{split}
D(G_1)&:=D(N^m)\cap JD(N^m)\\
G_1&:=(\lambda_m^2\cdot X_m^*X_m + \lambda_m^{-2}\cdot X_mX_m^*) \dot{+} j(\lambda_m^2\cdot X_m^*X_m + \lambda_m^{-2}\cdot X_mX_m^*),
\end{split}
\]
which can be expressed as
\[
e^{-tG_1}(\eta)=e^{-tB_m}j(e^{-tB_m})(\eta)=e^{-tB_m}\eta e^{-tB_m}\qquad \eta\in L^2(M),
\]
by the self-adjoint, positive operator $(B_m,D(B_m)):=(\lambda_m^2\cdot X_m^*X_m + \lambda_m^{-2}\cdot X_mX_m^*,D(N^m))$, affiliated to $M:=B(h)$ in its left action on $L^2(M)=L^2(h)$.

\end{thm}
\begin{proof}
Set $(q_0,D(q_0)):=(\E^{\lambda_m}_{X_m},\F^{\lambda_m}_{X_m})$ and consider the forms $(q_1,D(q_1))$, $(w,D(w))$ given by $D(q_1):=\tilde\F_{X_m}=:D(w)$ and
\[
\begin{split}
q_1[\eta]&:=\lambda_m^2(\|X_m\eta\|^2+\|X_mJ\eta\|^2)) + \lambda_m^{-2}(\|X_m^*\eta\|^2+\|X_m^*J\eta\|^2),\\
w[\eta]&:=2[(X_m\eta |JX_m^*J\eta)+(X_m^*\eta|JX_mJ\eta)],
\end{split}
\]
so that on $D(q_1):=\tilde\F_{X_m}$, the representation (3.1) of the form $(\E^{\lambda_m}_{X_m},\F^{\lambda_m}_{X_m})$ can be written as
\[
q_1[\eta]=q_0[\eta]+w[\eta]\qquad \eta\in D(q_1).
\]
As, by definition, $(q_0,D(q_0))$ is a Dirichlet form, its associated self-adjoint operator
\[
(G_0,D(G_0)):=(H^{\lambda_m}_{X_m},D(H^{\lambda_m}_{X_m}))
\]
generates a Markovian, hence a $C_0$-continuous, self-adjoint, positivity preserving, semigroup $\{e^{-tG_0}:t\ge 0\}$. Since, by definition (see statement and proof of Theorem 2.5) and (5.1), $\tilde\F_{X_m}=D(X_m)\cap D(X_m^*)\cap J(D(X_m)\cap D(X_m^*))=D(N^{m/2})\cap JD(N^{m/2})$, the quadratic form $(q_1,D(q_1))$ is closed and the associated self-adjoint operator is just
$(G_1,D(G_1))$. Since $\{e^{-tG_1}:t\ge 0\}$ is positivity preserving (see Appendix 7.1), to apply Lemma 4.2, we exploit the characterization of domination between positivity preserving semigroups on standard forms of von Neumann algebras, established in \cite{ACS} Theorem 3.1: the semigroup $\{e^{-tG_1}:t\ge 0\}$ is dominated by $\{e^{-tG_0}:t\ge 0\}$ if and only if each one of the following properties is verified:
\vskip0.1truecm\noindent
a) $D(q_1)\subseteq D(q_0)$,
\vskip0.1truecm\noindent
b) $q_0(\eta|\zeta)\le q_1(\eta|\zeta)$ for all $\eta,\zeta\in D(q_1)\cap L^2_+(M)$,
\vskip0.1truecm\noindent
c) if $\eta\in D(q_0)\cap L^2_+(M)$, $\zeta\in D(q_1)\cap L^2_+(M)$ and $\eta\le\zeta$, then $\eta\in D(q_1)$.
\vskip0.1truecm\noindent
Condition a) holds true since $D(q_1):={\tilde\F}_{X_m}\subseteq \F^{\lambda_m}_{X_m}=:D(q_0)$. To prove b), consider the set $C(e)\subseteq L^2(h)$ of all Hilbert-Schmidt operators which are finite linear combination of the partial isometries $\{e_j\otimes e_k^*:j,k\in\N\}$ of the natural basis $e:=\{e_k\in h:k\in\N\}$ and set $C_+(e):=C(e)\cap L^2_+(h)$, $C_\R(e):=C(e)\cap L^2_\R(h)$, where $L^2_\R(h)=L^2_+(h)-L^2_+(h)$ is the self-adjoint part of $L^2(h)$. Since $\{e_j\otimes e_k^*:j,k\in\N\}$ is a Hilbert basis for $L^2(h)$, $C(e)$ is dense in $L^2(h)$. For $A\in L^2_\R(h)$ and $B\in C(e)$ we have $(B^*+B)/2\in C_\R(e)$ and $\|A-(B^*+B)/2\|_2\le \|A-B\|_2$ so that $C_\R(e)$ is dense in $L^2_\R(h)$. For any $B\in C_\R(e)$ we have $B_+\in C_+(e)$ since $C_\R(e)=\bigcup_{j\in\N}L^2_\R(h_j)$, where $h_j:={\rm Lin}\{e_k\in h:k=0,\ldots ,j\}$, and if $B\in L^2_\R(h_j)$  for some $j\in \N$, then $B_+\in L^2_\R(h_j)$. Since the Hilbertian projection of $L^2_\R(h)$ onto $L^2_+(h)$ is a contraction, for any $A\in L^2_+(h)$ and $B\in C_\R(e)$ we have $\|A-B_+\|_2=\|A_+ -B_+\|_2\le \|A-B\|_2$ showing that the cone $C_+(e)$ is dense in the positive cone $L^2_+(h)$.\\
It follows from Lemma 5.1 that $C(e)$ is a $J$-invariant core for $(X_m,D(X_m))$ and $(X_m^*,D(X_m^*))$ which is left globally invariant by both operators: $X_m(C(e))\\\subseteq C(e)$, $X_m^*(C(e))\subseteq C(e)$. Let $P_j$ the finite rank projection on $h$
with range $h_j$,
for any $j\in\N$. Then if $\eta,\zeta\in C_+(e)$ then $X_m\zeta=P_jX_mP_k\zeta$ and $X_m^*\eta=P_jX_m^*P_k\eta$ for sufficiently large  $j,k\in \N$. Since $P_{j+m}X_m P_j\in B(h)$, $(P_j X_m P_{j+m})J(P_j X_m P_{j+m})J$ is positivity preserving and we have
\[
(X_m^*\eta|JX_mJ\zeta)=(\eta|(P_j X_m P_{j+m})J(P_j X_m P_{j+m})J\zeta)\ge 0.
\]
By the core property, the positivity of $(X_m^*\eta|JX_mJ\zeta)$ extends to any $\eta,\zeta\in D(X_m^*)\cap JD(X_m)$ and an analogous reasoning shows that $(X_m\eta|JX_m^*J\zeta)\ge 0$
is true for any $\eta,\zeta\in D(X_m)\cap JD(X_m^*)$. Since $D(q_1)=\tilde\F_{X_m}$, altogether these properties allows to check b) as follows for $\eta,\zeta\in D(q_1)\cap L^2_+(M)$
\[
q_1(\eta|\zeta)-q_0(\eta|\zeta)=w(\eta|\zeta)=2[(X_m\eta |JX_m^*J\zeta)+(X_m^*\eta|JX_mJ\zeta)].
\]
To check c), since $D(q_1):=\tilde\F_{X_m}$ is core for $(\E^{\lambda_m}_{X_m},\F^{\lambda_m}_{X_m})$, let $\eta_n\in D(q_1)$ be a sequence such that
\[
\lim_n \bigl(q_0[\eta_n-\eta]+\|\eta_n-\eta\|_2^2]\bigr)=0.
\]
Let $\eta_n\wedge\zeta:={\rm Proj}(\eta_n,\zeta-L^2_+(M))$ be the Hilbert projection of $\eta_n\in L^2_+(M)$ onto the closed and convex set $\zeta-L^2_+(M)\subset L^2_\R(M)$. Since, by Lemma 4.4 in \cite{c1}, we have $\eta_n\wedge\zeta=\zeta\wedge\eta_n=\eta_n-(\zeta-\eta_n)_-$, the continuity of the Hilbert projections and the fact that
$\eta\le\zeta$, imply
\[
\lim_n\|\eta-\eta_n\wedge\zeta\|_2=\lim_n\|\eta-\eta_n+(\zeta-\eta_n)_-\|_2=\|(\zeta-\eta)_-\|_2=0.
\]
Since $\{e^{-tG_0}:t\ge 0\}$ and $\{e^{-tG_1}:t\ge 0\}$ are positivity preserving, by Proposition 4.5 iii) in \cite{c1} we have
\[
\eta_n\wedge\zeta\in D(q_1),\qquad q_0[\eta_n\wedge\zeta]\le q_0[\eta_n\wedge\zeta]+q_0[\eta_n\vee\zeta]\leq q_0[\eta_n]+q_0[\zeta].
\]
Since $\eta_n\wedge\zeta,\zeta\in D(q_1)$ and, by definition, $\eta_n\wedge\zeta\le\zeta$, we have also (using the property of the quadratic form $w$ established in the proof of b)) 
$w[\eta_n\wedge\zeta]\le w[\zeta]$ so that
\[
q_1[\eta_n\wedge\zeta]=q_0[\eta_n\wedge\zeta]+w[\eta_n\wedge\zeta]\le q_0[\eta_n]+q_0[\zeta]+w[\zeta]=q_0[\eta_n]+q_1[\zeta].
\]
Since the quadratic form $(q_1,D(q_1))$ is closed on $L^2(M)$, it is lower semicontinuous when considered as a functional on $L^2(M)$ taking values in the extended positive half-line $[0,+\infty]$ and it is finite exactly on $D(q_1)$. We then have
\[
q_1[\eta]\le\liminf_n q_1[\eta_n\wedge\zeta]\le\liminf_n\bigl(q_0[\eta_n]+q_1[\zeta]\bigr)=q_0[\eta]+q_1[\zeta]<+\infty
\]
so that $\eta\in D(q_1)$. By \cite{ACS} Theorem 3.1, $\{e^{-tG_1}:t\ge 0\}$ is dominated by $\{e^{-tG_0}:t\ge 0\}$: $e^{-tG_1}\eta\le e^{-tH^{\lambda_m}_{X_m}}\eta$ for all $\eta\in L^2_+(M)$ and $t\ge 0$. Choosing
$\eta:=\xi_0$ one has $e^{-tG_1}\xi_0\le e^{-tH^{\lambda_m}_{X_m}}\xi_0\le\xi_0$ for all $t\ge 0$ so that $\{e^{-tG_1}:t\ge 0\}$ is Markovian.
\end{proof}

\section{Dirichlet forms associated to deformations of the CCR relations}
In this section we outline the construction of Dirichlet forms associated to deformations of annihilation and creation operators in the framework and notations of Section 5. To use the tools of Section 2 to this end, we need to represent eigenvectors of (isolated) eigenvalues of the Araki Hamiltonian as in Lemma 2.1.
\subsection{Deformation of the CCR relations}
Let $g:\R\to\R$ be a function vanishing on $(-\infty,0]$, strictly increasing on $[0,+\infty)$ and satisfying, for $\beta >0$ and $\ell\in\N$ to be fixed later,
\begin{equation}
\sum_{n=0}^\infty n^{\ell} e^{-\beta g(n)}<+\infty.
\end{equation}
Consider the automorphisms group of the $C^*$-algebra of compact operators
\[
\alpha_t(B):=e^{itg(N)}Be^{-itg(N)}\qquad t\in\R,\quad B\in \mathcal{K}(h)
\]
whose Gibbs equilibrium state $\o_\beta(\cdot)={\rm Tr}(\cdot\rho_\beta)$ is represented by the density matrix $\rho_\beta:=e^{-\beta g(N)}/Z(\beta)$ with partition function $Z(\beta):={\rm Tr}(e^{-\beta g(N)})$. Let $\xi_0:=\rho_\beta^{1/2}\in L^2_+(h)$ be the cyclic vector giving rise to the modular group of the normal extension of $\omega_\beta$ to $B(h)$
\[
\sigma_t^{\o_\beta}(B)=\alpha_{-t\beta}(B)=e^{-it\beta g(N)}Be^{it\beta g(N)}\qquad B\in B(h),\quad t\in\R.
\]
Then $\Delta_0^{it}(\eta)=\rho_\beta^{it}\eta\rho_\beta^{-it}$ for all $\eta\in L^2(h)$ and
the Araki Hamiltonian is the strong sum
\[
\ln\Delta_0=-\beta\overline{g(N)-j(g(N))}.
\]
Since for each $m,n\in\N$, $\nu_{m,n}:=\beta(g(m)-g(n))$ is an eigenvalue of $\ln\Delta_0$ with eigenvector $e_m\otimes e_n^*\in L^2(h)$ and $\{e_m\otimes e_n^*:m,n\in\N\}$ is a Hilbert basis, the spectrum of $\ln\Delta_0$ is
\[
sp(\ln\Delta_0)=\overline{\{\nu_{m,n}:m,n\in\N\}}.
\]
All eigenvalues are isolated if, for example,
\[
\liminf_{m>n\ge 0}\frac{g(m)-g(n)}{m-n}>0.
\]
\begin{prop}
Suppose $\nu:=\nu_{m,n}\ge 0$ to be an isolated eigenvalue of the Araki Hamiltonian with $m\ge n$, set $\ell:=m-n\in\N$ and let $f\in C^\infty_0(\R)$ be a Schwartz function whose Fourier Transform\footnote{Fourier transform convention: ${\hat f}(s):=\int_\R dt f(t)e^{ist}$.} $\hat f\in C^\infty_0(\R)$ is supported by $[\nu-\varepsilon,\nu+\varepsilon]$ and is strictly positive on $(\nu-\varepsilon,\nu+\varepsilon)$, with ${\hat f}(\nu)=1$, for
\[
0<\varepsilon < {\rm dist}(\nu,sp(\ln\Delta_0)\setminus\{\nu\}).
\]
Then, setting $k(t):=\beta(g(t+\ell))-g(t))$ and $p(t):={\hat f}(k(t))$ for $t\in\R$, we have
\vskip0.1truecm\noindent
i) $sp(k(N))\subset sp(\ln\Delta_0)$ and $\nu\in sp(k(N))$ is an isolated eigenvalue of $k(N)$ acting on $h$;
\vskip0.1truecm\noindent
ii) $p(N)$ is the spectral projection of $N$ corresponding to the Borel set
\[
B:=\{n'\in\N: g(m)-g(n)=g(n'+\ell)-g(n')\}\subseteq sp(N)
\]
and $p(N-\ell\cdot I)$ is the spectral projection of $N$ corresponding to $B+\ell\subseteq\N$;
\vskip0.1truecm\noindent
iii) the densely defined, closed operator $(X,D(X))$ on $h$, given by
\begin{equation}
D(X):=D(N^{\ell/2})\qquad X:=p(N)\circ A^\ell,
\end{equation}
where $A$ is the annihilation operator defined in Section 5, satisfies the relations
\begin{equation}
\begin{split}
XX^*&=(N+1)\cdots (N+\ell\cdot I)p(N)\\
X^*X&=N(N-I)\cdots (N-(\ell -1)\cdot I)p(N-\ell\cdot I)\\
[X,X^* ]=&(N+1)\cdots (N+\ell\cdot I)p(N)- \\
&N(N-I)\cdots (N-(\ell -1)\cdot I)p(N-\ell\cdot I);
\end{split}
\end{equation}
iv) if $B$ is unbounded, $(X^*X,D(N^\ell))$ and $(XX^*,D(N^\ell))$ are unbounded with discrete spectra;
\vskip0.1truecm\noindent
v) if $B$ and $B+\ell$ differ by a finite set, then $([X,X^*],D(N^\ell))$ is infinitesimally small with respect to $(N^\ell,D(N^\ell))$;
\vskip0.1truecm\noindent
vi) $\xi:=X\xi_0\in L^2(h)$ is an eigenvector of $\ln\Delta_0$ with eigenvalue $\nu$:
\[
(\ln\Delta_0)\xi=\nu\cdot\xi.
\]
\end{prop}
\begin{proof}
i) follows from $sp(k(N))=\overline{\{\nu_{n'+m-n,n'}:n'\in\N\}}\subset\overline{\{\nu_{m,n}:m,n\in\N\}}\\= sp(\ln\Delta_0)$; 
ii) follows from i), the assumption on $\varepsilon$ and the Spectral Theorem; iii) by the CCR we have
\begin{equation}
NA=A(N-I),\qquad A^* N=(N-I)A^*
\end{equation}
as identities among closed operators on their common domain $D(N^{3/2})$. By induction
\[
(A^*)^\ell A^\ell=N(N-I)\cdots (N-(\ell-1)\cdot I), \qquad A^\ell (A^*)^\ell=(N+I)\cdots (N+\ell\cdot I)
\]
on the domain $D(N^\ell)$ so that, by (6.2), one gets the first relation (6.3)
\[
XX^*=p(N)A^\ell (A^*)^\ell p(N)=(N+I)\cdots (N+\ell\cdot I)p(N).
\]
Since, by (6.4), $p(N)A=Ap(N-I)$, by induction one obtains the second relation (6.3) $X^*X=(A^*)^\ell p(N)A^\ell=(A^*)^\ell A^\ell p(N-\ell\cdot I)=N(N-I)\cdots (N-(\ell-1)\cdot I)p(N-\ell\cdot I)$;
the last relation (6.3) follows by difference; iv) follows from (6.3) and the fact that $N(N-I)\cdots (N-(\ell-1)\cdot I)$ and $(N+I)\cdots (N+\ell\cdot I)$ are polynomials; v) in this case $p(N)-p(N-\ell\cdot I)$ has finite rank and $(N+1)\cdots (N+\ell\cdot I)-N(N-I)\cdots (N-(\ell -1)\cdot I)$ is polynomial of degree at most $\ell-1$; vi) since $sp(\ln\Delta_0)\cap [\nu-\varepsilon,\nu+\varepsilon]=\{\nu\}$ and ${\hat f}(\nu)=1$, by the Spectral Theorem, the spectral projection $P$ of $\ln\Delta_0$, corresponding to $\{\nu\}$, can be represented as
\[
P={\hat f}(\ln\Delta_0)=\int_\R dt f(t)e^{it\ln\Delta_0}=\int_\R dt f(t)\Delta_0^{it};
\]
since $\xi_0\in D(A^\ell)=D(N^{\ell/2})$ by (6.1), by (6.4) we have
\[
\begin{split}
\Delta_0^{it}(A^\ell(\xi_0))&=\rho_\beta^{it}\circ(A^\ell(\xi_0))\circ\rho_\beta^{-it}=\rho_\beta^{it} A^\ell \rho_\beta^{1/2}\rho_\beta^{-it}\\
&=e^{-it\beta g(N)}e^{it\beta g(N+\ell\cdot I)}A^\ell\rho_\beta^{1/2}=e^{itk(N)}(A^\ell (\xi_0)).
\end{split}
\]
Hence, $P(A^\ell\xi_0)=\int_\R dt f(t) e^{itk(N)}(A^\ell (\xi_0))=({\hat f}(k(N))A^\ell)(\xi_0)=X(\xi_0)=:\xi$ does not vanish and it is an eigenvector of $\ln\Delta_0$ corresponding to the eigenvalue $\nu$.
\end{proof}

\begin{ex}
1) If $g(t)=t$ for any $t\in\R$, $B=\N$, $p(N)=I$, $X=A^\ell$ and we reproduce the "unperturbed" case treated in Theorem 5.2.\\
2) If $g(t):=t+[t/2]$ for $t\ge 0$, $n\in 2\N$ is even and $m\in 1+2\N$ is odd, then $\ell\in 1+2\N$ is odd, $g(m)-g(n)=3\ell/2-1/2$ and $B=\{n'\in\N:g(m)-g(n)=g(n'+\ell)-g(n')\}=2\N$.\\
\end{ex}


\begin{rem}
The canonical commutation relations CCR arise in the spectral analysis of
the quantum harmonic oscillator, which can be considered the canonical quantization of the classical harmonic oscillator whose phase space is the plane $\R^2$. D. Shale and W. F. Stinespring constructed in \cite{ss} a quantum system which can be regarded as the quantization of a harmonic oscillator whose phase space is the hyperbolic plane $\mathbb{H}^2$ with a fixed
negative constant curvature $k<0$. It can be also considered as a quantum
harmonic oscillator with self-interaction, the coupling constant being proportional to the curvature. In their work the authors found that the dynamics is generated by an Hamiltonian $H=\hbar\o N$ proportional to the Number Operator and that annihilation and creation operators are replaced
by operators $X$ and $X^*$ satisfying a deformed CCR
\[
[X,X^*]=\hbar\cdot I-k\hbar^2\cdot N.
\]
A similar commutation relation is satisfied by $X:=A^2$ where $A$ is the annihilation operator
\[
\begin{split}
[X,X^*]&=[A^2,(A^*)^2]=A[A,(A^*)^2]+[A,(A^*)^2]A\\
&=A[A,A^*]A^*+AA^*[A,A^*]+[A,A^*]A^* A+A^*[A,A^*]A\\
&=AA^*+AA^*+A^* A+A^*A=2I+4N.
\end{split}
\]
In reference to Section 5, $e^{-tH_{X_2}^{\lambda_2}}$, compared with the quantum Ornstein-Uhlenbeck  semigroup $e^{-tH_{X_1}^{\lambda_1}}$ (see \cite{cfl}), could be called {\it quantum Ornstein-Uhlenbeck hyperbolic semigroup}.
\end{rem}

\section{Appendix}
\subsection{Generators of a class of positivity preserving semigroups}
Let $(A,D(A))$ be a lower bounded, self-adjoint operator affiliated to a von Neumann algebra $M$ and consider the $C_0$-continuous, self-adjoint, {\it positivity preserving} semigroup on $L^2(M)$, defined by
\[
T^A_t:=e^{-tA}j(e^{-tA})=e^{-tA}Je^{-tA}J\qquad t\ge 0.
\]
If $(q_A,D(q_A))$ is the lower bounded, closed quadratic form of $(A,D(A))$, then the lower bounded, closed quadratic form of $(j(A)),JD(A))$ is given by $JD(q_A)\ni\eta\mapsto q_A[J\eta]$ and the quadratic form $(t_A,D(q_A)\cap JD(q_A))$ given by $t_A[\eta]:=q_A[\eta]+q_A[J\eta]$ is lower bounded and closed as a sum of forms sharing these same properties.
\begin{lem}
The lower bounded, closed, quadratic form of the $C_0$-continuous, self-adjoint semigroup $\{T^A_t:t\ge 0\}$ is given by $(t_A,D(q_A)\cap JD(q_A))$ and the associated self-adjoint generator, 
i.e. the generalized sum $A\dot{+}j(A)$ (see \cite{Kato}), is given by the closure $\overline{A+j(A)}$
\[
T^A_t=e^{-tA}j(e^{-tA})=e^{-t(A\dot{+}j(A))}=e^{-t\overline{A+j(A)}}\qquad t\ge 0.
\]
\end{lem}
\begin{proof}
Since $(q_A,D(q_A))$ is lower bounded, for $\eta\in L^2(M)$ the limit
\[
\lim_{t\to 0^+}t^{-1}[(\eta|(I-T^A_t)\eta)]=\lim_{t\to 0^+}t^{-1}[(\eta|(I-e^{-tA}\eta))+(e^{-tA}\eta|J(I-e^{-tA}J\eta))]
\]
exists in $\R$ if and only if both limits on the right-hand side exist in $\R$, i.e. if and only if $\eta\in D(q_A)\cap JD(q_A)$ and in this case $\lim_{t\to 0^+}t^{-1}[(\eta|(I-T^A_t)\eta)]=q_A[\eta]+q_A[J\eta]=:t_A[\eta]$. Hence the lower bounded, closed quadratic form of $\{T^A_t:t\ge 0\}$ is $(t_A,D(q_A)\cap J D(q_A))$ and this form is densely defined. As $(A,D(A))$ and $(j(A),JD(A))$ are affiliated to commuting von Neumann algebras, they strongly commute and the sum $(A+J(A),D(A)\cap J D(A))$ is densely defined, lower bounded, symmetric and essentially self-adjoint so that $A\dot{+}j(A)=\overline{A+j(A)}$.
\end{proof}

\subsection{Superbounded semigroups on abelian atomic von Neumann algebras.}
The von Neumann algebra $B(h)$ is atomic and this suggests to have a look
at the superboundedness property in the abelian situation of atomic measured spaces.
\vskip0.2truecm\noindent
Let $(X,m)$ be a locally compact, second countable, Hausdorff space, endowed with a fully supported Borel measure. Consider a real valued function $U$ such that $e^{-U}\in L^1 (X,m)$ and define a probability measure by
\[
m_U:=e^{-U}m\Big/\int_X e^{- U}dm.
\]
By the unit norm function $u_0:=e^{- U/2}/\|e^{-U/2}\|_{L^2(X,m)}\in L^2(X,m)$, one recovers the integral with respect to $m_U$ by
\[
\int_Xvdm_U=(u_0|vu_0)_{L^2(X,m)},
\]
one has the embedding $i_0:L^\infty (X,m)\to L^2(X,m)$ $i_0(v):=vu_0$ with $\|i_0(w)\|_{L^2(X,m)}=\|w\|_{L^2(X,m_U)}$.
\vskip0.2truecm\noindent
A $C_0$-continuous semigroup $T_t:L^2(X,m)\to L^2(X,m)$ is Markovian with respect to $m_U$ (in the sense we are discussing in this work, i.e. the one introduced in \cite{c1}), if
\[
0\le v\le u_0\quad\Rightarrow\quad 0\le T_tv\le u_0\qquad t\ge 0.
\]
Such a semigroup induces a semigroup on the abelian von Neumann algebra $L^\infty(X,m)$ by
\[
S_t:L^\infty(X,m)\to L^\infty(X,m)\qquad i_0(S_t u)=T_t(i_0(u))\qquad u\in L^\infty(X,m),
\]
which is Markovian in the usual sense
\[
0\le u\le 1\quad\Rightarrow\quad 0\le S_tu\le 1\qquad t\ge 0.
\]
The definition of superboundedness considered above on von Neumann algebras, in the commutative setting reduces to say that $T_t$ is {\it superbounded with respect to} $m_U$ if
\[
T_t(L^2(X,m))\subset i_0(L^\infty(X,m))\qquad t>t_0
\]
for some $t_0\ge 0$ and
\[
\|u\|_{L^\infty(X,m)}\le \|v\|_{L^2(X,m)}
\]
whenever $T_t v=i_0(u)$ for $v\in L^2(X,m)$, $u\in L^\infty(X,m)$ and $t>t_0$. In other words, $T_t$ is superbounded with respect to $m_U$, if the induced Markovian semigroup satisfies
\[
\|S_t u\|_{L^\infty(X,m)}\le \|i_0(u)\|_{L^2(X,m)}=\|u\|_{L^2(X,m_U)}\qquad u\in L^\infty(X,m),\qquad t>t_0.
\]
In case $(X,m)$ is an atomic measured space, the classical definition of super or ultracontractivity typically trivializes (see \cite{d} Section 2.1): this happens, for example, if $m$ is the counting measure because of the contractive embedding $L^2(X,m)\subseteq L^\infty(X,m)$. Superboundedness
however may still be non trivial.
\vskip0.2truecm\noindent
Let $(X,m)$ be a countable, atomic measured space and let $m=e^{-h}m_0$
for some function $h$ and the counting measure $m_0$.  To simplify notations, we assume that $\|e^{-U}\|_{L^1(X,m)}=1$.
\\
For a fixed nonnegative measurable function $V:X\to [0,+\infty)$ let us consider the semigroup
\[
T_t:L^2(X,m)\to L^2(X,m)\qquad T_t v:=e^{-tV}v\qquad t\ge 0
\]
which is clearly Markovian with respect to the probability measure $m_U$.
\begin{lem}
The semigroup $T_t$ is supercontractive with respect to $m_U$ if and only
if
\[
(U+h)_+/V\in L^\infty(X,m).
\]
More precisely, $T_t$ extends to a contraction from $L^2(X,m_U)$ to $L^2(X,m)$ if and only if
\[
t\ge t_0:=\frac{1}{2}\|(U+h)_+/V\|_\infty.
\]
In case $m$ is the counting measure we have $t_0=\|U/V\|_\infty/2$.
\end{lem}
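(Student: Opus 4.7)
The plan is to unwrap the superboundedness definition on this atomic commutative space to a pointwise inequality, atom by atom. First, I would identify the induced semigroup $S_t$ on $L^\infty(X,m)$: from $i_0(S_t u)=T_t\,i_0(u)=T_t(u u_0)=e^{-tV}u u_0=i_0(e^{-tV}u)$ one reads off $S_t u=e^{-tV}u$. Superboundedness of $T_t$ with respect to $m_U$ is therefore the statement
\[
\|e^{-tV}u\|_{L^\infty(X,m)}\le \|u\|_{L^2(X,m_U)}\qquad \forall\,u\in L^\infty(X,m),
\]
i.e., that the multiplication operator $M_{e^{-tV}}$ is a contraction from $L^2(X,m_U)$ into $L^\infty(X,m)$.

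Next, I would exploit the atomic structure to compute this operator norm explicitly. Using $m(\{x\})=e^{-h(x)}$ and hence $m_U(\{x\})=e^{-(U(x)+h(x))}$, a direct Cauchy--Schwarz/Riesz computation gives that point evaluation $u\mapsto u(x_0)$ on $L^2(X,m_U)$ has norm $m_U(\{x_0\})^{-1/2}=e^{(U(x_0)+h(x_0))/2}$, with the extremizer being the atomic indicator $\mathbf{1}_{\{x_0\}}$. Consequently
\[
\|M_{e^{-tV}}\|_{L^2(m_U)\to L^\infty(m)}=\sup_{x\in X}e^{-tV(x)}m_U(\{x\})^{-1/2}=\sup_{x\in X}\exp\bigl(-tV(x)+(U(x)+h(x))/2\bigr).
\]

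The contraction condition is then that this supremum be $\le 1$, i.e.\ $tV(x)\ge(U(x)+h(x))/2$ at every atom. Whenever $(U+h)(x)\le 0$ the condition is vacuous; wherever $(U+h)(x)>0$ it rearranges to $t\ge (U(x)+h(x))/(2V(x))$. Taking the supremum over atoms yields precisely the threshold $t_0=\tfrac12\|(U+h)_+/V\|_\infty$, and the qualitative statement of supercontractivity (existence of some finite $t_0$) reduces to $(U+h)_+/V\in L^\infty(X,m)$. The counting-measure specialization follows by setting $h\equiv 0$.

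The only delicate bookkeeping point is the appearance of the positive part: one must note that atoms at which $U+h$ is negative place no constraint on $t$ because there the required Cauchy--Schwarz exponential weight $e^{(U+h)/2}<1$ is already absorbed by the factor $e^{-tV}\le 1$. Apart from this observation, the argument is essentially computational, since in this commutative atomic setting the semigroup, the symmetric embedding, and the $L^2$/$L^\infty$ norms all diagonalize simultaneously over atoms.
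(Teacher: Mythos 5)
Your proposal is correct and follows essentially the same route as the paper: both reduce to the pointwise inequality $tV(x)\ge (U(x)+h(x))/2$ on atoms, prove sufficiency by the same Cauchy--Schwarz argument (the paper chains $\|\cdot\|_\infty \le \|\cdot\|_{L^2(m_0)}$ with the weight change $m_0 \leftrightarrow m_U$, which is the point-evaluation estimate you write out), and prove necessity by testing on the atomic indicators $1_{\{x\}}$. The only cosmetic difference is that you package both directions as a single computation of the exact operator norm $\sup_x e^{-tV(x)}m_U(\{x\})^{-1/2}$; the content is identical. Minor side note: the target space in the stated lemma should read $L^\infty(X,m)$ rather than $L^2(X,m)$, which is what both you and the paper's proof actually establish.
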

\begin{proof}
On one hand, if $t_0$ is finite and $t\ge t_0$, the result follows from $\|S_t v\|_\infty = \|ve^{-tV}\|_\infty\le \|ve^{-tV}\|_{L^2(X,m_0)}= \|ve^{(U+h)/2-tV}\|_{L^2(X,m_U)}\le \|v\|_{L^2(X,m_U)}$.
On the other hand, if $\|S_tv\|_\infty\le \|v\|_{L^2(X,m_U)}$ for some $t_0\ge 0$ and all $t\ge t_0$, choosing $v:=1_{\{x\}}$ for any $x\in X$, we have $t_0\ge \|(U+h)_+/V\|_\infty/2$.
\end{proof}

\subsection*{Acknowledgement} The authors warmly thank the anonymous referees for their careful and scrupulous reading and suggestions.
\normalsize

\normalsize

\end{document}